\newcommand{\shrinkmargins}[1]{
  \addtolength{\textheight}{#1\topmargin}
  \addtolength{\textheight}{#1\topmargin}
  \addtolength{\textwidth}{#1\oddsidemargin}
  \addtolength{\textwidth}{#1\evensidemargin}
  \addtolength{\topmargin}{-#1\topmargin}
  \addtolength{\oddsidemargin}{-#1\oddsidemargin}
  \addtolength{\evensidemargin}{-#1\evensidemargin}
  }
\DeclareMathOperator{\Diff}{Diff}
\DeclareMathOperator{\Homeo}{Homeo}
\DeclareMathOperator{\Hom}{Hom}
\DeclareMathOperator{\Map}{Map}
\DeclareMathOperator{\SL}{SL}
\DeclareMathOperator{\GL}{GL}
\DeclareMathOperator{\gl}{gl}
\DeclareMathOperator{\ch}{ch}
\DeclareMathOperator{\join}{join}
\DeclareMathOperator{\Sp}{Sp}
\DeclareMathOperator{\Mod}{Mod}
\DeclareMathOperator{\Th}{Th}
\DeclareMathOperator{\U}{U}
\DeclareMathOperator{\SU}{SU}
\DeclareMathOperator{\spinc}{spin_{\C}}
\newcommand{\field}[1]{\mathbb{#1}}
\newcommand{\Q}{\field{Q}}
\newcommand{\Z}{\field{Z}}
\newcommand{\bZ}{\field{Z}}
\newcommand{\A}{\field{A}}
\newcommand{\ka}{\mathfrak{a}}
\newcommand{\KA}{\mathfrak{A}}
\newcommand{\KS}{\mathfrak{S}}
\newcommand{\C}{\field{C}}
\newcommand{\Ehat}{\hat{E}}
\newcommand{\ehat}{\hat{e}}
\newcommand{\deltahat}{\hat{\delta}}
\newcommand{\Fhat}{\hat{F}}
\newcommand{\xhat}{\hat{x}}
\newcommand{\ihat}{\hat{i}}
\newcommand{\Hhat}{\hat{H}}
\newcommand{\pihat}{\hat{\pi}}
\newcommand{\phat}{\hat{p}}
\newcommand{\te}{\widetilde{e}}
\newcommand{\beq}{\begin{displaymath}}
\newcommand{\eeq}{\end{displaymath}}
\newcommand{\beqn}{\begin{equation}}
\newcommand{\eeqn}{\end{equation}}
\newcommand{\st}{\textsuperscript{st}}
\providecommand{\arr}{\to}
\providecommand{\Perf}{\mathrm{Perf}}
\providecommand{\Line}{\mathrm{Line}}
\providecommand{\sma}{\wedge}
\providecommand{\Cat}{\cC\mathrm{at}}
\providecommand{\iCat}{\Cat_{\infty}}
\providecommand{\Sp}{\mathcal{S}\mathrm{p}}
\providecommand{\St}{\mathcal{S}\mathrm{t}_{\infty}}
\providecommand{\Vect}{\mathcal{V}\mathrm{ect}}
\providecommand{\Fun}{\mathrm{Fun}}
\providecommand{\Mot}{\mathrm{Mot}}
\providecommand{\ex}{\mathrm{ex}}
\newcommand{\cA}{\mathcal{A}}\newcommand{\cB}{\mathcal{B}}\newcommand{\cC}{\mathcal{C}}\newcommand{\cU}{\mathcal{U}}\newcommand{\cV}{\mathcal{V}}
\theoremstyle{plain}
\newtheorem{introthm}{Theorem}
\theoremstyle{plain}
\newtheorem{thm}[equation]{Theorem}
\newtheorem{prop}[equation]{Proposition}
\newtheorem{cor}[equation]{Corollary}
\newtheorem{lem}[equation]{Lemma}
\theoremstyle{definition}
\newtheorem{defn}[equation]{Definition}
\newtheorem{exmp}[equation]{Example}
\newtheorem{question}[equation]{Question}
\newtheorem{rem}[equation]{Remark}
\theoremstyle{remark}
\title{Twisted iterated algebraic $K$-theory and topological T-duality for sphere bundles}
\author{John A. Lind, Hisham Sati, and Craig Westerland}
\begin{document}
\bibliographystyle{amsalpha}

\maketitle
\begin{abstract} We introduce a periodic form of the iterated algebraic K-theory of $ku$, the (connective) complex K-theory spectrum, as well as a natural twisting of this cohomology theory by higher gerbes.  Furthermore, we prove a form of topological T-duality for sphere bundles oriented with respect to this theory.\end{abstract}

\tableofcontents

\vspace{1cm}
Let $ku$ be the connective complex K-theory spectrum.  The underlying infinite loop space 
$\Omega^\infty ku = \Z \times BU$ of $ku$ classifies virtual complex vector bundles.  The cohomology theory associated to the algebraic K-theory spectrum $K(ku)$, the subject of much recent research in homotopy theory \cite{AR1}, has a geometric interpretation as a Grothendieck group of 2-vector bundles \cite{baas-dundas-rognes, baas-dundas-richter-rognes, Lind}.
A 2-vector bundle is a bundle whose fiber is a 2-vector space, which is a categorified form of a vector space introduced by Kapranov-Voevodsky \cite{KV}. 
Forming equivalence classes of 2-vector bundles over $X$ leads to a bimonoidal category
$2{\rm Vect}(X)$.  By \cite{baas-dundas-richter-rognes}, the Grothendieck group completion of $2{\rm Vect}(X)$ is represented by the infinite loop space $\Omega^{\infty}K(ku)$ underlying the algebraic $K$-theory of $ku$.

\medskip
Applying the functor $K(-)$ again, 
one is naturally led to imagine that the iterated algebraic $K$-theory spectrum 
$$
\ka_n := K^{(n-1)}(ku)=\underbrace{K(K( \cdots K}_{n-1}(ku)\cdots ))
$$ 
has an interpretation in terms of categorified bundles native to $n$-category theory.  It is expected that algebraic K-theory in many cases increases chromatic complexity by one,
i.e., that it produces a constant ``red-shift'' by one chromatic layer in stable homotopy theory \cite{AR1}.  In this paper, we study a Bott-periodic form $\KA_n := K^{(n-1)}(ku)[\beta_{n}^{-1}]$ of iterated algebraic $K$-theory.  While our results do not provide direct evidence either for or against the Ausoni-Rognes red-shift conjectures \cite{AR2}, our interest in the relationship between the geometric content of iterated algebraic $K$-theory and chromatic homotopy theory is a primary motivation for the study of T-duality in $K^{(n-1)}(ku)[\beta_{n}^{-1}]$-theory.


\medskip

Much as line bundles are the fundamental building blocks of vector bundles, and hence play an essential role in the K-theory of vector bundles, $(n-1)$-gerbes are the simplest forms of $n$-vector bundles.  
For a general definition of $n$-gerbes as $n$-truncated and $n$-connected objects, see  \cite[Sec. 7.2.2]{LurieHTT}.
For example, when $n = 2$, a 1-gerbe (also known as a gerbe with band $U(1)$) gives rise to a rank one 2-vector bundle.  This fact is witnessed at the level of classifying spaces by a map
\[
K(\Z, 3) \to B \GL_1 (ku) \to \Omega^{\infty}K(ku).
\]
This is the 2-categorical analog of the map $\C P^\infty \to \Z \times BU$ representing the inclusion of line bundles into the Grothendieck group of vector bundles.  The adjoint map $\Sigma^\infty \C P^\infty_+ \to ku$ is a map of $E_\infty$ ring spectra.  We study here a family of analogous $E_\infty$ ring maps
\beqn \Sigma^\infty K(\Z, n+1)_+ \to \ka_n:= K^{(n-1)}(ku) \label{equation_1} \eeqn
which we think of as representing the inclusion of $(n-1)$-gerbes into the Grothendieck group of $n$-vector bundles.  

\medskip
These maps are adjoint (under the adjunction described in \cite{ABGHR, May}) to maps of spectra $\Sigma^{n+1} H \Z \to \gl_1(\ka_n)$ or, equivalently, maps of $E_{\infty}$ spaces $K(\Z, n+1) \to \GL_1(\ka_n)$.  Delooping once, we obtain $E_\infty$-twistings of the cohomology theory $\ka_n$ by $(n+2)$-dimensional cohomology\footnote{A caveat is in order: the twisted cohomology group $\ka_n^*(X; H)$ depends upon the representative map $H: X \to K(\Z, n+2)$.  Just as is the case for twisted K-theory, homotopic maps (i.e., cohomologous classes) yield isomorphic twisted cohomology groups; however, the isomorphism is not canonical.} classes: each class $H \in H^{n+2}(X; \Z)$ in the cohomology of a topological space $X$ gives rise to twisted cohomology groups $\ka_n^*(X; H)$.  When $n=1$, this returns the usual notion of connective complex K-theory twisted by a gerbe (or rather, a representative of its Dixmier-Douady class in $H^3$). The new twists that we study have the same degree as the twists of Morava K-theory and E-theory studied previously \cite{SW}; we hope that further understanding of the redshift conjecture will relate the results of that paper and this one.

\medskip
Notice that the element $\beta \in \pi_2 ku$, given as the composite $S^2 \to \C P^\infty \to \Z \times BU$ of the fundamental class of $\C P^\infty = K(\Z, 2)$ and its inclusion as $BU(1) \times \{1\}$, is the Bott class.  Localizing $ku$ at this class yields periodic complex $K$-theory: $KU := ku[\beta^{-1}]$.  Our first goal is an analogue of this construction for $\ka_n$:

\begin{introthm} \label{thm:non-nilpotent}

The composite of the fundamental class of $K(\Z, n + 1)$ with the map in Equation \eqref{equation_1} yields an essential class $\beta_n \in \pi_{n+1} \ka_n$.  Further, when $n$ is odd, $\beta_n$ is non-nilpotent, and none of its powers are torsion.

\end{introthm}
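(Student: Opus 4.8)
The plan is to reduce the statement to a rational computation and then argue by induction on $n$. Since $\ka_n$ is an $E_\infty$-ring, $\pi_*\ka_n\to\pi_*\ka_n\otimes\Q$ is a ring homomorphism, so it suffices to show that the image $\bar\beta_n$ of $\beta_n$ is nonzero for all $n$ (whence $\beta_n$ is essential and non-torsion) and is non-nilpotent for odd $n$ (whence no power of $\beta_n$ is torsion). As \eqref{equation_1} is a map of $E_\infty$-rings and the fundamental class generates $\pi_{n+1}K(\Z,n+1)\cong\Z$, the class $\bar\beta_n$ is the image of the fundamental class under the induced map of graded-commutative $\Q$-algebras $H_*(K(\Z,n+1);\Q)=\pi_*\Sigma^\infty K(\Z,n+1)_+\otimes\Q\to\pi_*\ka_n\otimes\Q$. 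For odd $n$ this source is the polynomial ring $\Q[\iota]$, $|\iota|=n+1$ --- the rational Pontryagin ring of the even Eilenberg--MacLane space $K(\Z,n+1)$, which over $\Q$ is the polynomial (equivalently divided-power) algebra on the fundamental class --- and $\bar\beta_n^{\,k}$ is the image of $\iota^k\ne 0$. Thus the theorem reduces to showing that this $\Q$-algebra map is nonzero, and is injective for odd $n$.

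I would prove this by induction on $n$, using the factorization $K(\Z,n+1)\to B\GL_1(\ka_{n-1})\to\Omega^\infty K(\ka_{n-1})=\Omega^\infty\ka_n$ defining \eqref{equation_1}: on homotopy it exhibits $\beta_n$ as the image of $\beta_{n-1}\in\pi_n\ka_{n-1}\cong\pi_{n+1}B\GL_1(\ka_{n-1})$ under the unit inclusion of algebraic $K$-theory. Composing further with the Dennis (B\"okstedt) trace $K(\ka_{n-1})\to\mathrm{THH}(\ka_{n-1})$: the composite $B\GL_1(R)\to\Omega^\infty\mathrm{THH}(R)$ realizes, on higher homotopy groups, the circle-action operator $\sigma\colon\pi_*R\to\pi_{*+1}\mathrm{THH}(R)$, so $\beta_n$ maps to $\sigma(\beta_{n-1})$, which rationally lives in $\mathrm{THH}(\ka_{n-1})\otimes\Q=\mathrm{HH}((\ka_{n-1})_\Q/\Q)$ and is the de Rham differential $d\beta_{n-1}$. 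Since the Dennis trace is a map of $E_\infty$-rings, $\beta_n^{\,k}$ maps to $(d\beta_{n-1})^k$ in this rational ring; so for odd $n$ it suffices to show that power is nonzero. The base case is $n=1$: $\ka_1=ku$, $\beta_1$ is the Bott class, and $\pi_* ku\otimes\Q=\Q[\beta]$.

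The inductive hypothesis I would carry is that $(\ka_{n-1})_\Q$ is formal and belongs to a class of connective $E_\infty$-$\Q$-algebras for which the Hochschild--Kostant--Rosenberg map is an equivalence --- its rational homotopy being a polynomial--exterior $\Q$-algebra modulo the contribution of the Borel classes of $K(\Z)$ accumulated at earlier stages --- in which $\beta_{n-1}$ is a polynomial generator if $n-1$ is odd and an exterior generator if $n-1$ is even. Granting this: for odd $n$ the generator $\beta_{n-1}$ has odd degree, so $d\beta_{n-1}$ has even degree and, by HKR, $\mathrm{HH}_*((\ka_{n-1})_\Q/\Q)\cong\Omega^*_{(\ka_{n-1})_\Q/\Q}$, in which the powers of $d\beta_{n-1}$ span a polynomial (divided-power, hence over $\Q$ honestly polynomial) subalgebra; thus $(d\beta_{n-1})^k\ne 0$ for all $k$, so $\beta_n^{\,k}\ne 0$ and is non-torsion. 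For even $n$ the symmetric computation makes $d\beta_{n-1}$ an odd-degree exterior generator, so $\beta_n\ne 0$ (essentialness) with $\beta_n^{\,2}=0$. To restart the induction one identifies $\pi_*\ka_n\otimes\Q=\pi_*K(\ka_{n-1})\otimes\Q$ using Goodwillie's theorem --- the relative rational $K$-theory of $\ka_{n-1}$ over $H\Z=H\pi_0\ka_{n-1}$ is the shifted reduced rational cyclic homology of $(\ka_{n-1})_\Q$, applied along the Postnikov tower of square-zero extensions $\ka_{n-1}=\lim_m\tau_{\le m}\ka_{n-1}$ together with continuity of $K$-theory --- and then verifies that $\beta_n=d\beta_{n-1}$ persists as a polynomial (resp.\ exterior) generator and that $(\ka_n)_\Q$ remains in the same class of rings.

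I expect the main obstacle to be exactly this last point: propagating a sufficiently precise structural description of $\pi_*\ka_n\otimes\Q$ through the cyclic-homology computation --- in particular controlling how $d\beta_{n-1}$ and its powers interact with the Borel classes from $K(\Z)$ and with the de Rham contributions --- so that the inductive hypothesis is restored. A secondary technical point is the clean identification, rationally and after Goodwillie's equivalence, of the image of $\beta_{n-1}$ under $B\GL_1(\ka_{n-1})\to\Omega^\infty\ka_n$ with $d\beta_{n-1}$, which rests on naturality of the trace and of Goodwillie's theorem and on the fact that the $d\log$-type map $\GL_1(R)\to\Omega^\infty\Sigma^{-1}\overline{\mathrm{HC}}(R_\Q/\Q)$ carries a positive-degree homotopy class $x$ to $dx$ modulo higher filtration. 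The rational reduction of the first paragraph and the HKR bookkeeping are comparatively routine.
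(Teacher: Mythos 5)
Your opening reduction---pass to rational homotopy, identify $\pi_*\Sigma^\infty K(\Z,n+1)_+\otimes\Q$ with the polynomial ring $\Q[\iota]$ for odd $n$, and reduce to injectivity of the ring map $\Q[\iota]\to\pi_*\ka_n\otimes\Q$ induced by the adjoint $t_n$ of the twisting---is exactly the paper's starting point. But the detection step has a genuine gap, and it is the one you flag yourself: your one-ring-at-a-time induction requires, at each stage, enough structural control of $\pi_*\ka_{n-1}\otimes\Q$ (formality, an HKR presentation, $\beta_{n-1}$ a free polynomial or exterior generator) to conclude that $(d\beta_{n-1})^k\neq 0$ in $THH(\ka_{n-1})_\Q$. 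No such control is available: the paper explicitly notes that even $\pi_*K(ku)\otimes\Q$ is a complicated and essentially unknown ring (it absorbs, for instance, the Borel classes coming from $K(\Z)$), so the inductive hypothesis cannot be restored after even one step, and nothing in your argument rules out $\sigma(\beta_{n-1})$ satisfying relations with the other classes in Hochschild homology. Detecting in $THH(\ka_{n-1})$ alone is therefore not sufficient as written.

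The paper's proof sidesteps this entirely: it composes all $n-1$ Dennis traces at once, landing in $THH^{(n-1)}(ku)$, and then invokes Segal's rational equivalence $ku_\Q\simeq(\Sigma^\infty\C P^\infty_+)_\Q$ to replace the base ring by a suspension spectrum \emph{before} computing any iterated $THH$. For suspension spectra of iterated loop spaces one has $THH^{(m)}(\Sigma^\infty G_+)\simeq\Sigma^\infty\Map(T^m,B^mG)_+$, and the constant maps split $\Sigma^\infty K(\Z,n+1)_+$ off of $THH^{(n-1)}(\Sigma^\infty\C P^\infty_+)$ as a wedge summand. Waldhausen's splitting of $Q(BG_+)$ off of $A(BG)$, applied iteratively, identifies the composite of $t_n$ with the iterated trace and this projection as the rationalization map of $\Sigma^\infty K(\Z,n+1)_+$, after which injectivity on the powers of $\iota$ is immediate from $H_*(K(\Z,n+1);\Q)=\Q[\iota]$. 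So the missing idea is precisely to avoid ever computing $THH$ of the intermediate rings $\ka_m$, pushing the detection all the way down to $ku$ where the Segal equivalence makes the answer a mapping space. (Separately, the essentialness and non-torsion of $\beta_n$ itself for all $n$, including even $n$, is obtained in the paper from the Ausoni--Rognes rational determinant, Proposition \ref{iter_prop}, which does not require any of this machinery.)
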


This is proven as Corollary \ref{non_tor_cor} and Theorem \ref{non_nilp_thm}, below.  For odd $n$, we will notate the localized spectrum $\ka_n[\beta_n^{-1}]$, i.e. the Bott-inverted iterated algebraic K-theory, by $\KA_n$. Theorem \ref{thm:non-nilpotent} suggests that this is a non-trivial object of study.  However, it is a consequence of a theorem of Arthan \cite{arthan} that $\KA_n$ is a \emph{rational} spectrum whenever $n>1$.  We may regard it as a higher categorical analogue of periodic topological K-theory, or perhaps an \'{e}tale form of iterated K-theory. 

\medskip
The twisting of $\ka_n$ by cohomology classes in degree $n+2$ extends in a natural way to a twisting of $\KA_n$ via the composite
$$\Sigma^{n+1} H \Z \to \gl_1 \ka_n \to \gl_1 \KA_n,$$
where the second map is induced by the $E_\infty$ ring map given by localization $\ka_n \to \KA_n$.  Our second main purpose in this paper is to prove an analogue of the topological T-duality isomorphism of \cite{BEM, BS} in this context.

\medskip
Given a base space $X$, Bouwknegt, Evslin and Mathai give a criterion for two sets of data $(E, H)$ and $(\Ehat, \Hhat)$ to 
be \emph{T-dual} \cite{BEM}.  Here $E$ and $\Ehat$ are principal $S^1$-bundles over $X$, and $H$ and $\Hhat$ are cocycles representing cohomology classes 
in $H^3(E)$ and $H^3(\Ehat)$, respectively.  The criterion in \cite{BEM} is given in terms of relations among various characteristic classes.  In \cite{BS}, Bunke-Schick reinterpret this criterion as amounting to the existence of a Thom class on a certain $S^3$-bundle over $X$ into which both $E$ and $\Ehat$ embed.  When $(E, H)$ and $(\Ehat, \Hhat)$ are a T-dual pair, there is an isomorphism of twisted $K$-theory groups $K^*(E; H) \cong K^{* - 1}(\Ehat, \Hhat)$ called the T-duality isomorphism.

\medskip
Let $E$ and $\Ehat$ be fiber bundles over $X$ with fiber the $q$-sphere $S^q$, and let $H$ and $\Hhat$ be classes in $H^{2q+1}(E)$ and $H^{2q+1}(\Ehat)$, respectively.  In Definition \ref{t-dual_defn}, we will give a Thom class criterion for $(E, H)$ and $(\Ehat, \Hhat)$ to be T-dual in the higher (dimensional and categorical) context, similar to Bunke-Schick's.  We then prove the following extension of the T-duality isomorphism in this setting:

\begin{introthm} \label{thm_b}

Let $n=2q-1$, and assume that $(E, H)$ and $(\Ehat, \Hhat)$ are a T-dual pair.  Let $\tau_{E} \colon E \to B \GL_1 \KA_n$ denote the orientation twisting determined by the vertical tangent bundle of $E \to X$.  Then there is an isomorphism of twisted cohomology groups
$$T:= \phat_! \circ \Lambda \circ p^*: \KA_n^*(E; \tau_{E} \otimes H) \to \KA_n^{*}(\Ehat; \Hhat)$$
given in terms of a Fourier-Mukai push-pull construction on the correspondence space $E \times_X \Ehat$.  Given an $\KA$-orientation of the fiber bundle $E \to X$, the T-duality isomorphism takes the form
$$T : \KA_n^{* + q}(E; H) \to \KA_n^{*}(\Ehat; \Hhat).$$

\end{introthm}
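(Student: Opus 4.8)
The plan is to follow the Bunke–Schick strategy adapted to the sphere-bundle and higher-categorical setting. First I would set up the correspondence space: given the T-dual pair $(E,H)$ and $(\Ehat,\Hhat)$, the Thom-class criterion in Definition \ref{t-dual_defn} provides an $S^{2q-1}=S^n$-bundle $W \to X$ (analogous to Bunke–Schick's $S^3$-bundle) together with closed embeddings $E \hookrightarrow W$ and $\Ehat \hookrightarrow W$ over $X$, with complementary fiber dimensions. One then forms the correspondence space $C := E \times_X \Ehat$ with its two projections $p\colon C \to E$ and $\phat\colon C \to \Ehat$. The key geometric input is that $p$ is (fiberwise) the restriction to $E$ of a sphere bundle over $\Ehat$, or more precisely that $C$ sits inside $E \times_X W$ in such a way that $\phat$ is an oriented sphere-bundle projection admitting a Gysin/umkehr map $\phat_!$ once we have the requisite $\KA_n$-orientation. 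I would verify that the vertical tangent bundles satisfy the expected additivity relation on $C$, so that the orientation twisting $\tau_E$ pulled back along $p$ differs from the orientation data needed for $\phat_!$ by exactly the twist coming from the Thom class of the complementary bundle.

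The second step is to construct the middle map $\Lambda$. On the correspondence space the two twists $p^*(\tau_E \otimes H)$ and $\phat^*\Hhat$ are cohomologous in $H^{n+2}(C;\Z)$ — this is precisely the content of the T-duality condition, which forces the relevant degree-$(2q+1)=(n+2)$ classes on $C$ to agree up to the contribution of the Thom class of $W \to X$ restricted to $C$. A choice of cochain-level trivialization of the difference then gives an isomorphism $\Lambda\colon \KA_n^*(C; p^*(\tau_E\otimes H)) \xrightarrow{\ \sim\ } \KA_n^*(C; \phat^*\Hhat)$ of twisted cohomology groups. Since $\KA_n$ is an $E_\infty$ ring spectrum with $E_\infty$ twistings by $K(\Z,n+2)$, this is a formal consequence of the fact that twisted $\KA_n$-cohomology depends only on the homotopy class of the twisting map, exactly as in the footnote's discussion. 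Composing, one obtains
\[
T := \phat_! \circ \Lambda \circ p^* \colon \KA_n^*(E; \tau_E \otimes H) \longrightarrow \KA_n^*(\Ehat; \Hhat),
\]
where $\phat_!$ is the twisted Gysin map for the $\KA_n$-oriented sphere bundle $\phat$, landing in the correctly twisted group because $\Lambda$ has absorbed the orientation discrepancy.

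To prove $T$ is an isomorphism I would argue, as Bunke–Schick do, by a Mayer–Vietoris / Atiyah–Hirzebruch spectral sequence comparison, reducing to the case where $X$ is a point (or contractible) by naturality of all the constructions in $X$. Over a point the pair $(E,\Ehat)$ becomes $(S^n, S^0)$-type data — more precisely the two sphere bundles degenerate and the push-pull construction becomes the standard Fourier–Mukai equivalence for the trivial situation, where the twisted Gysin map is manifestly an isomorphism because the relevant Thom isomorphism holds. Since $\KA_n$ is a rational spectrum for $n > 1$ (by Arthan's theorem, as noted after Theorem A), the local computation is really a computation in rational (twisted) cohomology, which makes the spectral-sequence comparison tractable: one checks that $T$ induces an isomorphism on associated graded pieces. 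The main obstacle I anticipate is establishing the twisted Gysin map $\phat_!$ and the Thom isomorphism for $\KA_n$-oriented sphere bundles with a nontrivial twist present — one must check that the $\KA_n$-orientation (whose existence is hypothesized) is compatible with the $E_\infty$-twisting structure, i.e. that the Thom class can be chosen as a twisted class and that the projection formula $\phat_!(\Lambda(p^*x) \cdot \phat^* y) = \phat_!(\Lambda(p^*x)) \cdot y$ holds at the twisted level. Once the twisted Thom isomorphism and projection formula are in hand, the inverse to $T$ is constructed symmetrically by swapping the roles of $E$ and $\Ehat$ (using that T-duality is an involution on such pairs), and a diagram chase using the projection formula shows the two composites are the identity. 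The final reindexed form of the statement, $T\colon \KA_n^{*+q}(E;H) \to \KA_n^*(\Ehat;\Hhat)$, then follows by feeding in the chosen $\KA$-orientation of $E \to X$ to untwist $\tau_E$ and absorb the dimension shift $q = \dim S^q$.
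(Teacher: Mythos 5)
Your overall strategy—construct $\Lambda$ from the homotopy supplied by the bundle Thom class on the fiberwise join, then reduce to $X = \mathrm{pt}$ by naturality and Mayer--Vietoris and conclude by cellular induction—is exactly the paper's (Lemmas \ref{lemma 1}--\ref{lemma 3}). But there is a genuine gap in your treatment of the base case, and it sits at precisely the point where the whole theorem lives or dies. Over a point you assert the push-pull map is ``manifestly an isomorphism because the relevant Thom isomorphism holds.'' It is not. With $R^*(E\times\Ehat) = R^*[x,\xhat]/(x^2,\xhat^2)$, the umkehr map $\phat_!$ alone is division by $x$ (and zero otherwise), so $\phat_!\circ p^*$ kills $1$ and is very far from an isomorphism. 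The content of the paper's Lemma \ref{lemma 1} is that $\Lambda$ is multiplication by the class $1 \pm \beta\, x\xhat$, where $\beta = \phi_*(\iota)$ is the periodicity class, so that $T(1) = \pm\beta\xhat$ and $T(x) = 1$; this is an isomorphism \emph{if and only if} $\beta$ is invertible in $\pi_*R$. That is exactly the periodicity hypothesis (Definition \ref{period_defn}), and by Arthan's theorem it is why the theory is forced to be rational for $n>1$ (Theorem \ref{rational_thm} makes the ``only if'' direction precise). A proof of the base case that does not exhibit the $\beta x\xhat$ term and invoke the invertibility of $\beta$ has not actually used the periodic-twisting hypothesis anywhere, and so cannot be correct as stated.

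Two smaller issues. First, the fiberwise join $E *_X \Ehat$ has fiber $S^q * S^q = S^{2q+1} = S^{n+2}$, not $S^{2q-1} = S^n$ as you wrote; the bundle Thom class lives in $H^{n+2}$, matching the degree of the gerbe twists. Second, your proposed construction of an inverse ``by symmetry, swapping $E$ and $\Ehat$'' is not how the paper proceeds and would require showing the two composites are the identity rather than $\pm\beta$ times a shift—again running into the invertibility of $\beta$. The paper avoids this by verifying the point case directly and inducting; if you keep the symmetric-inverse route you must still carry out the local computation above to identify the composite.
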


This is proven as Theorem \ref{t_dual_thm}.  In fact, we prove the result for a larger class of cohomology theories $R$ than just $\KA_n$, namely those which may be twisted by $n$-gerbes in such a fashion that the analogue of the class $\beta_n$ is invertible in $\pi_* R$.  One consequence of this invertibility assumption (again via \cite{arthan}) is that $R$ is a rational spectrum when $n>1$.  We will show in Theorem \ref{rational_thm} that this is no accident: every cohomology theory $R$ for which the T-duality map $T$ of the previous theorem is an isomorphism must be rational.

\medskip Additionally, we study criteria to ensure the orientability requirement of the previous result, and analyze the homotopy type of the classifying spaces for T-dual pairs, much as in \cite{BS} in the case $q=n=1$. 

\medskip
For $q=3$, T-duality for $S^3$-bundles in 
rational cohomology, and in twisted K-theory under
some conditions on the cohomology and dimensions of the underlying manifolds,
was considered recently in \cite{BEM2}.  
The authors discuss twistings of topological K-theory by a 7-dimensional class.  It is known that $B{\rm GL}_1(ku)$ splits as
$B\Z/2 \times K(\Z, 3) \times BB{\rm SU}_{\otimes}$.  Furthermore, $BB{\rm SU}_{\otimes}$ has a 7-dimensional homotopy group isomorphic to $\Z$; this is visible via the
$k$-invariant $BB{\rm SU}_{\otimes}\langle 6 \rangle \to K(\Z, 7)$ in a Postnikov tower for $BB\SU_\otimes$.
An essential map in the other direction -- which is necessary for there to be a twist of K-theory by $H^7$ -- does not exist (see \cite{AGG}); hence that degree cannot be `isolated' in general, 
as highlighted in \cite{Higher} and clarified further in \cite{SW}. 
The conditions imposed by \cite{BEM2} evade the obstructions  
for dimension reasons and
 allow one to isolate such a twist in special cases. 
If one rationalizes, as done for the most part in \cite{BEM2}, one gets periodic cohomology, which does have a degree seven
 twist.  We recover this result and analogues for all sphere bundles as a consequence of the variant of Theorem \ref{thm_b} for periodic rational cohomology.  Indeed, it is a consequence of Theorem \ref{rational_thm} that such a T-duality result \emph{must} be rational outside of the case $q=1$.

\medskip
Our general result also provides a recipe for determining whether or not there exists a T-dual for $(E, H)$, where $E$
is an $S^q$-bundle with structure group $G \to {\rm Homeo}(S^q)$ equipped with an $n$-gerbe $H$, and whether or not the T-dual is unique. 
This is determined by analyzing the connectivity of the Euler class map $BG \to K(\Z, q+1)$. 
For the case $q=1$, this recovers some of the results of Baraglia \cite{Ba14} and 
 Mathai-Rosenberg \cite{MR14} for existence and uniqueness of 
T-dual bundles for non-principal circle bundles.

\medskip
The paper is organized as follows. 
In Sec. \ref{Sec Gen},
we describe the multiplicative behavior of the algebraic $K$-theory functor and consider generalities about twisted cohomology theories. 
In Sec. \ref{sec:iterated_twistings_ku}, we construct and analyze the twisting of iterated algebraic $K$-theory $K^{(n - 1)}(ku)$ by higher gerbes.  We prolong some computations of the homotopy of $K(ku)$ due to \cite{adr} to the higher setting in section \ref{rel_sec}.
Then in Sec. \ref{Sec Snaith} we consider a higher analog of Snaith's construction of $KU$,
which in turn admits the universal periodic twist.

In order to obtain concrete expressions and with an eye for applications to 
T-duality, we study Chern characters and orientations in Sec. 
\ref{Sec Chern}. 
We introduce two Chern characters  associated to the new twisted theories,
one in Sec. \ref{Sec Dold}
and Sec. \ref{Sec higherChern}, and then we describe orientations with respect
the new theories in Sec. \ref{Sec Orient}.  

The general set-up and the proof of the main T-duality theorem and its converse are given in 
Sec. \ref{Sec T}.  Sec.  
\ref{Sec pairs} is concerned with the question of the existence and the uniqueness of T-dual pairs, which we analyze homotopy theoretically in terms of classifying spaces. In particular, in Sec.  \ref{Sec P}
we introduce a space $P_n(G)$ which classifies the possible T-dual pairs. Then in Sec. \ref{Sec R}
we classify, via a space $R_n(G)$, bundles and Thom classes that can arise in our context of 
T-duality. The two constructions are related in Sec. \ref{Sec_Compare}
by constructing a forgetful map from $P_n(G)$ to $R_n(G)$, where we also
show how our results reproduce earlier results on T-duality. 

The final section
Sec. \ref{Sec ncat} is a speculative account of the relationship between the iterated algebraic 
K-theory considered earlier in the paper and a proposed model for the algebraic $K$-theory of $n$-vector spaces defined in terms of enriched higher category theory.

\medskip
{\it Acknowledgements.}  We thank Ben Antieau, Justin Noel, and Thomas Schick for helpful comments on a previous draft and Drew Heard for pointing us to the paper of Arthan \cite{arthan}.  We also thank Rune Haugseng for sharing his perspectives on higher category theory and iterated $K$-theory with us.  JL was supported in part by the DFG through SFB1085. 
 HS thanks the Erwin Schr\"odinger Institute for Mathematical Physics, Vienna, for hospitality and the organizers of the  
 program ``Higher Structures in String Theory" for the opportunity to present the results of this project.  CW was supported in part by the NSF through DMS-1406162.

\section{Iterated algebraic K-theory and its twistings by $n$-gerbes}
\label{Sec Alg}

We recall here the multiplicative behavior of the algebraic $K$-theory functor and describe the twisting of iterated algebraic $K$-theory by higher gerbes.  

\subsection{Generalities on twistings of algebraic K-theory}
\label{Sec Gen}

Given an $E_\infty$ ring spectrum $A$, the algebraic K-theory spectrum $K(A)$ is once again an $E_\infty$ ring spectrum.  There is a natural map
$B\GL_1 A \to \Omega^\infty K(A)$
coming from the inclusion of $A$-lines into all cell $A$-modules.  It is not the case that this is an infinite loop map: the multiplication on $B \GL_1 A$ is by tensor products of $A$-lines, whereas that in $\Omega^\infty K(A)$ is from the sum of modules.  
However, this map has image in $\GL_1 K(A)$ (since $A$-lines are invertible $A$-modules), and in fact the induced map $B\GL_1 A \to \GL_1 K(A)$ is an infinite loop map.

\medskip
We explain the above in detail. Let $A$ be a connective commutative ring spectrum and let $K(A)$ denote the connective algebraic $K$-theory spectrum of $A$.  The underlying infinite loop space of the spectrum $K(A)$ receives a map
\begin{equation}\label{eq:pre_w}
B \GL_1 A \to \coprod_{n \geq 0} B \GL_n A \to \Omega^{\infty}K(A)
\end{equation}
from the classifying space of the space of units $\GL_1 A$ via the classifying space of finite rank free $A$-module spectra.
This is the analog in algebraic $K$-theory of the map $\C P^\infty \to \Z \times B \U$ into topological $K$-theory classifying the homomorphism from the Picard group into the Grothendieck group of vector bundles.  As in the classical case, the source and target both inherit $E_{\infty}$ space structures from the multiplication on $A$.

\begin{prop} \label{prop_3}
The map \eqref{eq:pre_w} has image in $\SL_1 K(A)$; the result is a map of $E_{\infty}$ spaces and so lifts to a map of spectra
\[
\mu \colon \Sigma \gl_1 A \to \gl_1 K(A).
\]
\end{prop}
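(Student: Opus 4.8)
The plan is to verify the claim in two stages: first, that the map \eqref{eq:pre_w} lands in the invertible (more precisely, rank-one) components of $\Omega^\infty K(A)$, and second, that the resulting $E_\infty$ map of grouplike spaces is identified via the $\mathrm{bar}$-construction/delooping machinery with a map of spectra $\Sigma \gl_1 A \to \gl_1 K(A)$. For the first stage, recall that $B\GL_1 A$ classifies free $A$-lines, i.e. $A$-modules of the form $\Sigma^0 A$ twisted by an automorphism of $A$; such a line is an invertible $A$-module, and localization/induction along $A \to K(A)$ (at the level of $K$-theory of the symmetric monoidal category of modules) sends invertible $A$-modules to invertible $K(A)$-modules. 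Concretely, the composite $B\GL_1 A \to \coprod_n B\GL_n A \to \Omega^\infty K(A)$ factors through the subspace of components indexed by rank $\pm 1$, and since an $A$-line has a well-defined multiplicative inverse (its $A$-linear dual), its image in $\pi_0 K(A) = K_0(A)$ is a unit. Hence the image lies in $\GL_1 K(A)$; because $B\GL_1 A$ is connected, it actually lies in the identity component, which is $\SL_1 K(A)$. This gives the first assertion.

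For the second stage, the point is that both $B\GL_1 A$ and $\SL_1 K(A) \subseteq \GL_1 K(A)$ are $E_\infty$ spaces — the former with its tensor-product multiplication of $A$-lines, the latter as the units of the ring spectrum $K(A)$ — and the map between them is a map of $E_\infty$ spaces. This is essentially functoriality of $K$-theory applied to the symmetric monoidal functor $\mathrm{Pic}(A) \hookrightarrow \mathrm{Mod}_A^{\mathrm{cell}}$: the assignment $A$-line $\mapsto$ its class in $K(A)$ respects tensor products on the nose up to coherent homotopy, because $K$-theory is lax symmetric monoidal (or: one uses that $\GL_1(-)$ and $\SL_1(-)$ are lax symmetric monoidal functors from $E_\infty$-rings to $E_\infty$-spaces). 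An $E_\infty$ (equivalently, grouplike $E_\infty$) map of grouplike $E_\infty$ spaces is the same data as a map of connective spectra after applying the equivalence between grouplike $E_\infty$ spaces and connective spectra; applying it here yields a map of connective spectra $\mathrm{bgl}_1 A \to \mathrm{gl}_1 K(A)$, where $\mathrm{bgl}_1 A$ denotes the connective spectrum whose infinite loop space is $B\GL_1 A$. But $\mathrm{bgl}_1 A$ is, by definition, the connective cover of $\Sigma \gl_1 A$, and $\gl_1 K(A)$ is already connective, so the map factors uniquely through $\Sigma \gl_1 A \to \gl_1 K(A)$ (equivalently, one simply identifies $\mathrm{bgl}_1 A = \Sigma \gl_1 A$ since $\gl_1 A$ is connective). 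This produces the desired $w$.

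The main obstacle — and the only place requiring genuine care — is establishing that the map of $E_\infty$ spaces is coherently multiplicative, not just multiplicative on $\pi_0$. Naively, $K$-theory of a symmetric monoidal category is only lax symmetric monoidal, so one should be careful that the tensor-product $E_\infty$ structure on the Picard space $B\GL_1 A$ maps compatibly into the $E_\infty$ structure on $\GL_1 K(A)$ coming from the ring multiplication on $K(A)$. The cleanest way to handle this is to invoke the fact that $\GL_1$ (and $\SL_1$) is a lax symmetric monoidal functor $E_\infty\text{-}\mathrm{Ring} \to E_\infty\text{-}\mathrm{Space}$, together with the fact that the inclusion of lines into modules is a map of $E_\infty$-ring spectra at the level of group rings / Picard spectra — i.e. the map $B\GL_1 A \to \Omega^\infty K(A)$ is induced by the unit-like map $\Sigma^\infty_+ B\GL_1 A \to K(A)$ coming from regarding a point of $B\GL_1 A$ as an invertible $A$-module; applying $\mathrm{gl}_1$ to this map of $E_\infty$-rings and precomposing with the canonical map $\Sigma \mathrm{gl}_1 A \simeq \mathrm{gl}_1 \Sigma^\infty_+ B\GL_1 A$ (the "logarithm"-type identification, or simply the counit $B\GL_1 A \to \GL_1 \Sigma^\infty_+ B\GL_1 A$) gives $w$ directly as a map of spectra with no ad hoc coherence checks. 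I expect this last route — packaging everything through maps of $E_\infty$-ring spectra and applying $\mathrm{gl}_1$ — to be the shortest and most robust, and it is likely what the authors intend given the infinite-loop-map assertion in the surrounding discussion.
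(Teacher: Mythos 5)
Your proposal is correct and follows essentially the same route as the paper: identify the map as the $K$-theoretic image of the symmetric monoidal inclusion of $A$-lines into perfect $A$-modules, use the multiplicative $E_\infty$ ring structure on $\Omega^\infty K(A)$ to see the map is $E_\infty$, note that connectedness of $B\GL_1 A$ forces the image into the unit component $\SL_1 K(A)$, and then deloop. The paper makes the coherence issue you flag precise by working in the Gepner--Groth--Nikolaus model, where $\iota_0\Perf_A$ is an $E_\infty$ ring space and $K(A)$ is its ring completion, which is exactly the ``lax symmetric monoidal'' input you invoke.
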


\begin{proof}
We employ the $\infty$-categorical model for algebraic $K$-theory developed by Gepner-Groth-Nikolaus \cite{GGN} using the language of quasicategories.  Let $\Perf_{A}$ denote the stable $\infty$-category of compact $A$-module spectra.  Let $\Line_{A}$ denote the $\infty$-category of rank one $A$-module spectra.  Both of these $\infty$-categories admit symmetric monoidal structures under the smash product $\sma_{A}$ of $A$-modules.  Writing $\iota_0$ for the groupoid core functor which takes a quasicategory to its maximal sub Kan complex, the inclusion of $\Line_{A}$ into $\Perf_{A}$ induces a symmetric monoidal functor $i \colon  \iota_0 \Line_{A} \to \iota_0 \Perf_{A}$ of symmetric monoidal $\infty$-groupoids.  The $\infty$-category $\Perf_{A}$ admits an additional symmetric monoidal structure under the coproduct of $A$-modules and the two monoidal structures combine to give the Kan complex $\iota_0 \Perf_{A}$ the structure of an $E_{\infty}$ ring space \cite[Corollary {8.11}]{GGN}.  The algebraic $K$-theory $K(A)$ is the connective $E_{\infty}$ ring spectrum for which $\Omega^{\infty}K(A)$ is the group completion of $\iota_0 \Perf_{A}$ as an $E_{\infty}$ ring space 
\cite[Definition {8.3} and Theorem 8.6]{GGN}.  The resulting composite
\[
\iota_0 \Line_{A} \to \iota_0 \Perf_{A} \to \Omega^{\infty} K(A)
\]
is a map of $E_{\infty}$ spaces, where we use the multiplicative structure on $\iota_0 \Perf_{A}$ and $\Omega^{\infty}K(A)$.  This map is a model for the map \eqref{eq:pre_w}, as can be verified along the lines of  \cite[Proposition 2.9]{ABGHR2}, so the map in question is $E_{\infty}$.

The source of the map is a connected Kan complex and its image lies in the component of the unit object 
$A \in \iota_0 \Perf_{A}$, hence in the component $\SL_1 K(A)$ of the unit in the $E_{\infty}$ ring space $\Omega^{\infty} K(A)$.  In particular, the map factors as an $E_{\infty}$ map through the space $\GL_1 K(A)$ of units, which deloops to the map of spectra $\mu$.
\end{proof}

\medskip

We now recall how to construct twisted forms of the cohomology theory represented by the ring spectrum $A$, following \cite{ABG, ABG2, ABGHR}.  Given a space $X$ and a map $\omega \colon X \to B \GL_1 A$, let $E \to X$ be the $\GL_1(A)$-fibration pulled back from $B\GL_1 A$ via $\omega$.  Define an $A$-module spectrum
$$\A(X; \omega) := A \wedge^{L}_{\Sigma^\infty \GL_1 A_+} \Sigma^\infty E_+.$$
This is the Thom spectrum associated to the parametrized spectrum of $A$-lines over $X$ classified by $\omega$.  

\begin{defn} 

The {\it $\omega$-twisted $A$-homology and $A$-cohomology groups of} $X$ are defined by
$$A_q(X; \omega) := \pi_q \A(X; \omega) \quad \mbox{ and } \quad A^q(X; \omega) := \pi_{-q} F_A(\A(X; -\omega), A).$$
\noindent Notice that in the definition of twisted cohomology we use the inverse twist $-\omega$ under the monoidal structure induced by the smash product of invertible $A$-modules.  Sometimes the opposite convention is used, but we prefer this choice because it ensures that twisted cohomology agrees with homotopy classes of sections of the parametrized spectrum associated to $\omega$ \cite{Lind}, and conforms with grading conventions when $\omega$ carries a topological dimension, for example when it arises from a classifying map for a virtual vector bundle via the J-homomorphism.  

\end{defn}

Suppose that $h$ is a spectrum, and that $\tau: h \to \Sigma \gl_1 A$ is a map of spectra.  We regard the map $\tau$ as an \emph{$E_\infty$ twisting of $A$}, because the infinite loop map $\Omega^{\infty} \tau: \Omega^\infty h \to B \GL_1 A$ allows us to twist the $A$-cohomology of a space $X$ by elements $[H] \in h^0(X)$.  To do so, we represent the class $[H]$ as a map $H: X \to \Omega^\infty h$ and define the $H$-twisted cohomology of $X$ to be the twisted $A$-homology and $A$-cohomology groups associated to the twist $\Omega^{\infty} \tau \circ H \colon X \to B\GL_1 A$, i.e. we make the abbreviation $A^q(X; H) = A^q(X; \Omega^{\infty} \tau \circ H)$.  Different representatives for the same cohomology class give isomorphic twisted cohomology groups, so the twisted cohomology theory associated to $H$ only depends on the underlying cohomology class $[H]$, but only up to non-canonical isomorphism.

\medskip
We can use the map $\mu \colon \Sigma \gl_1 A \to \gl_1K(A)$ considered in Proposition \ref{prop_3} to construct $E_\infty$ twists of the algebraic $K$-theory spectrum $K(A)$ from (shifts of) $E_\infty$ twists of $A$, and we would like to know whether or not the twists constructed in this way
are essential, i.e. homotopically nontrivial. 

\begin{prop} \label{iter_prop}

For every $E_\infty$ twisting $\tau: h \to \Sigma \gl_1 A$, the composite
$$\Sigma(\mu \circ \tau): \Sigma h \to \Sigma \gl_1 K(A)$$
is an $E_\infty$ twisting of $K(A)$.  Further, if the rationalization of $\tau$ is essential, so too is the rationalization of $\Sigma(\mu \circ \tau)$.

\end{prop}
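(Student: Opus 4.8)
The plan is to prove the two assertions in turn. First, that $\Sigma(w\circ\tau)$ is an $E_\infty$ twisting of $K(A)$: by Proposition \ref{prop_3}, $w\colon \Sigma\gl_1 A \to \gl_1 K(A)$ is a map of spectra — and, being assembled from the symmetric monoidal functor $\iota_0\Line_A \to \iota_0\Perf_A$ of $E_\infty$-groups, it is a map of $E_\infty$ ring spectra in the appropriate sense (i.e. it is compatible with the $E_\infty$ structures on the spaces of units). Granting that $w$ is an infinite loop map, the composite $w\circ\tau\colon h \to \gl_1 K(A)$ is a map of spectra whenever $\tau\colon h \to \Sigma\gl_1 A$ is, and $\Sigma$ of a map of spectra is again a map of spectra. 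By the definition of an $E_\infty$ twisting recalled just above the statement — namely, any map of spectra into $\Sigma\gl_1(-)$ of the relevant ring spectrum — the composite $\Sigma(w\circ\tau)\colon \Sigma h \to \Sigma\gl_1 K(A)$ is by construction an $E_\infty$ twisting of $K(A)$. So the first assertion is essentially a formal unwinding of definitions and requires no real work beyond citing Proposition \ref{prop_3}.

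The substance is the second assertion. Here the strategy is to reduce to a statement about the rationalized unit map and then to invoke the splitting principle for rational infinite loop spaces. After rationalization, $\gl_1 A_\Q$ and $\gl_1 K(A)_\Q$ are products of Eilenberg--MacLane spectra, with $\pi_* \gl_1 A_\Q \cong \pi_* A_\Q$ in positive degrees (and similarly for $K(A)$), because rationally the space of units splits off from the underlying infinite loop space. The key input is the Waldhausen-type computation (or, more elementarily, the plus-construction description together with rational homotopy) showing that rationally the composite
\[
\Sigma\gl_1 A_\Q \xarr{w_\Q} \gl_1 K(A)_\Q \to K(A)_\Q
\]
is, on homotopy groups, the suspension map followed by the inclusion of the rank-one summand, and that the latter is rationally split injective. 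Concretely, one uses that $\pi_{m+1} K(A)_\Q$ contains $\pi_m A_\Q$ as a direct summand via the map induced by $\iota_0\Line_A \to \iota_0\Perf_A$ — this is the rational analogue of the fact that $\pi_* ku_\Q$ splits off of $\pi_* K(ku)_\Q$, and can be seen either from the Dennis trace / cyclotomic trace landing in $\mathit{THH}$ and $\mathit{TC}$, or more directly from the linearization $K(A) \to K(\pi_0 A)$ combined with rational computations. Given this splitting, $w_\Q$ is injective on rational homotopy in the relevant range, hence $\Sigma(w\circ\tau)_\Q = \Sigma(w_\Q \circ \tau_\Q)$ is essential as soon as $\tau_\Q$ is, since $\Sigma$ is an equivalence of spectra onto its image up to a shift and in particular detects nontriviality rationally.

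I would organize the argument as: (1) recall that $w$ is an $E_\infty$ map and that $\Sigma$ preserves maps of spectra, yielding the first claim; (2) rationalize, and identify $\gl_1(-)_\Q$ with the connective cover (in positive degrees) of the underlying rational ring spectrum; (3) establish that the rational unit map $A_\Q \to K(A)_\Q$ is split injective on homotopy, citing the relevant rational computation of $K$-theory (e.g.\ via the trace methods or via \cite{GGN} together with rational splitting results); (4) conclude that $w_\Q$ is nonzero on any rational homotopy class in the image of $\tau_\Q$, hence $\Sigma(w\circ\tau)_\Q$ is essential. The main obstacle is step (3): one must be careful that the map $\gl_1 A \to \gl_1 K(A)$, built from the inclusion of lines, is compatible with the rational splitting of $K(A)$ off of its rank-one part, rather than, say, factoring through a piece that dies rationally. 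This compatibility is exactly the content of the observation that the composite $B\GL_1 A \to \coprod_n B\GL_n A \to \Omega^\infty K(A)$ hits the rank-one components, so that on rational homotopy it is detected by the rank-one summand; making this precise — ideally by exhibiting an explicit retraction, e.g.\ a determinant-type or rank-one-projection map $K(A) \to$ (something receiving $\gl_1 A$) — is where the real care is needed. Everything else is formal manipulation of twistings and rationalization.
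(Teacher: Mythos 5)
Your treatment of the first claim is right and matches the paper: it is purely formal given Proposition \ref{prop_3}. Your strategy for the second claim is also the same as the paper's in outline --- reduce to showing that $w$ admits a retraction after rationalization, so that it cannot kill a rationally essential class --- but you leave the one substantive step, your step (3), open, and the justifications you sketch for it do not close it. The paper closes it by citing the rational determinant of Ausoni--Rognes \cite[Proposition 4.7]{AR}: a map of spaces ${\det}_\Q \colon \Omega^\infty K(A) \to (B\GL_1 A)_\Q$ whose composite with $w$ is the rationalization map of $B\GL_1 A$. This is exactly the ``determinant-type or rank-one-projection map'' you say one would ideally exhibit; it is a genuine theorem, not a formal consequence of the definitions, and it is the entire content of the proposition. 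Without it (or an equivalent), your argument does not go through.

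Of the two substitutes you offer, one is actively wrong and the other is undeveloped. The linearization $K(A) \to K(\pi_0 A)$ carries $B\GL_1 A$ into $B\GL_1(\pi_0 A) = B(\pi_0 A)^\times$, a $K(\pi,1)$, so it annihilates all the higher rational homotopy of $B\GL_1 A$ coming from $\pi_{\geq 1}A$ --- precisely the classes one needs to detect (e.g.\ for $A = ku$ and $\tau$ the degree-$3$ twist, linearization to $K(\Z)$ loses $\pi_2 ku$ entirely). The trace to $THH$ does work in the special case $A \simeq_\Q \Sigma^\infty G_+$ via the Waldhausen splitting --- and indeed the paper uses exactly that mechanism later, in the proof of Theorem \ref{non_nilp_thm}, for the iterated $ku$ case --- but for a general $E_\infty$ ring $A$ the compatibility of $B\GL_1 A \to \Omega^\infty K(A) \to \Omega^\infty THH(A)$ with a rational splitting is not something you establish, and the proposition is stated for arbitrary $A$. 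So the fix is simply to replace step (3) with the citation of ${\det}_\Q$; the rest of your argument (rational spectra are detected on homotopy, $\Sigma$ preserves essentiality) is fine.
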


\begin{proof}

Only the latter statement needs to be proved.  One can see this via the rational determinant $\det_\Q$, defined by Ausoni-Rognes in \cite[Proposition 5.4]{AR}.  This is a map of spaces
$${\det}_\Q: \Omega^\infty K(A) \to (B\GL_1 A)_\Q$$
whose composite with $\Omega^\infty \mu$ is the rationalization map of $B\GL_1 A$.  Thus $\Omega^\infty \Sigma(\mu \circ \tau)$ factorizes the rationalization of $\Omega^\infty \tau$, which yields the result. 

\end{proof}

\begin{rem}  There is not an integral determinant map lifting $\det_{\Q}$.
In fact, the obstruction to the existence of a continuous map $\Omega^\infty K(ku) \to B{\rm GL}_1(ku)$
 with determinant-like properties
 leads to the notion of oriented 2-vector bundles, and gives rise to an oriented version of 
 K-theory of 2-vector bundles with a lift of the natural map from $K(\Z,3)$, called the determinant gerbe map \cite{Kr13}.\end{rem}

\subsection{Twisting the iterated algebraic K-theory of $ku$}
\label{sec:iterated_twistings_ku}

Our main example arises via connective complex topological K-theory, $ku$.  There is a well known $E_{\infty}$ twisting of $ku$ by 3-dimensional cohomology classes; the map 
$$\tau: \Sigma^3 H\Z \to \Sigma \gl_1 ku$$
is the delooping of the map of $E_{\infty}$-spaces $\C P^\infty \to \GL_1 ku$ that regards a complex line as an invertible $\C$-module.

\begin{defn}

Write $\ka_n$ for the {\it iterated algebraic K-theory spectrum} $K^{(n-1)}(ku)$.  Let 
\[
\tau_n: \Sigma^{n+2} H\Z \to \Sigma \gl_1 \ka_n
\]
be the {\it $E_\infty$ twisting of $\ka_n$} obtained by an $(n-1)$-fold iteration of the procedure in Proposition \ref{iter_prop} applied to $\tau = \tau_{1}$.

\end{defn}

Since the original map $\tau$ was rationally essential in homotopy, so too are the maps $\tau_n$:

\begin{cor} \label{non_tor_cor}

The composite of $\tau_n$ and the fundamental class $\iota$ of $\Sigma^{n+2} H\Z$ defines a non-torsion element 
$\tau_n \circ \iota \in \pi_{n+2}\Sigma \gl_1 \ka_n.$

\end{cor}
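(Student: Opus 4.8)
The plan is to deduce the corollary from Proposition \ref{iter_prop} by an induction on $n$, once the base case has been pinned down and the phrase ``non-torsion'' has been translated into a statement about essentiality after rationalization. Throughout, recall that $\gl_1 A$ agrees with $A$ in positive degrees for any connective ring spectrum $A$, so that $\pi_{n+2}\Sigma\gl_1\ka_n \cong \pi_{n+1}\gl_1\ka_n \cong \pi_{n+1}\ka_n$, and that the fundamental class $\iota$ generates $\pi_{n+2}\Sigma^{n+2}H\Z$.

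First I would settle the case $n=1$. By construction $\tau_1\colon \Sigma^3 H\Z \to \Sigma\gl_1 ku$ is the delooping of the $E_\infty$ map $\C P^\infty = K(\Z,2)\to \GL_1 ku$ sending the tautological line to its class as an invertible $ku$-module. Restricting along the inclusion of the bottom cell $S^2 = \C P^1 \hookrightarrow \C P^\infty$ and passing to reduced groups, this class becomes $1+h$ with $h \in \widetilde{ku}^0(S^2) \cong \pi_2 ku$ the reduced class of $\mathcal{O}(1)$; since $c_1(\mathcal{O}(1))$ generates $H^2(S^2;\Z)$, the class $h$ is a generator of $\pi_2 ku$, i.e.\ $\pm\beta$. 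Hence $\tau_1\circ\iota = \pm\beta \in \pi_2 ku \cong \pi_3\Sigma\gl_1 ku$ is non-torsion, so in particular the rationalization of $\tau_1$ is essential.

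Next I would run the induction. By definition $\tau_n = \Sigma(w\circ\tau_{n-1})$, where $w\colon \Sigma\gl_1\ka_{n-1}\to \gl_1 K(\ka_{n-1}) = \gl_1\ka_n$ is the map of Proposition \ref{prop_3} for $A = \ka_{n-1}$. The final assertion of Proposition \ref{iter_prop} is precisely that if the rationalization of $\tau_{n-1}$ is essential then so is the rationalization of $\tau_n$; combined with the base case, this gives essentiality of the rationalization of $\tau_n$ for all $n\geq 1$. It then remains only to translate: the source $\Sigma^{n+2}H\Q$ of the rationalized map has a single nonzero homotopy group, namely $\Q$ in degree $n+2$, and any map out of it into a rational spectrum is detected by its effect in that degree, since the homotopy category of rational spectra is equivalent to that of graded rational vector spaces. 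Thus essentiality of the rationalization of $\tau_n$ is exactly the nonvanishing of the image of $\iota$ in $\pi_{n+2}(\Sigma\gl_1\ka_n)\otimes\Q$, i.e.\ the statement that $\tau_n\circ\iota$ is non-torsion in $\pi_{n+2}\Sigma\gl_1\ka_n$.

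Since this is a corollary of Proposition \ref{iter_prop}, I do not expect a genuine obstacle: the real work is already done in that proposition, whose proof rests on the Ausoni--Rognes rational determinant ${\det}_\Q\colon \Omega^\infty K(A) \to (B\GL_1 A)_\Q$ splitting the rationalization of $w$. The only points requiring care are the book-keeping in the last step --- making sure the two notions ``non-torsion on the fundamental class'' and ``essential after rationalization'' genuinely agree --- together with the classical base-case identification of $\tau_1\circ\iota$ with the Bott class.
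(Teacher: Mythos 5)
Your proposal is correct and follows the same route as the paper: the paper derives the corollary directly from the rational essentiality of $\tau_1$ (detected by the Bott class) together with the iteration of Proposition \ref{iter_prop}, whose proof via the Ausoni--Rognes rational determinant is the real content. Your added book-keeping --- identifying $\tau_1\circ\iota$ with $\pm\beta$ on the bottom cell of $\C P^\infty$, and using that a map out of $\Sigma^{n+2}H\Q$ into a rational spectrum is detected on $\pi_{n+2}$ --- correctly fills in the steps the paper leaves implicit.
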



\begin{defn} We write $\Omega^{\infty + 1}\tau_n \circ \iota \colon S^{n + 1} \to GL_1 \ka_n$ for the map of spaces representing the image of the class $\tau_n \circ \iota$ under the canonical isomorphism  $\pi_{n+2}\Sigma \gl_1 \ka_n \cong \pi_{n + 1} \gl_1 \ka_n$.  The map $\Omega^{\infty + 1}\tau_n$ carries $S^{n + 1}$ into the component of $\GL_1 \ka_n$ corresponding to $1 \in \pi_0 \ka_n$.  Subtracting $1$ gives a based map into $\Omega^\infty \ka_n$, and thus a class $\beta_n \in \pi_{n + 1}\ka_n$.  In other words, we define $\beta_n := [\Omega\tau_n \circ \iota] - 1$, where $[\Omega\tau_n \circ \iota]$ denotes the image of $\Omega\tau_n \circ \iota$ under the isomorphism $\pi_{n + 1} \gl_1 \ka_n \cong \pi_{n + 1} \ka_n$ induced by the inclusion of components $\GL_1 \ka_n \subset \Omega^{\infty} \ka_n$.  Equivalently, $\beta_{n}$ is represented by the composite map of spectra
\begin{equation}\label{eq:beta}
\beta_n \colon \xymatrix@1{S^{n+1} \ar[r]^-{\iota} & \Sigma^\infty K(\Z, n+1)_+ \ar[r]^-{t_n} & \ka_n},
\end{equation}
where $\iota$ is the fundamental class and $t_n$ is adjoint to $\Omega \tau_n$ in the adjunction
$$[\Sigma^\infty K(\Z, n+1)_+, \ka_n]_{E_\infty\mathrm{ring}} \cong [\Sigma^{n+1} H\Z, \gl_1 \ka_n]_{\mathrm{Sp}}.$$
\end{defn}

Note that $\beta_1 \in \pi_2 \ka_1 = \pi_2 ku$ is the usual Bott class which we invert to obtain $KU$.

\begin{thm} \label{non_nilp_thm}

If $n$ is even, $2\beta_n^2=0$.  However, if $n$ is odd then $\beta_n$ is not nilpotent and all powers $\beta_n^m$ are non-torsion, for any positive integer 
$m$.

\end{thm}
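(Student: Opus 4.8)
The plan is to analyze the element $\beta_n$ through its image under a rational invariant, reducing the non-nilpotence and non-torsion claims to a statement about the rational homotopy of $\ka_n$ together with a parity observation. First I would recall that $\beta_n$ is by construction the image of the fundamental class of $K(\Z,n+1)$ under the $E_\infty$ ring map $\Sigma^\infty K(\Z,n+1)_+ \to \ka_n$ of Equation \eqref{equation_1}, shifted by $1$; equivalently, $1 + \beta_n$ lies in the component of the unit in $\GL_1\ka_n$ and arises from $\tau_n\circ\iota$. The key structural input is Proposition \ref{iter_prop}: the twisting $\tau_n$ is rationally essential, so $\tau_n\circ\iota$ is a non-torsion class in $\pi_{n+1}\gl_1\ka_n$, hence $\beta_n$ is non-torsion in $\pi_{n+1}\ka_n$. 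That gives the case $m=1$; the content is to promote this to all powers $\beta_n^m$ when $n$ is odd, and to see the obstruction when $n$ is even.

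For the even case, the statement $2\beta_n^2 = 0$ should follow from graded-commutativity: $\beta_n$ sits in odd total degree $n+1$ when $n$ is even, so $\beta_n^2 = -\beta_n^2$ in the graded-commutative ring $\pi_*\ka_n$ (the algebraic K-theory of an $E_\infty$ ring is $E_\infty$, hence has graded-commutative homotopy), giving $2\beta_n^2 = 0$ immediately. For the odd case, $\beta_n$ has even degree $n+1$, so there is no such sign obstruction, and the natural strategy is to exhibit a ring homomorphism out of $\pi_*\ka_n$ (or better, out of $\pi_*\ka_n\otimes\Q$) under which $\beta_n$ maps to a genuinely non-nilpotent element — e.g., a polynomial generator. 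I would use the rational determinant $\det_\Q\colon \Omega^\infty\ka_n \to (B\GL_1\ka_{n-1})_\Q$ of Ausoni–Rognes iteratively, or more directly exploit that, rationally, iterated algebraic K-theory of $ku$ is computed (Arthan, and the computations prolonged in the "rel\_sec" section referenced in the excerpt) to be a product of Eilenberg–MacLane spectra whose rational homotopy ring is explicitly a polynomial/free graded-commutative algebra. The point is that $\beta_n\otimes 1 \in \pi_{n+1}(\ka_n)\otimes\Q$ should be (a nonzero multiple of) a polynomial generator, so all its powers are nonzero rationally, hence non-torsion integrally, hence $\beta_n$ is non-nilpotent.

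Concretely, the chain I would set up is: the $E_\infty$ ring map $\Sigma^\infty K(\Z,n+1)_+\to\ka_n$ induces on rational homotopy a map $H_*(K(\Z,n+1);\Q) = \Q[\iota_{n+1}] \to \pi_*(\ka_n)\otimes\Q$ sending $\iota_{n+1}\mapsto \beta_n$ (when $n+1$ is even, rational homology of $K(\Z,n+1)$ is the polynomial algebra on the fundamental class). It then suffices to show this composite is injective in the relevant range, or at least that $\iota_{n+1}^m\mapsto\beta_n^m$ is nonzero for all $m$. I would detect this by post-composing with $\det_\Q$ down to $ku$-level information, where the target of the iterated rational determinant is $(B\GL_1 ku)_\Q \simeq K(\Q,3)\times(\text{higher factors})$, and track that the fundamental class survives — this is exactly the mechanism already used in the proof of Proposition \ref{iter_prop} and Corollary \ref{non_tor_cor}, now applied to powers rather than just the generator.

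\emph{The main obstacle} will be the last step: showing that the rational determinant, or whatever rational invariant one chooses, detects \emph{all powers} $\beta_n^m$ and not merely $\beta_n$ itself. The maps $\det_\Q$ are maps of spaces, not of ring spectra, so they need not be multiplicative, and one cannot naively conclude $\det_\Q(\beta_n^m) = \det_\Q(\beta_n)^m$. The resolution should be to work instead with the genuinely multiplicative structure: either (i) use that $\pi_*(\ka_n)\otimes\Q$ is known by the computations of section \ref{rel_sec} to be a free graded-commutative algebra in which the class $\beta_n$ is identified with a polynomial generator coming from the $K(\Z,n+1)$ summand, so non-nilpotence is automatic; or (ii) observe that the $E_\infty$ ring map from $\Sigma^\infty K(\Z,n+1)_+$ is genuinely multiplicative and $\pi_*(\Sigma^\infty K(\Z,n+1)_+)\otimes\Q = H_*(K(\Z,n+1);\Q)$ is a polynomial ring on $\iota_{n+1}$ (for $n$ odd), so it suffices to show this ring map is rationally injective — which again reduces to the redshift-style computation. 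I expect option (i), leaning on the explicit rational computation prolonged from \cite{adr}, to be the cleanest route, with the determinant argument of Proposition \ref{iter_prop} serving as the conceptual reason the generator is not killed.
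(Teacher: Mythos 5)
Your even case and your overall framing are right: $2\beta_n^2=0$ is graded-commutativity, and for odd $n$ the correct reduction is your option (ii) — $t_n\colon \Sigma^\infty K(\Z,n+1)_+\to\ka_n$ is a ring map, $\pi_*(\Sigma^\infty K(\Z,n+1)_+)\otimes\Q \cong H_*(K(\Z,n+1);\Q)$ is polynomial on $\iota$ when $n$ is odd, so it suffices to show $t_n$ is rationally injective. You also correctly diagnose why $\det_\Q$ alone cannot finish the job (it is a map of spaces, not multiplicative). But the proposal stops exactly where the real work begins: you never supply the mechanism that detects $t_n$ rationally, and the route you say you prefer, option (i), does not exist. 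Section \ref{rel_sec} contains no computation of $\pi_*(\ka_n)\otimes\Q$ — it only prolongs the relation $\beta_2=2\zeta-\nu$ to $\beta_n = 2Z_n - N_n$ — and the paper explicitly states that it has little understanding of the coefficient ring $R^*$ and that it is expected to be very complicated already for $n=2$. So there is no known "free graded-commutative algebra with $\beta_n$ a polynomial generator" to lean on.

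The paper's actual detection device is the iterated Dennis trace $tr^{n-1}\colon \ka_n \to THH^{(n-1)}(ku)$, which is a map of ring spectra and hence multiplicative — this is what replaces $\det_\Q$. One then uses Segal's theorem that $\Sigma^\infty \C P^\infty_+\to ku$ is a rational equivalence to identify $THH^{(n-1)}(ku)_\Q$ with $THH^{(n-1)}(\Sigma^\infty \C P^\infty_+)_\Q$, the identification $THH^{(m)}(\Sigma^\infty G_+)\simeq \Sigma^\infty \Map(T^m,B^mG)_+$ for connected loop spaces $G$, and the inclusion of constant maps to split off $\Sigma^\infty K(\Z,n+1)_+$ as a wedge summand of the target. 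The final, nontrivial claim — that the composite of $t_n$ with the iterated trace and this projection is the rationalization map of $\Sigma^\infty K(\Z,n+1)_+$, so that $t_n$ is rationally split injective — is established by iterating Waldhausen's splitting of $Q(BG_+)$ off of $A(BG)$ via the trace. None of this $THH$ machinery appears in your proposal, and without it (or some substitute multiplicative rational invariant that provably detects $\iota$), the non-nilpotence and non-torsion of the powers $\beta_n^m$ for $m\ge 2$ remain unproved.
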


\begin{proof}

The first claim is simply that $\beta_n$ is an odd-dimensional element of a graded-commutative ring.  For the second, consider the composite map \eqref{eq:beta} and note that the Hurewicz image of $\iota \colon S^{n+1} \to \Sigma^\infty K(\Z, n+1)_+$ is essential.  In fact, whenever $n$ is odd, the homology ring $H_*(K(\Z, n+1); \Q)$ is the rational  polynomial ring on the Hurewicz image of $\iota$.  In particular, all powers of $\iota$ are rationally essential.

We recall from \cite{Se} that the natural map of ring spectra $\Sigma^\infty \C P^\infty_+ \to ku$ adjoint to $\tau$ is a rational equivalence.  Therefore the iterated Dennis trace map, which is a transformation from algebraic K-theory to topological Hochschild homology $THH$ 
(see \cite{BHM, DGM}), followed by rationalization, may be written as
$$\xymatrix@1{\ka_n = K^{(n-1)}(ku) \ar[r]^-{tr^{n-1}} & THH^{(n-1)}(ku) \ar[r] & THH^{(n-1)}(ku_\Q) & THH^{(n-1)}((\Sigma^\infty \C P^\infty_+)_\Q) \ar[l]_-\simeq}.$$
However, for a loop space $G$, one has $THH(\Sigma^\infty G_+) \simeq \Sigma^\infty LBG_+$.  When $G$ is a $m$-fold loop space, $BG$ (and hence $LBG$) is a $(m-1)$-fold loop space.  We iteratively observe that 
$$THH^{(m)}(\Sigma^\infty G_+) \simeq \Sigma^\infty (LB)^m(G)_{+} \simeq \Sigma^\infty \Map(T^m, B^m G)_{+}$$
if $G$ is connected.  In the second equivalence, we employ the fact that for connected $G$, $LBG \simeq BLG$.  The space of constant functions $T^m \to B^m G$ is homeomorphic to $B^m G$, and so $THH^{(m)}(\Sigma^\infty G_+)$ contains a copy of $\Sigma^\infty B^m G_+$ as a wedge summand.

In the case $G=\C P^\infty = K(\Z, 2)$, it follows that $\Sigma^\infty K(\Z, n+1)_+$ is a summand of $THH^{(n-1)}(\Sigma^\infty \C P^\infty_+)$.  Since rationalization is a smashing localization, 
$$THH(A_{\Q}) \simeq (THH(A))_{\Q}$$
for ring spectra $A$.  Thus the target of the iterated trace map above splits off a wedge factor of $(\Sigma^\infty K(\Z, n+1)_+)_\Q$.  

We claim that the composite of $t_n$ with the iterated trace map and the projection to this wedge summand is the map from $\Sigma^\infty K(\Z, n+1)_+$ to its rationalization; then we may conclude that $t_n$ is an injection on rational homotopy. To see the claim, note that if $G$ is an infinite loop space, there is a homotopy commutative diagram
\[
\xymatrix{
\Sigma^{\infty} BG_{+} \ar[r] \ar[drrr]_-{\mathrm{id}} & \Sigma^{\infty} B \GL_1(\Sigma^{\infty}G_{+})_{+} \ar[r]^-{\eqref{eq:pre_w}} & K(\Sigma^{\infty}G_{+}) \ar[r]^-{tr} &  THH(\Sigma^\infty G_{+}) \simeq \Sigma^\infty LBG_+ \ar[d]^-{ev} \\
& & & \Sigma^{\infty} BG_{+}
}
\]
%
%
which yields Waldhausen's splitting \cite{waldhausen} of $Q(BG_+)$ off of $A(BG) = \Omega^\infty K(\Sigma^\infty G_+)$.   Then the claim follows by iteration (taking $G = K(\Z, m)$ for $m=1, \dots, n$) and rationalization (to accommodate the rational Segal equivalence).

\end{proof}

\begin{defn} 
\label{def U}
 When $n$ is odd, define $\KA_n := \ka_n[\beta_n^{-1}]$. \end{defn}

\noindent One could of course make this construction for $n$ even, but the nilpotence of $\beta_n$ away from the prime 2 in that case will force the resulting spectrum to have 2-torsion homotopy.  In fact, these spectra are contractible when $n$ is even; this follows from Theorem \ref{arthan_thm}, below.

\medskip
The $E_{\infty}$ twisting $\tau_n$ of $\ka_n$ induces an $E_{\infty}$ twisting $K(\Z, n + 2) \to B \GL_1 \KA_n$ by composition with the localization map.  By the phrase ``$n$-gerbe'' we mean a generic term for any geometric structure classified by the Eilenberg-MacLane space $K(\Z, n + 2)$ (for a more general discussion, see \cite[7.2.2.2]{LurieHTT}).  Thus, a complex line bundle is a $0$-gerbe, and a gerbe with band $U(1)$ is a $1$-gerbe.  We say that the $E_{\infty}$ twisting of $\ka_n$ and $\KA_n$ constructed in this section are \emph{twistings by $n$-gerbes}, since an $n$-gerbe over $X$ gives rise to a map $H \colon X \to K(\Z, n + 2)$ by which we can twist these theories.

\subsection{Some relations in the homotopy of $\ka_n$} \label{rel_sec}

Although the following observations will not be used in our work with the periodic spectrum $\KA_n$, they provide the first known results about the homotopy type of iterated algebraic $K$-theory of $ku$. 
In \cite{adr}, it is shown that for $n=2$  there is a class $\zeta \in \pi_3 K(ku)$ with the property that  
$$\beta_2 = 2\zeta-\nu,$$ 
where $\nu$ is the image of the quaternionic Hopf fibration $\nu \in \pi_3(S^0)$ under the unit map to $K(ku) = \ka_2$.  One may iteratively prolong this equation to one in $\pi_{n+1} \ka_n$ in a natural fashion, as we now explain.  Following the recipe given in Proposition \ref{prop_3}, we have  a homomorphism\footnote{Here we identify $\pi_k R = \pi_k \Omega^\infty_0 R$ with $\pi_k \gl_1 R = \pi_k \GL_1 R$ for $k>0$ via the shift in components by 1.}
$$\pi_{n+1} \ka_n \cong \pi_{n+1} \gl_1 \ka_n \to \pi_{n+1} \Sigma^{-1} \gl_1 K(\ka_n)
 \cong \pi_{n+2} \ka_{n+1}\;$$
for each $n > 0$.  By construction, this map carries $\beta_n$ to $\beta_{n+1}$; it also allows us to define classes $Z_n$ and $N_n$ in $\pi_{n+1} \ka_n$ as the iterated images of $\zeta$ and $\nu$.  Since this is a group homomorphism, the relation $\beta_2 = 2\zeta-\nu$ persists:
\begin{prop}
The class $\beta_n$ is given in terms of the iteratively defined classes $Z_n$ and $N_n$ by 
$$\beta_n = 2 Z_n - N_n.$$
\end{prop}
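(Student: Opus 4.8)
The plan is to prove the relation by induction on $n \geq 2$, transporting the base relation $\beta_2 = 2\zeta - \nu$ of \cite{adr} upward along the prolongation homomorphism described just above the statement. First I would fix notation: write $\phi_n \colon \pi_{n+1}\ka_n \to \pi_{n+2}\ka_{n+1}$ for that homomorphism, which following the recipe of Proposition \ref{prop_3} is obtained by applying $\pi_{n+1}$ to a desuspension of the spectrum map $w \colon \Sigma\gl_1\ka_n \to \gl_1 K(\ka_n) = \gl_1\ka_{n+1}$. By construction $Z_{n+1} := \phi_n(Z_n)$ and $N_{n+1} := \phi_n(N_n)$, with $Z_2 = \zeta$ and $N_2 = \nu$, so the classes appearing in the statement are exactly the iterates of $\zeta$ and $\nu$ under the $\phi_m$.

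The base case $n = 2$ is then immediate: it is the relation $\beta_2 = 2Z_2 - N_2$ recalled from \cite{adr}. For the inductive step I would assume $\beta_n = 2Z_n - N_n$ in $\pi_{n+1}\ka_n$ and apply $\phi_n$. Since $w$ is a morphism of spectra, $\phi_n$ is a homomorphism of abelian groups, so additivity gives
\[
\phi_n(\beta_n) = 2\phi_n(Z_n) - \phi_n(N_n) = 2Z_{n+1} - N_{n+1}.
\]
Thus the induction reduces to the single identification $\phi_n(\beta_n) = \beta_{n+1}$.

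To establish that identification I would return to the maps defining the $\beta_m$. By the iteration of Proposition \ref{iter_prop} the twisting $\tau_{n+1}$ is the composite $\Sigma(w \circ \tau_n)$, and under the identifications $\pi_{m+1}\ka_m = \pi_{m+1}\gl_1\ka_m = \pi_{m+2}\Sigma\gl_1\ka_m$ used in the statement, $\beta_m$ is the image under $(\tau_m)_*$ of the fundamental class $\iota$ of $\Sigma^{m+2}H\Z$. Because the fundamental class of $\Sigma^{n+3}H\Z$ is the suspension of that of $\Sigma^{n+2}H\Z$, naturality of suspension shows that $w_*$ carries $(\tau_n)_*(\iota)$ to $(\tau_{n+1})_*(\Sigma\iota)$; after unwinding the component shifts this is precisely $\phi_n(\beta_n) = \beta_{n+1}$, the compatibility already asserted in the discussion preceding the statement.

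The one step that is more than bookkeeping is this last identification $\phi_n(\beta_n) = \beta_{n+1}$, since it is a naturality statement about how the Bott-type classes are transported along the iteration of Proposition \ref{iter_prop} and must be verified at the level of the maps $\tau_m$ (equivalently their ring-spectrum adjoints $t_m$) rather than on homotopy groups alone. Everything else is the formal additivity of $\phi_n$, which holds because $w$ is a map of spectra.
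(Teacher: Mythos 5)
Your proposal is correct and follows the same route as the paper, which proves the proposition in the single sentence preceding it: the prolongation map is a group homomorphism carrying $\beta_n$ to $\beta_{n+1}$ (and, by definition, $Z_n$ to $Z_{n+1}$ and $N_n$ to $N_{n+1}$), so the relation $\beta_2 = 2\zeta - \nu$ persists. Your extra care in verifying $\phi_n(\beta_n) = \beta_{n+1}$ via $\tau_{n+1} = \Sigma(w \circ \tau_n)$ and the suspension of the fundamental class is a welcome filling-in of a step the paper merely asserts.
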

Furthermore, since $24\nu = 0$, we also have $24 N_n = 0$.  Additionally, $\nu$ is nilpotent.  Since the map above is not a ring homomorphism (in fact, it behaves in some sense more like a derivation), we cannot conclude the same for $N_n$.  However, if $N_n$ is actually nilpotent, then a sufficiently large power of $\beta_n$ is 2-divisible (at least after inverting 3).  

\subsection{Higher Snaith spectra}
\label{Sec Snaith}

When $n=1$, Definition \ref{def U} tautologically yields periodic complex K-theory: $\KA_1 = KU$.  In this case, there is an alternative construction of $KU$ due to Snaith \cite{Sn}.  The map $\Sigma^{\infty} K(\Z, 2)_{+}[\iota^{-1}] \to KU$ induced by the localization of the map $t_1 \colon \Sigma^{\infty} K(\Z, 2)_+ \to ku$ studied in the proof of Theorem 
\ref{non_nilp_thm} is an equivalence of spectra.  In this section, we consider generalizations of this construction using the domain of the map $t_n \colon \Sigma^{\infty} K(\Z, n + 1)_+ \to \ka_n$ and discuss their relationship with the twistings from the previous section.  

Guided by Snaith's theorem, we make the following definition:

\begin{defn}
\label{def S}
For $n$ odd, define the \emph{Arthan spectrum} $\KS_n$ to be the localization 
$$\KS_n := \Sigma^\infty K(\Z, n+1)_+[\iota^{-1}]$$
of the suspension spectrum of $K(\Z, n+1)$ at its fundamental class.

\end{defn}

The $E_\infty$ twisting map $t_n: \Sigma^\infty K(\Z, n+1)_+ \to \ka_n$ carries $\iota$ to $\beta_n$, and so descends to a map between the periodic spectra $T_n: \KS_n \to \KA_n$.  Snaith's theorem indicates that this is an equivalence when $n=1$.  When $n>1$, these spectra were studied by Arthan \cite{arthan}, where they were found to display remarkably different behavior:

\begin{thm}[Arthan] \label{arthan_thm} If $n$ is even, $\KS_n$ is contractible.  If $n$ is odd and greater than 1, $\KS_n$ is $(n+1)$-periodic rational cohomology:
$$\KS_n \cong H\Q[\iota^{\pm 1}].$$
\end{thm}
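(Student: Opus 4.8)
The plan is to analyze the homotopy type of $\KS_n = \Sigma^\infty K(\Z, n+1)_+[\iota^{-1}]$ by first understanding the homology of $K(\Z, n+1)$ and the effect of inverting $\iota$ on it, and then promoting this to a statement about the spectrum itself. The even case should be quick: when $n$ is even, $\iota \in \pi_{n+1}\Sigma^\infty K(\Z,n+1)_+$ is an odd-degree element of a graded-commutative ring, so $2\iota^2 = 0$; moreover, using the Hurewicz/homology computations (e.g.\ that $H_*(K(\Z,n+1);\Z)$ has its first torsion-related behavior dictated by the fact that $\iota^2$ maps to zero rationally when $n+1$ is odd, since the rational homology of an odd Eilenberg--MacLane space is exterior on the fundamental class), one sees $\iota$ is nilpotent after localizing at each prime, hence $\iota$ is nilpotent in $\pi_*\Sigma^\infty K(\Z,n+1)_+$, and inverting a nilpotent element kills the spectrum. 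This gives the first assertion.

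For $n$ odd and greater than $1$, the key input is that $H_*(K(\Z, n+1);\Q)$ is the polynomial ring $\Q[x]$ on the rational Hurewicz image $x$ of $\iota$ in degree $n+1$ (this is exactly the fact recalled in the proof of Theorem \ref{non_nilp_thm}). First I would compute $\KS_n$ rationally: rationalization is smashing, hence commutes with the filtered colimit defining the localization, so $(\KS_n)_\Q \simeq (\Sigma^\infty K(\Z,n+1)_+)_\Q[\iota^{-1}]$, whose homotopy is $\Q[x][x^{-1}] = \Q[x^{\pm 1}]$, i.e.\ $(n+1)$-periodic rational homology. The substance of the theorem is therefore that $\KS_n$ is \emph{already rational} when $n > 1$ — equivalently, that $\widetilde{H}_*(\KS_n; \Z/p) = 0$ for every prime $p$. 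The plan is to prove this by showing that multiplication by $\iota$ is eventually nilpotent on $\widetilde{H}_*(K(\Z,n+1);\Z/p)$, so that the colimit $\widetilde{H}_*(K(\Z,n+1);\Z/p)[\iota^{-1}]$ vanishes; since mod-$p$ homology commutes with filtered colimits, $\widetilde{H}_*(\KS_n;\Z/p) = 0$, and then $\KS_n$ is rational.

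The main obstacle — and the heart of Arthan's argument — is precisely this mod-$p$ nilpotence of the $\iota$-action, and the contrast with the case $n = 1$. When $n+1 = 2$, $K(\Z,2) = \C P^\infty$ and $\iota$ acts invertibly on $\widetilde{H}_*(\C P^\infty; \Z/p)$ in the appropriate periodic sense (this is Snaith's theorem). When $n+1 \geq 4$ is even, however, the mod-$p$ homology of $K(\Z, n+1)$ is a free graded-commutative algebra on admissible Steenrod operations applied to the fundamental class, and crucially $\iota$ itself squares to an element that is \emph{not} polynomial — indeed for $p$ odd the class $\iota$ has square detected by $\beta P^{?}$-type operations only in low degrees, and one shows that in the subalgebra generated by $\iota$ under the Pontryagin product, $\iota$ is nilpotent (its powers are eventually decomposable in terms of, or annihilated by, other generators). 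I would carry this out by invoking Cartan--Serre's description of $H_*(K(\Z, m);\Z/p)$ as a polynomial (for $m$ odd) tensor exterior (for $m$ even) algebra on admissible monomials, observing that for $m = n+1$ even the fundamental class sits in the \emph{exterior} part, so $\iota^2 = 0$ there — wait, that is too strong; more carefully, the Pontryagin square may be nonzero but lands in the image of the dual Steenrod operations, and one tracks degrees to see that $\iota^k = 0$ for $k$ large. This degree bookkeeping, prime by prime, is the routine-but-delicate computation I would attribute to Arthan \cite{arthan} rather than reproduce; granting it, the localization argument above closes the proof, and combining with the rational computation identifies $\KS_n \cong H\Q[\iota^{\pm 1}]$.
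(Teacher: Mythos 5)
First, note that the paper does not actually prove this statement: it is quoted from Arthan's paper \cite{arthan} (the theorem is labelled ``[Arthan]'' and the surrounding text only records the result), so there is no internal proof to compare against. Judged on its own terms, your sketch has a genuine gap at its central step. You reduce ``$\KS_n$ is rational'' to ``$\widetilde{H}_*(\KS_n;\Z/p)=0$ for every prime $p$,'' but that equivalence fails for spectra that are not bounded below, and $\KS_n$ is a telescope of desuspensions, hence not bounded below. The spectrum $KU=\KS_1$ is exactly the counterexample: $H_*(KU;\Z)\cong\Q[u^{\pm1}]$, so $H_*(KU;\F_p)=0$ for all $p$, yet $KU$ is not rational. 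For the same reason, the mod-$p$ computation you propose to delegate to Arthan cannot be where the dichotomy between $n=1$ and $n>1$ lives: on $H_*(\C P^\infty;\F_p)$ the Pontryagin product gives $\iota\cdot b_k=(k+1)b_{k+1}$, so multiplication by $\iota$ is eventually zero in the colimit and $\widetilde{H}_*(\C P^\infty;\F_p)[\iota^{-1}]=0$ as well — contrary to your claim that $\iota$ acts ``invertibly'' there. Thus even a complete verification of mod-$p$ homological nilpotence of $\iota$ would prove only $H\F_p\wedge\KS_n\simeq 0$, which is true for all $n\ge 1$ and does not yield the theorem.

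What is actually required is control of homotopy rather than homology: since $\KS_n$ is a homotopy ring spectrum, it is rational if and only if $\KS_n\wedge S/p\simeq 0$ for every $p$, i.e.\ if and only if $\iota$ becomes nilpotent after smashing $\Sigma^\infty K(\Z,n+1)_+$ with the mod-$p$ Moore spectrum; this ``nilpotence mod $p$'' for $n+1\ge 4$ is the real content of Arthan's theorem and is precisely what fails for $n+1=2$. Your rational computation $(\KS_n)_\Q\simeq H\Q[\iota^{\pm1}]$ is fine, and would finish the argument once rationality is established, but the bridge from the mod-$p$ homology statement to rationality is missing. A smaller issue: in the even case, graded-commutativity gives only $2\iota^2=0$, which yields nilpotence of $\iota$ after inverting $2$ but says nothing at the prime $2$, so that case also needs a separate argument.
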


Let $R$ be an $A_\infty$ ring spectrum which receives an $A_\infty$ ring map $\phi: \KS_n \to R$.  The case $R = \KA_n$ is our main example.  From $\phi$, we obtain a natural composite of maps of spaces
$$K(\Z, n+2) \to B\GL_1(\Sigma^\infty K(\Z, n+1)_+) \to B\GL_1(\KS_n) \to B\GL_1(R)\;,$$
and so, for a topological space $X$, we may form the twisted cohomology $R^*(X; H)$ associated to a class $[H] \in H^{n+2}(X)$.  Furthermore, since $\iota$ is invertible in $\KS_n$, the element $\phi(\iota)$ must be invertible in $R$.

\begin{defn} \label{period_defn}

We will call the data of the $A_\infty$ map $\phi: \KS_n \to R$ a \emph{periodic twisting of $R$ by $n$-gerbes}.  An equivalent (if less colorful) term is: $R$ is an 
{\it $\KS_n$-algebra spectrum}.

\end{defn}

The spectrum $R = \KS_n$ obviously admits the universal periodic twisting by $n$-gerbes.  Theorem \ref{non_nilp_thm} ensures that $\KA_n$ is another nontrivial example.  An immediate consequence of being an algebra spectrum over a rational algebra is the following:

\begin{prop} If $R$ admits a periodic twisting by $n$-gerbes for $n > 1$, then it is rational. \label{rat_prop} \end{prop}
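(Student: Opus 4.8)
The plan is to exploit the fact that a periodic twisting by $n$-gerbes equips $R$ with the structure of an $\KS_n$-algebra spectrum, together with Arthan's computation (Theorem \ref{arthan_thm}) that $\KS_n \cong H\Q[\iota^{\pm 1}]$ when $n>1$ is odd. (The case of $n$ even does not arise, since then $\KS_n$ is contractible, so an $A_\infty$ ring map $\KS_n \to R$ forces $R$ to be contractible and hence trivially rational; but in any case our twistings by $n$-gerbes were only defined for $n$ odd.) The key point is that $H\Q[\iota^{\pm 1}]$ is an $H\Q$-algebra: the unit $H\Q \to \KS_n$ composed with $\phi$ makes $R$ into an $H\Q$-algebra spectrum, and any module (in particular any algebra) over $H\Q$ is a rational spectrum.

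First I would record that $\KS_n = H\Q[\iota^{\pm 1}]$ is, as an $E_\infty$ (hence $A_\infty$) ring spectrum, naturally an algebra over $H\Q$: it is built from $H\Q$ by inverting the element $\iota \in \pi_{n+1}$, and localization of an $H\Q$-algebra is again an $H\Q$-algebra. Concretely, there is an $A_\infty$ (indeed $E_\infty$) ring map $H\Q \to \KS_n$ — the evident unit. Next, composing with the given periodic twisting $\phi\colon \KS_n \to R$ produces an $A_\infty$ ring map $H\Q \to R$. Thus $R$ is an $H\Q$-algebra spectrum; in particular its underlying spectrum is an $H\Q$-module spectrum. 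Finally, every $H\Q$-module spectrum is a rational spectrum: its homotopy groups are $\pi_* H\Q$-modules, hence $\Q$-vector spaces, so multiplication by any nonzero integer is an isomorphism on $\pi_* R$. This is precisely the statement that $R$ is rational.

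There is really no serious obstacle here — the proposition is an immediate formal consequence of Arthan's theorem once one observes that $\KS_n$ is an $H\Q$-algebra. The only point requiring a word of care is the passage from ``$\KS_n \cong H\Q[\iota^{\pm 1}]$ as a spectrum'' to ``$R$ is an $H\Q$-algebra via $\phi$'': one needs the equivalence of Theorem \ref{arthan_thm} to be realized by an $A_\infty$ (or $E_\infty$) map, so that composing with the $A_\infty$ ring map $\phi$ yields an $A_\infty$ ring map out of $H\Q$. Since $\KS_n$ is constructed from the $E_\infty$ ring spectrum $\Sigma^\infty K(\Z,n+1)_+$ by inverting the central element $\iota$, it is naturally $E_\infty$, and Arthan's identification respects this structure; so this subtlety is genuinely routine. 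I would phrase the argument to make explicit that the rationality of $R$ follows from $R$ being a module over the rational Eilenberg--MacLane spectrum $H\Q$.
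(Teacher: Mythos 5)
Your proposal is correct and matches the paper's (one-line) argument: $\phi$ makes $R$ an algebra over $\KS_n$, which by Arthan's theorem is rational for $n>1$, and any module over a rational ring spectrum is rational. The one subtlety you flag is even less of an issue than you suggest: one does not need the equivalence $\KS_n \simeq H\Q[\iota^{\pm 1}]$ to be multiplicative, since the additive statement already forces $\pi_*\KS_n$ to be a $\Q$-algebra, and $\pi_* R$ is a module over it via $\phi$.
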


\section{Chern characters, $\KA$-orientations, and Umkehr maps}
\label{Sec Chern}

Many approaches to T-duality involve cohomological expressions. In order to deduce T-duality isomorphisms in ordinary cohomology,
we investigate Chern characters and orientations associated with the spectra $\ka_n$ and $\KA_n$ introduced in \S\ref{sec:iterated_twistings_ku}. 
These are, of course, also interesting in their own right. 

\subsection{The Chern-Dold character}
\label{Sec Dold} 

The Chern character in complex topological K-theory of a manifold $X$ is a ring homomorphism 
from $KU^0(X)$ to even rational cohomology $H^{\rm even}(X; \Q)$
and from $KU^1(X)$ to odd rational cohomology $H^{\rm odd}(X; \Q)$. 
This periodicity can be encoded by
the homomorphism of $\Z$-graded rings 
$KU^{*} (X) \to H^*(X; \Q[u^{\pm 1}])$, 
where the latter is rational cohomology with 
coefficients in $KU^*({\rm pt}) \otimes \Q \cong \Q[u^{\pm 1}]$,
and $u^{-1}$ is the Bott generator. 
The Chern character may be regarded as a map of spectra 
$$\ch: KU \to H\Q[u^{\pm 1}],$$
where $H\Q[u^{\pm 1}]$ is 2-periodic rational cohomology.
The target is a priori formal power series over $\Q$, but becomes polynomials when 
evaluating on finite-dimensional manifolds.

\medskip
The above Chern character has a generalization to any generalized 
cohomology theory. 
The standard Chern-Dold character (see, e.g., ch. 14 of \cite{Dold}, by Dold) is based upon the Eilenberg-MacLane spectrum for the rationalization of the coefficients of our theory. Let $\ka_n^*({\rm pt})=\pi_*(\ka_n) = R^*=\sum_j R^j$.  Unfortunately, we have little understanding of an explicit formula for these coefficients.  Further, the results of \cite{ausoni} suggest that $R^*$ is most likely a very complicated ring even for $n=2$.  Nonetheless, we may make the following construction:

\begin{defn} 
The {\it Chern-Dold character for the theory} $\mathfrak{a}_n$ is the map of
cohomology theories
$$
\ch_{\mathfrak{a}_n}: \mathfrak{a}_n^* \to H^*(-; R^* \otimes \Q)
$$
induced by the rationalization map
$$\ch_{\ka_n}: \ka_n \to (\ka_n)_\Q \simeq H(R_*\otimes \Q),$$
where we identify the rationalization of $\ka_n$ with the generalized Eilenberg-MacLane spectrum associated to the graded ring $R_*\otimes \Q$.

\end{defn}

The following properties of the above character are evident from the definition and follow similarly 
to those of the standard Chern-Dold character (see \cite{Bu, Dold}). 

\begin{lem} (Properties of the Chern-Dold character)

\begin{enumerate}\itemsep0em
\item Over a point, $\ch_{\mathfrak{a}_n}$ is the canonical homomorphism $R^* \to R^* \otimes \Q$.
\item For a finite complex $X$ the homomorphism 
$$
\ch_{\mathfrak{a}_n} \otimes \Q: \mathfrak{a}_n^*(X) \otimes \Q \to H^*(X; R^* \otimes \Q)
$$
is an isomorphism.
\item $\ch_{\mathfrak{a}_n}$ is a ring homomorphism. 
\end{enumerate}
\end{lem}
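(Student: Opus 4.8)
The plan is to read off all three properties from the single structural fact that $\ch_{\ka_n}$ is, by construction, the rationalization map $\ka_n\to(\ka_n)_\Q$ followed by the chosen identification $(\ka_n)_\Q\simeq H(R_*\otimes\Q)$, together with the standard features of rationalization: it is smashing (given by $-\wedge S_\Q$, where $S_\Q$ is the rationalization of the sphere spectrum), it induces $\pi_*(-)\otimes\Q$ on homotopy groups, and it is symmetric monoidal (since $S_\Q\wedge S_\Q\simeq S_\Q$, so that $X_\Q\wedge Y_\Q\simeq(X\wedge Y)_\Q$).

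First I would dispatch (1): evaluating a cohomology theory at a point returns the homotopy groups of the representing spectrum, and the localization map $\ka_n\to(\ka_n)_\Q$ realizes the algebraic rationalization $\pi_*\ka_n\to\pi_*\ka_n\otimes\Q$ on homotopy, which under the identification with $H(R_*\otimes\Q)$ is precisely the canonical map $R^*\to R^*\otimes\Q$. For (2), I would use that $\Sigma^\infty X_+$ is dualizable for a finite complex $X$ and that rationalization is smashing, so that $F(\Sigma^\infty X_+,\ka_n)_\Q\simeq F(\Sigma^\infty X_+,(\ka_n)_\Q)$; passing to homotopy groups identifies the left side with $\ka_n^*(X)\otimes\Q$ and the right side with $((\ka_n)_\Q)^*(X)=\mathcal{H}^*(X;R^*\otimes\Q)$ --- the latter because $(\ka_n)_\Q$ is a generalized Eilenberg--MacLane spectrum --- and the resulting comparison map is exactly $\ch_{\ka_n}\otimes\Q$. (One could instead invoke the Atiyah--Hirzebruch spectral sequence, which collapses rationally, but the dualizability argument sidesteps convergence bookkeeping.)

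For (3), I would observe that the unit map $\ka_n\to(\ka_n)_\Q$ of the smashing localization is a map of $E_\infty$ ring spectra, so it induces a ring homomorphism on the represented cohomology theories; it then remains only to see that the equivalence $(\ka_n)_\Q\simeq H(R_*\otimes\Q)$ respects products. This is the statement that a rational ring spectrum is formal and recovered from its homotopy ring: the homotopy category of $S_\Q$-modules is symmetric monoidally equivalent to that of graded $\Q$-vector spaces (with $\wedge_{S_\Q}$ corresponding to $\otimes_\Q$), and under this equivalence $(\ka_n)_\Q$ with its multiplication goes to $R_*\otimes\Q$ with its Eilenberg--MacLane product. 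This last point --- the multiplicative upgrade of the formality equivalence --- is the only step I expect to require genuine care; everything else is a formal consequence of rationalization being a smashing, symmetric monoidal localization, and indeed the lemma is in essence a transcription of the classical Chern--Dold formalism with $KU$ (or $H\Z$) replaced by $\ka_n$.
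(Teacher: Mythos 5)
Your proposal is correct, and it in fact supplies more detail than the paper does: the paper offers no argument beyond asserting that the properties are ``evident from the definition'' and follow as for the classical Chern--Dold character, with a citation to Dold and Buchstaber. Your route --- rationalization is a smashing, symmetric monoidal localization inducing $\pi_*(-)\otimes\Q$, dualizability of $\Sigma^\infty X_+$ for finite $X$, and the multiplicative formality of rational spectra for part (3) --- is exactly the standard argument those references encode, and you correctly isolate the one step (that the equivalence $(\ka_n)_\Q \simeq H(R_*\otimes\Q)$ can be chosen compatibly with products) that requires genuine care.
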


\subsection{A (twisted) higher Chern character via $THH$}
\label{Sec higherChern}

Although we do not have a good understanding of the ring $R^*$ or its rationalization, there is a stand-in for the Chern-Dold character with target a recognizable generalized Eilenberg-MacLane spectrum.

\medskip
Let $n$ be odd, and consider the composite $\ch_n^{\geq 0}:$
$$\xymatrix@1{\ka_n = K^{(n-1)}(ku) \ar[drrr]_-{\ch_n^{\geq 0}} \ar[r]^-{tr^{n-1}} & THH^{(n-1)}(ku) \ar[r] & THH^{(n-1)}(ku_\Q) & THH^{(n-1)}((\Sigma^\infty \C P^\infty_+)_\Q) \ar[l]_-\simeq \ar[d]^-{ev} \\
 & & & (\Sigma^\infty K(\Z, n+1)_+)_{\Q}}$$
\noindent described in the proof of Theorem \ref{non_nilp_thm}; here $ev$ is the basepoint evaluation projection from $THH^{(n-1)}(\Sigma^\infty \C P^\infty_+) = \Sigma^\infty \Map(T^{n-1}, K(\Z, n+1))_+$ to $\Sigma^\infty K(\Z, n+1)_+$.  The target of this map is the generalized rational Eilenberg-MacLane ring spectrum $(\Sigma^\infty K(\Z, n+1)_+)_{\Q} \simeq H\Q[\beta_n]$ whose homotopy is the ring $\Q[\beta_n]$.

\begin{defn}

For $n$ odd, define the \emph{higher Chern character} $\ch_n: \KA_n \to H\Q[\beta_n^{\pm 1}]$ as the localization (at $\beta_n$) of the map $\ch_n^{\geq 0} \colon \ka_n \to H\Q[\beta_n]$.

\end{defn}

We note that when $n=1$, this is the usual Chern character, which may be alternatively described as the map $KU \to KU_{\Q} \simeq (\Sigma^\infty \C P^\infty_+[\beta^{\pm 1}])_{\Q}$ induced by rationalization and Snaith's (or Segal's) theorem.  More generally, $\ch_n$ factors the Chern-Dold character and evidently forgets some information regarding the rational homotopy of $\KA_n=\ka_n[\beta_n^{-1}]$:
$$\xymatrix{
\ka_n \ar[r]^-{\ch_{\ka_n}} \ar[d] & H(R_* \otimes \Q) \ar[d] \\
\KA_n \ar[r]_-{\ch_n} & H\Q[\beta_n^{\pm 1}]}$$
(the right vertical map is induced by $\ch_n^{\geq 0}$ in rational homotopy).  Nonetheless, it is useful on account of the fact that the coefficients of the codomain of $\ch_n$ are computed.  Furthermore, this map is highly structured:

\begin{prop}

The higher Chern character $\ch_n: \KA_n \to H\Q[\beta_n^{\pm 1}]$ is a map of $E_\infty$ ring spectra.  Further, the composite $\KS_n \to \KA_n \to H\Q[\beta_n^{\pm 1}]$ is an equivalence, and splits $H\Q[\beta_n^{\pm 1}]$ off of $\KA_n$.

\end{prop}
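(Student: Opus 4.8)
The plan is to treat the three claims in turn: the $E_\infty$-multiplicativity of $\ch_n$ by tracking the symmetric monoidal structure through its construction, and then the equivalence and splitting statements by combining the analysis already carried out in the proof of Theorem~\ref{non_nilp_thm} with Arthan's Theorem~\ref{arthan_thm}. For the first claim I would verify that every map in the composite defining $\ch_n^{\geq 0}$ is a map of $E_\infty$ ring spectra. The iterated Dennis trace $tr^{n-1}\colon\ka_n=K^{(n-1)}(ku)\to THH^{(n-1)}(ku)$ is built by iterating the trace $K(-)\to THH(-)$; since $K$ and $THH$ send $E_\infty$ ring spectra to $E_\infty$ ring spectra and the trace is multiplicative on $E_\infty$ inputs, each stage is $E_\infty$, hence so is $tr^{n-1}$. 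The next two maps are $THH^{(n-1)}$ applied to the $E_\infty$ rationalization $ku\to ku_\Q$ and to the rationalization of Segal's $E_\infty$ equivalence $\Sigma^\infty\C P^\infty_+\to ku$ (an $E_\infty$ equivalence has an $E_\infty$ inverse), so both are $E_\infty$. Finally, under the standard $E_\infty$ identification $THH^{(n-1)}(\Sigma^\infty\C P^\infty_+)\simeq\Sigma^\infty\Map(T^{n-1},K(\Z,n+1))_+$ (iterated cyclic bar construction together with $LBG\simeq BLG$ for connected $E_\infty$ spaces $G$), the map $ev$ becomes evaluation at the basepoint of $T^{n-1}$ for the pointwise $E_\infty$ structure, which is $E_\infty$ and is split by the inclusion of constant maps. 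Hence $\ch_n^{\geq 0}\colon\ka_n\to H\Q[\beta_n]$ is $E_\infty$; by the proof of Theorem~\ref{non_nilp_thm} it sends $\beta_n$ to the polynomial generator of $\pi_{n+1}H\Q[\beta_n]$, which is a unit in $H\Q[\beta_n^{\pm1}]$, so the universal property of the $E_\infty$ localization $\ka_n\to\KA_n=\ka_n[\beta_n^{-1}]$ yields a unique $E_\infty$ factorization $\ch_n\colon\KA_n\to H\Q[\beta_n^{\pm1}]$.

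For the remaining claims I would precompose $\KS_n\xarr{T_n}\KA_n\xarr{\ch_n}H\Q[\beta_n^{\pm1}]$ with the $\iota$-localization of $t_n$. Before inverting $\iota$ this is the composite $\ch_n^{\geq 0}\circ t_n\colon\Sigma^\infty K(\Z,n+1)_+\to H\Q[\beta_n]$, which by the Waldhausen-splitting argument in the proof of Theorem~\ref{non_nilp_thm} is precisely the rationalization map onto $(\Sigma^\infty K(\Z,n+1)_+)_\Q\simeq H\Q[\beta_n]$. Since rationalization is smashing it commutes with the filtered colimit inverting $\iota$, so after inverting $\iota$ the composite $\KS_n\to H\Q[\beta_n^{\pm1}]$ is identified with the rationalization map $\KS_n\to(\KS_n)_\Q$, where $(\KS_n)_\Q\simeq H\Q[\beta_n^{\pm1}]$. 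By Theorem~\ref{arthan_thm} the spectrum $\KS_n$ is already rational (here $n>1$, so that $\KS_n\simeq H\Q[\iota^{\pm1}]$; when $n=1$ this composite is the classical Chern character on $KU$ and is not an equivalence), so this rationalization map, and therefore $e:=\ch_n\circ T_n$, is an equivalence. Then $T_n\circ e^{-1}$ is a section of $\ch_n$, exhibiting $H\Q[\beta_n^{\pm1}]\simeq\KS_n$ as a retract, hence a wedge summand, of $\KA_n$.

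The main obstacle is the first claim: one must know that the iterated Dennis trace and the identification $THH(\Sigma^\infty G_+)\simeq\Sigma^\infty LBG_+$ with its basepoint evaluation are compatible with the $E_\infty$ structures, not merely with the underlying ring spectrum structures. Granting the standard multiplicativity of the trace on $E_\infty$ inputs and of the cyclic bar construction, the remaining assertions follow formally from the proof of Theorem~\ref{non_nilp_thm} and from Theorem~\ref{arthan_thm}.
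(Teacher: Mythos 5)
Your proposal is correct and follows essentially the same route as the paper: it verifies that each constituent of $\ch_n^{\geq 0}$ (iterated trace, rationalizations, the Segal equivalence, and $ev$) is a map of $E_\infty$ ring spectra (the paper cites \cite{BGT2} for the trace) and then passes to the localization at $\beta_n$, and it deduces the equivalence and splitting by combining the fact that $\ch_n^{\geq 0}\circ t_n$ is the rationalization map (established in the proof of Theorem \ref{non_nilp_thm}) with Arthan's Theorem \ref{arthan_thm}. Your parenthetical caveat about $n=1$ is a fair reading of the statement, which implicitly concerns odd $n>1$, where $\KS_n$ is already rational.
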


\begin{proof}
The work of \cite{BGT2} ensures that the Dennis trace is an $E_\infty$ map.  The original map\footnote{which need not agree with the connective cover of the map $\ch_n$!} $\ch_n^{\geq 0}: \ka_n \to  (\Sigma^\infty K(\Z, n+1)_+)_{\Q}$ in the definition of $\ch_n$ may then be seen to be $E_\infty$: it is a composite of iterates of $tr$, rationalizations, an equivalence induced by a map of ring spectra, and the map $ev$, which is easily seen to be $E_\infty$.  Finally $\ch_n$ is also $E_\infty$, being obtained from this by localization at $\beta_n$.

To obtain the second statement, it suffices to note that $\iota \in \pi_{n+1} \KA_n$ is carried to $\beta_n$, which was shown in the proof of Theorem \ref{non_nilp_thm}.

\end{proof}

Consequently $\ch_n$ induces a map of spectra $\ch_n: \gl_1(\KA_n) \to \gl_1(H\Q[\beta_n^{\pm 1}])$.  Further, since the composite 
$$\xymatrix@1{\Sigma^\infty K(\Z, n+1)_+ \ar[r]^-{t_n} & \ka_n \ar[r]^-{\ch_n^{\geq 0}} & (\Sigma^\infty K(\Z, n+1)_+)_{\Q}}$$
is the rationalization map, it follows that $\ch_n$ carries the twisting $t_n$ of $\KA_n$ by $H^{n+2}$ to the standard\footnote{Traditionally when considering twisted periodic cohomology, the period of the theory is 2, but of course higher degrees also arise (see \cite{S2}).
In this setting, the period is somewhat longer, since $\beta_n$ has dimension $n+1$.  Since $n$ is odd, the period is still even, so the two are naturally comparable.} twisting of periodic cohomology by $H^{n+2}$. 

\begin{cor}

For each $H \in H^{n+2}(X)$, the higher Chern character $\ch_n$ is a natural transformation of twisted cohomology theories:  
$$\ch_n: \KA_n^*(X; H) \to H\Q^*(X; H)[\beta_n^{\pm 1}]$$
which splits the target off of the source.

\end{cor}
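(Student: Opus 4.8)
\emph{Proof plan.} The statement is an instance of a general principle, so the plan is to isolate that principle and then feed in the inputs already in hand. The principle: a map of $E_\infty$ ring spectra $f\colon A\to B$ together with a homotopy $B\GL_1(f)\circ\omega_A\simeq\omega_B$ between twists $X\to B\GL_1 B$ induces a natural transformation $f_*\colon A^*(X;\omega_A)\to B^*(X;\omega_B)$ of twisted cohomology theories. To justify this I would start from the Thom-spectrum description of Section~\ref{Sec Gen}: the $\GL_1(B)$-fibration over $X$ classified by $\omega_B$ is obtained from the $\GL_1(A)$-fibration classified by $\omega_A$ by extension of structure group, so $\A_B(X;-\omega_B)\simeq B\wedge_A\A_A(X;-\omega_A)$; by the base-change adjunction $F_B(B\wedge_A\A_A(X;-\omega_A),B)\simeq F_A(\A_A(X;-\omega_A),B)$, and postcomposing with $f$ produces a map out of $F_A(\A_A(X;-\omega_A),A)$, hence on homotopy groups the map $f_*$. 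Naturality in $X$ and compatibility with the module structure over untwisted $A$-cohomology are immediate from functoriality of each construction.

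Next I would apply this with $A=\KA_n$, $B=H\Q[\beta_n^{\pm1}]$ and $f=\ch_n$. The Proposition preceding this Corollary says $\ch_n$ is a map of $E_\infty$ ring spectra, so it deloops to $B\GL_1(\ch_n)\colon B\GL_1\KA_n\to B\GL_1(H\Q[\beta_n^{\pm1}])$, and the paragraph just before the Corollary supplies exactly the homotopy $B\GL_1(\ch_n)\circ\tau_n\simeq(\text{standard twist by }H^{n+2})$. Taking $\omega_A=\tau_n\circ H$ then yields the natural transformation $\ch_n\colon\KA_n^*(X;H)\to H\Q^*(X;H)[\beta_n^{\pm1}]$.

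For the splitting I would use the second assertion of that same Proposition: the composite of ring maps $\KS_n\xrightarrow{T_n}\KA_n\xrightarrow{\ch_n}H\Q[\beta_n^{\pm1}]$ is an equivalence. Each map is compatible with the twist coming from $K(\Z,n+2)$: the $\KA_n$-twist is by construction the pushforward of the universal $\KS_n$-twist along $T_n$, and $\ch_n$ sends the former to the standard twist by the homotopy just used, so under Arthan's equivalence of Theorem~\ref{arthan_thm} the universal $\KS_n$-twist matches the standard one. Feeding this through the functoriality above, $(\ch_n)_*\circ(T_n)_*=(\ch_n\circ T_n)_*$ is an isomorphism of twisted cohomology groups, being induced by a twist-compatible equivalence of ring spectra; hence $(\ch_n)_*$ is a split epimorphism with splitting $(T_n)_*\circ\big((\ch_n\circ T_n)_*\big)^{-1}$, which is precisely the claim that $\ch_n$ splits $H\Q^*(X;H)[\beta_n^{\pm1}]$ off of $\KA_n^*(X;H)$.

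I expect the main obstacle to be coherence bookkeeping rather than anything conceptual: one must promote the homotopy-commutative triangle $K(\Z,n+2)\to B\GL_1\KA_n\to B\GL_1(H\Q[\beta_n^{\pm1}])$ to the coherent data that the Thom-spectrum formalism of \cite{ABG, ABG2, ABGHR} ingests, and pin down that the twist called ``standard'' there really is the image of the universal $\KS_n$-twist. Both are manageable here precisely because every spectrum map in sight is $E_\infty$ (or at least $A_\infty$) and that formalism is designed to accept homotopy-coherent input, so no computation beyond Theorem~\ref{non_nilp_thm}, Theorem~\ref{arthan_thm}, and the preceding Proposition is required.
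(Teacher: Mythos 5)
Your argument is correct and is essentially the paper's: the Corollary is stated as an immediate consequence of the preceding Proposition (that $\ch_n$ is an $E_\infty$ map splitting $H\Q[\beta_n^{\pm 1}]$ off of $\KA_n$ via $\KS_n$) together with the observation just before it that $\ch_n$ carries the twisting $t_n$ to the standard twisting of periodic rational cohomology. Your base-change/Thom-spectrum justification of the general functoriality principle is exactly the implicit content the paper relies on, so there is nothing to add.
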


\subsection{$\KA$-orientations and Umkehr maps}
\label{Sec Orient}

In order to construct the T-duality isomorphism in twisted $\KA_n$-theory via a Fourier-Mukai push-pull formula, we will need to have an Umkehr map in $\KA_n$-theory.  Here we set out a framework for constructing and analyzing the Umkehr map associated to a sphere bundle.  We will work with an arbitrary ring spectrum $R$ equipped with a periodic twisting of $n$-gerbes $\phi: \KS_n \to R$, but we have $R = \KA_n$ in mind throughout.

\medskip
Let $\pi \colon E \to X$ be a smooth fiber bundle with fiber the $q$-sphere $S^q$, and let $V \to E$ denote the vertical tangent bundle of the fiber bundle $\pi$.  
Consider the the unit map $\eta: {S} \to R$ from the sphere spectrum,
and the $J$-homomorphism $J: O \to {\rm GL}_1({S})$ to the 
space of units of ${S}$, with $BJ$ its delooping. 
Choose a map $V \colon E \arr BO$ that classifies the vertical tangent bundle and let
\[
\tau_{E} \colon \xymatrix@1{E \ar[r]^-{V} & BO \ar[r]^-{BJ} & B\GL_1({S}) \ar[rr]^-{B\GL_1(\eta)} & & B\GL_1(R)}
\]
be the twist of $R$-theory that it determines.  The structure of an $R$-orientation of $V$, i.e. a Thom class in $R$-theory for the Thom spectrum $E^{V}$, is equivalent to the datum of the homotopy class of a nullhomotopy of the map $\tau_{E}$.  
\begin{defn} 
We call $\tau_{E}$ {\it the orientation twist of} $E$, 
since it is the obstruction to the $R$-orientability of the vertical tangent bundle $V$.  
\end{defn}

\medskip
Associated to the sphere bundle $\pi \colon E \to X$ is a Pontrjagin-Thom collapse map $\pi^{!}: \Sigma^{\infty} X_{+} \to E^{-V}$.  Applying $R$-cohomology, we get a twisted umkehr map $\pi_!: R^*(E^{-V}) \to R^{*}(X)$.  Since the Thom spectrum associated to the twist $-\tau_{E}$ is the $R$-module spectrum $R \sma E^{-V}$, we may rewrite the twisted umkehr map as 
\[
\pi_{!} \colon R^*(E; \tau_{E}) \to R^*(X).
\]
More generally, if $\omega \colon X \to B \GL_1(R)$ is a twist of $R$ over $X$, then there is a product twist $\tau_{E} \otimes \pi^* \omega$ induced by the action of $B \GL_1 S$ on $B \GL_1 R$ coming from the $S$-algebra structure of $R$.  The Pontrjagin-Thom collapse map induces a twisted Umkehr map of the form
\[
\pi_{!} \colon R^*(E; \tau_{E} \otimes \pi^* \omega) \to R^*(X; \omega).
\]
In the presence of an orientation, we can recover the usual Umkehr map along $\pi$.

\begin{prop} 
A trivialization of $\tau_{E}$, or equivalently an $R$-orientation of $V$, determines an (untwisted) Umkehr map
\[
\pi_{!} \colon R^{*}(E) \cong R^{* - q}(E; \tau_{E}) \to R^{* - q}(X),
\]
defined as the composite of the resulting Thom isomorphism in $R$-theory for $E^{-V}$ with the twisted Umkehr map. 
\end{prop}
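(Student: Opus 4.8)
The plan is to isolate the only real content here --- a twisted Thom isomorphism $R^*(E) \cong R^{*-q}(E;\tau_E)$ for the virtual bundle $-V$ --- and then compose it with the twisted Umkehr map of the preceding construction, specialized to a trivial twist. All of the work is in producing the Thom isomorphism; the composition is formal.

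First I would unpack what an $R$-orientation of $V$ supplies. By the remark preceding this proposition, such an orientation is (the homotopy class of) a nullhomotopy of $\tau_E \colon E \to B\GL_1 R$, and this nullhomotopy trivializes the $\GL_1(R)$-fibration over $E$ classified by $\tau_E$. Passing to Thom spectra, the trivialization produces an equivalence of $R$-modules between the Thom spectrum of the twist $\tau_E$, which is $R \wedge E^V$, and the trivial bundle $\Sigma^q\bigl(R \wedge \Sigma^\infty E_+\bigr)$; the $q$-fold suspension records that the fibre of $\tau_E$ at $x \in E$ is the $R$-line $R \wedge S^{V_x} \simeq \Sigma^q R$. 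Negating the twist gives, correspondingly, an $R$-module equivalence between the Thom spectrum of $-\tau_E$, namely $R \wedge E^{-V}$, and $\Sigma^{-q}\bigl(R \wedge \Sigma^\infty E_+\bigr)$.

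Next I would feed this into the definition of twisted cohomology. By definition $R^*(E;\tau_E) = \pi_{-*}F_R\!\bigl(R \wedge E^{-V},\, R\bigr)$; substituting the equivalence above and using the adjunction $F_R(R \wedge Y, R) \simeq F(Y, R)$ together with $F(\Sigma^{-q}Y, R) \simeq \Sigma^q F(Y, R)$ yields a natural isomorphism $R^*(E;\tau_E) \cong \pi_{-*}\Sigma^q F(\Sigma^\infty E_+, R) = R^{*+q}(E)$, i.e. exactly the asserted Thom isomorphism $R^*(E) \cong R^{*-q}(E;\tau_E)$, canonically determined by the chosen orientation. Composing it with the twisted Umkehr map $\pi_! \colon R^{*-q}(E;\tau_E) \to R^{*-q}(X)$ --- the case $\omega = 0$ of $\pi_!\colon R^*(E;\tau_E \otimes \pi^*\omega) \to R^*(X;\omega)$, for which $\pi^*\omega$, and hence $\tau_E \otimes \pi^*\omega \simeq \tau_E$, are canonically trivial --- produces the untwisted map $\pi_!\colon R^*(E) \to R^{*-q}(X)$ of the statement; a comparison with the classical case ($R = KU$, $n=1$) confirms that this recovers the usual Umkehr map.

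The one place that needs care is the bookkeeping in these two steps: two sign conventions are simultaneously in play --- the inverse twist $-\tau_E$ in the definition of twisted cohomology and the virtual bundle $-V$ underlying $E^{-V}$ --- and one must check that the $q$-fold suspension emerges with the right sign so that the Thom isomorphism decreases degree by $q$ rather than increasing it. Concretely this reduces to verifying that the Thom class of $V$ extracted from the nullhomotopy of $\tau_E$ lies in $\widetilde{R}^{\,q}(E^V)$ and restricts to a generator on each fibre sphere $S^q$, which is precisely the ABGHR dictionary \cite{ABGHR} between $R$-orientations and nullhomotopies of the orientation twist. Once that is pinned down, naturality of $F_R(-, R)$ and of the Pontrjagin--Thom collapse make the composite well defined with no further input, and there is no genuine obstacle beyond this careful accounting.
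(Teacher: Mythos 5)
Your construction is correct and is exactly what the paper intends: the paper offers no separate proof, since the proposition is constructional and its statement already names the two ingredients (the Thom isomorphism for $E^{-V}$ induced by the nullhomotopy of $\tau_E$, composed with the twisted Umkehr map at trivial $\omega$), which you unpack accurately, including the degree bookkeeping $R^*(E;\tau_E)\cong R^{*+q}(E)$.
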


\begin{rem}  
If $R$ admits a periodic twisting by $n$-gerbes for $n = 1$, then $V$ is $R$-orientable if and only if it admits a $\spinc$ structure.  In the case of odd $n>1$, the spectrum $R$ is an $H\Q$-algebra (Prop. \ref{rat_prop}) and the virtual tangent bundle $V$ is $R$-orientable if and only if it is an orientable vector bundle in the usual sense.
\end{rem}

The twisted Umkehr map $\pi_{!}$ is natural in the variable $X$, meaning that if 
\[
\xymatrix{
E' \ar[r]^{\widetilde{f}} \ar[d]_{\pi'} & E \ar[d]^{\pi} \\
X' \ar[r]^{f} & X
}
\]
is a pullback diagram of bundles, then there is a commutative diagram
\begin{equation}\label{compatible_umkehr}
\xymatrix{
R^*(E'; \tau_{E'} \otimes \pi'^*f^*\omega) \ar[r]^-{\pi'_{!}} \ar[d]_{\widetilde{f}^*} & R^*(X'; f^* \omega) \ar[d]^{f^*} \\
R^*(E; \tau_{E} \otimes \pi^* \omega) \ar[r]^-{\pi_{!}} & R^*(X; \omega)
}
\end{equation}
relating the Umkehr maps along $\pi$ and $\pi'$ with the contravariant functoriality of the twisted cohomology theory $R^*(-; \omega)$.  Notice that we use the canonical equivalence of twists 
\[
\widetilde{f}^*(\tau_{E} \otimes \pi^* \omega) \simeq \widetilde{f}^*\tau_{E} \otimes \widetilde{f}^* \pi^* \omega \simeq \tau_{E'} \otimes \pi'^* f^* \omega
\]
to identify the target of $\widetilde{f}^*$ with the source of $\pi_{!}$.

\medskip
We will also need the following lemma on Mayer-Vietoris sequences
\begin{lem}\label{lemma:MV_sequences}
The Umkehr map $\pi_{!}$ is natural for the boundary operator $\delta$ in Mayer-Vietoris sequences for twisted $R$-theory, i.e. if $X = U \cup V$ is a decomposition of $X$ into open subsets, and we write $E_{U \cap V}$ for the restriction of the bundle $E$ to the open subset $U \cap V$, then the following diagram commutes
\[
\xymatrix{
R^*(E_{U \cap V}; \tau_{E_{U \cap V}} \otimes \pi^*\omega) \ar[d]_{\delta} \ar[r]^-{{\pi \mid_{U \cap V}}_{!}} & R^*(U \cap V; \omega) \ar[d]^{\delta} \\
R^{* + 1}(E; \tau_{E} \otimes \pi^*\omega) \ar[r]^-{\pi_{!}} & R^{* + 1}(X; \omega).
}
\]
\end{lem}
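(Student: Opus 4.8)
The plan is to realize both horizontal maps $\pi_!$ as the effect of a single \emph{map of homotopy cocartesian squares} of spectra, and then to invoke the standard fact that such a map induces a map of the resulting Mayer--Vietoris long exact sequences, commuting in particular with the connecting homomorphism $\delta$. To set this up, I would first record the behaviour of the Pontrjagin--Thom collapse under restriction to open subsets of the base. For $W \subseteq X$ open, the restricted sphere bundle $E_W := \pi^{-1}(W) \to W$ has vertical tangent bundle $V|_{E_W}$, so there are canonical identifications $\tau_{E_W} \cong \tau_E|_{E_W}$ and $\tau_{E_W} \otimes \pi_W^*(\omega|_W) \cong (\tau_E \otimes \pi^*\omega)|_{E_W}$, exactly the identifications used in diagram \eqref{compatible_umkehr}. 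Making the choices that define the collapse (a fibrewise embedding of $E$ into $X \times \R^N$, a fibrewise tubular neighbourhood, and the collapse onto its Thom space) once for $\pi \colon E \to X$ and restricting them over $W$ shows that $\pi^!_W \colon \Sigma^\infty W_+ \to E_W^{-V}$ is the restriction of $\pi^! \colon \Sigma^\infty X_+ \to E^{-V}$; equivalently, the square with these two collapse maps as rows and the canonical inclusions of suspension spectra and of Thom spectra as columns commutes up to coherent homotopy. Twisting by $\omega$ via the $B\GL_1(S)$-action on $B\GL_1(R)$, as in \eqref{compatible_umkehr}, upgrades this to a commuting square of the twisted Thom spectra (parametrized $R$-modules over $X$) out of which the twisted Umkehr maps $\pi_!$ are built.

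Next I would apply this to the open cover $\{U,V\}$ of $X$. Its preimages $E_U = \pi^{-1}(U)$ and $E_V = \pi^{-1}(V)$ form an open cover of $E$ with $E_U \cap E_V = E_{U\cap V}$, so both $X = U \cup V$ and $E = E_U \cup E_V$ are homotopy pushout squares. The twisted Thom spectrum is a homotopy colimit over the base, so it carries these pushout squares --- endowed, respectively, with the $\omega$-twist and the $(\tau_E \otimes \pi^*\omega)$-twist restricted to the pieces --- to two homotopy cocartesian squares of $R$-modules, and by the previous paragraph the twisted collapse maps assemble into a map from the first to the second. Applying $F_R(-,R)$ turns a homotopy cocartesian square of $R$-modules into a Mayer--Vietoris long exact sequence of twisted $R$-cohomology groups, and does so functorially, so our map of squares descends to a map of long exact sequences commuting with every map. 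For the $X$-square the connecting map is the Mayer--Vietoris boundary $\delta$ of $R^*(-;\omega)$ for $X = U \cup V$; for the $E$-square it is the boundary $\delta$ of $R^*(-;\tau_E \otimes \pi^*\omega)$ for $E = E_U \cup E_V$; and in each degree the map between the two sequences is the Umkehr map $\pi_!$ of the corresponding restriction of $\pi$. Extracting the square through $R^*(E_{U\cap V};\cdots)$, $R^*(U\cap V;\omega)$, $R^{*+1}(E;\cdots)$ and $R^{*+1}(X;\omega)$ is then precisely the commutative diagram of Lemma \ref{lemma:MV_sequences}.

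The step requiring genuine care is the first one: producing the collapse maps of the various restrictions as a \emph{coherent} map of cocartesian squares of twisted Thom spectra, rather than a diagram that merely commutes up to homotopy at each vertex. This can be arranged either by the point-set observation above --- all the geometric choices are made once on $\pi \colon E \to X$ and restricted simultaneously over $U$, $V$ and $U\cap V$ --- or by working with an $\infty$-categorical model of the parametrized Pontrjagin--Thom collapse, in which naturality in the base is automatic. Everything after that is formal, flowing from the universal property of homotopy pushouts and the long exact sequence they determine; the only mild bookkeeping is keeping track of the ``inverse twist'' sign convention used to pass from the homology-flavoured twisted Thom spectra to the twisted cohomology groups, which is already built into the definition of $A^q(X;\omega)$.
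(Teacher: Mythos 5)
Your proposal is correct and follows essentially the same route as the paper: both arguments rest on the twisted Thom spectrum functor preserving the homotopy pushout $X = U \cup_{U\cap V} V$ (excision) together with the compatibility of the Pontrjagin--Thom collapse with restriction to open subsets of the base, which yields a map between the two Mayer--Vietoris data and hence commutation with $\delta$. The paper packages this as a morphism of cofiber sequences with an equivalence on cofibers rather than as a map of homotopy cocartesian squares, but this is only a difference of presentation.
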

\begin{proof}
We first construct the Mayer-Vietoris boundary operator $\delta$ in $R$-theory twisted by $\omega$.  We write $X^{-\omega}$ and $U^{-\omega}$, for the $R$-module Thom spectra associated to the inverse of the twist $\omega \colon X \to B \GL_1(R)$ and to the restriction of $\omega$ to $U$, where the latter is a slight abuse of notation.    The functoriality of the Thom spectrum functor gives the commutative square of spectra on the left, which we extend to a morphism of cofiber sequences
\[
\xymatrix{
(U \cap V)^{-\omega} \ar[r] \ar[d] & U^{-\omega} \ar[d] \ar[r]^-{c_1} & C_1 \ar@{-->}[d]^{\phi} \ar[r]^-{\partial} & \Sigma (U \cap V)^{-\omega} \ar[d] \\
V^{-\omega} \ar[r] & X^{- \omega} \ar[r]^-{c_2} & C_2 \ar[r]^-{\partial} & \Sigma V^{- \omega}
}
\]
The induced morphism $\phi$ on cofibers is an equivalence by the excision property of the parametrized cohomology theory associated to the parametrized $R$-module spectrum over $X$ determined by $\omega$ \cite[\S20]{MS}.  Another point of view is that the Thom spectrum functor $(-)^{-\omega}$ preserves homotopy pushouts \cite[3.13]{ABGHR2}\footnote{The cited work uses an $\infty$-categorical model for the Thom spectrum functor and so the relevant result is expressed using the language of colimits in $\infty$-categories. It is a consequence of their work and the identification of homotopy colimits with $\infty$-colimits that any model for the Thom spectrum functor preserves homotopy colimits.}, such as $X = U \cup_{U \cap V} V$, and thus the induced map on cofibers is an equivalence.  The Mayer-Vietoris sequence boundary operator $\delta$ is defined to be the composite $\delta = c_2^* \circ (\phi^{-1})^* \circ \partial^*$ of the induced maps in $R$-cohomology, where $\phi^{-1}$ is a homotopy inverse to the equivalence $\phi$.  

There is a similar diagram defining the Mayer-Vietoris boundary operator associated to the decomposition $E = E_{U} \cup_{E_{U \cap V}} E_{V}$ and the naturality of the Pontrjagin-Thom construction constructs a commutative square of cofiber sequences relating the displayed diagram to the one for $E$.  This gives the desired relation between $\delta$ and the twisted Umkehr map $\pi_{!}$.
\end{proof}

\section{T-duality for sphere bundles in theories twisted by $n$-gerbes} 
\label{Sec T}

In this section, we define T-duality for sphere bundles and prove the T-duality isomorphism for cohomology theories admitting periodic twisting by $n$-gerbes.  The material closely follows that of \cite{BS} in the setting of T-duality for circle bundles.  We will also prove that when $n>1$, any cohomology theory for which such an isomorphism holds must be rational.

\subsection{Formulation of T-duality for sphere bundles}

Let $X$ be a topological space, and $\pi:E \to X$ and $\pihat: \Ehat \to X$ be $S^q$-bundles over $X$.  For simplicity, we work with smooth fiber bundles, but the arguments can be adapted to the case of topological fiber bundles.  Note that we do not require the sphere bundles to be orientable.  In the analysis of classifying spaces in \S\ref{Sec pairs}, we will work with a chosen structure group for our bundles, but for now that choice is not revelant.  

Define $E *_X \Ehat$ to be the \emph{fiberwise join} of $E$ and $\Ehat$ over $X$.  Note that this is a bundle with fiber the join $S^q * S^q = S^{2q+1}$, and that there are natural fiberwise embeddings
$$\begin{array}{ccc}
i: E \hookrightarrow E *_X \Ehat & {\rm and} & \ihat: \Ehat \hookrightarrow E *_X \Ehat
\end{array}$$
given by inclusion of each factor in the join.

\begin{defn}

A \emph{bundle Thom class}\footnote{This is simply called a \emph{Thom class} in \cite{BS}; this usage conflicts with the usual notion of a Thom class as a generator $\theta$ for the cohomology of a Thom space.  The two notions are closely related, since $\theta$ is carried to $\Th$ under the collapse map from a sectioned sphere bundle to its Thom space.} for a $S^m$-bundle $p: Y \to X$ is an element $\Th \in H^{m}(Y; \Z)$ with the property that its restriction to each fiber is a generator of $H^{m} (S^{m}; \Z)$.  Equivalently, $p_!(\Th) = \pm1 \in H^0(X; \Z)$.

\end{defn} 

We let $n=2q-1$, which is evidently odd, and consider representatives 
$$H \colon E \to K(\Z, n + 2) \quad \mbox{ and } \quad \Hhat \colon \Ehat \to K(\Z, n + 2)$$ 
for cohomology classes
$$[H] \in H^{n+2}(E; \Z) \quad \mbox{ and } \quad [\Hhat] \in H^{n+2}(\Ehat; \Z).$$

\begin{defn} \label{t-dual_defn}

We say that the pairs $(E, H)$ and $(\Ehat, \Hhat)$ are \emph{T-dual} if there exists a bundle Thom class $\Th \in H^{n+2}(E *_X \Ehat)$ with $i^* \Th = [H]$ and $\ihat^* \Th = [\Hhat]$. 

\end{defn}

Consider the \emph{correspondence space} $E \times_X \Ehat$, which is an $S^q \times S^q$-bundle over $X$:
$$\xymatrix{
 & E \times_X \Ehat \ar[dl]_-{p} \ar[dr]^-{\phat} & \\
E \ar[dr]_-{\pi} & & \Ehat\;. \ar[dl]^-{\pihat} \\ 
 & X &
}$$
There is a tautological homotopy $h: I \times (E \times_X \Ehat) \to E *_X \Ehat$ from $i \circ p$ to $\ihat \circ \phat$ that is given by the formula $h_t(e, \ehat) = (t, e, \ehat)$ and recognizes the fiberwise join $E *_X \Ehat$ as a quotient of $I \times (E \times_X \Ehat)$.

\medskip
Pulling the twisting classes back over $p$ and $\phat$ gives cohomology classes $p^* [H]$ and $\phat^* [\Hhat]$,
respectively, on the correspondence space.  But since $i^* \Th = [H]$ and $\ihat^* \Th = [\Hhat]$, we have a homotopy between the maps representing $p^* [H]$ and $\phat^* [\Hhat]$:
$$\Lambda = \Th \circ h: H \circ p = \Th \circ i \circ p \to  \Th \circ \ihat \circ \phat =  \Hhat \circ \phat.$$
The T-duality transformation $T$ is the composite
\[
\xymatrix{
R^*(E; \tau_{E} \otimes H) \ar[r]^-{p^*} & R^*(E \times_{X} \Ehat; p^*\tau_{E} \otimes p^* H) \ar[d]^-{\Lambda}_-{\cong} & \\
& R^*(E \times_{X} \Ehat; \tau_{E \times_{X} \Ehat} \otimes \phat^*\Hhat) \ar[r]^-{\phat_{!}} &R^*(\Ehat; \Hhat),
}
\]
where the middle isomorphism is induced by the homotopy $\Lambda$ and the canonical isomorphism of the pullback along $p$ of the vertical tangent bundle of $\pi \colon E \to X$ with the vertical tangent bundle of $\phat \colon E \times_{X} \Ehat \to \Ehat$.  We may now state our main theorem.

\begin{thm} \label{t_dual_thm}

Let $R$ be an $A_\infty$ ring spectrum, and let $\phi: \KS_n \to R$ be a periodic twisting of $R$ by $n$-gerbes.  Suppose that $X$ is homotopy equivalent to a finite CW complex, that $(E, H)$ and $(\Ehat, \Hhat)$ are T-dual, and write $\tau_E: X \to B\GL_1(R)$ for the orientation twist of the vertical tangent bundle of $E$.  Then the T-duality transformation 
$$T:= \phat_! \circ \Lambda \circ p^*: R^*(E; \tau_E \otimes H) \to R^{*}(\Ehat; \Hhat)$$
is an isomorphism.

\end{thm}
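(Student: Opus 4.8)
The plan is to reduce the general statement to the case where $X$ is a point, and then to a direct computation, by a standard Mayer--Vietoris patching argument. First I would establish the claim over a contractible base: if $X = \ast$, then $E = S^q$, $\Ehat = S^q$, the bundle Thom class $\Th$ on $E *_X \Ehat = S^{2q+1}$ has degree $n+2 = 2q+1$, and so $i^*\Th$ generates $H^{2q+1}(S^q)$, which forces $[H] = 0$ (and likewise $[\Hhat] = 0$) for degree reasons. Thus over a point the twists $H$ and $\Hhat$ are trivial, and $T$ becomes the composite of $p^\ast \colon R^\ast(S^q; \tau_E) \to R^\ast(S^q \times S^q; p^\ast\tau_E)$ with a twisted Umkehr map $\phat_!$. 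Here I would use that $R$ is periodic, in the sense that $\phi(\iota) = \phi(\beta_n) \in \pi_{n+1}R = \pi_{2q}R$ is invertible: the twisted cohomology $R^\ast(S^q; \tau_E)$ is, by the definition of $\tau_E$ as the orientation twist, isomorphic (after choosing a generator of $H^q(S^q)$, which exists after possibly a double cover but the twist only records orientation data) to $R^{\ast-q}$ concentrated appropriately, and the push-pull $\phat_! \circ p^\ast$ along the two projections $S^q\times S^q \to S^q$ is, up to the unit $\pm 1 = \phat_!(\Th_{\mathrm{fib}})$ coming from the bundle Thom class, an isomorphism $R^{\ast-q} \xrightarrow{\sim} R^\ast$. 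Concretely, the base-point section and the fibre class give a splitting $R^\ast(S^q\times S^q) \cong R^\ast \oplus R^{\ast-q}\oplus R^{\ast-q}\oplus R^{\ast-2q}$, and one checks that $\phat_! p^\ast$ sends the orientation class of $E$ to the orientation class of $\Ehat$ plus lower terms, hence is an isomorphism because $\beta_n$ is invertible.

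Next I would globalize. Assume inductively that $T$ is an isomorphism for all $S^q$-bundles over bases built from fewer cells, or more precisely run an induction over a finite good open cover $X = U_1 \cup \dots \cup U_k$ with all finite intersections contractible. The functoriality of the whole construction under pullback is exactly Diagram~\eqref{compatible_umkehr} for the Umkehr map, together with the evident naturality of $p^\ast$ and of the homotopy $\Lambda$ (both $H\circ p$ and $\Th\circ h$ pull back compatibly, since $\Th$, $h$, and the fibrewise embeddings are all defined over $X$). Lemma~\ref{lemma:MV_sequences} gives compatibility of $\phat_!$ with Mayer--Vietoris boundary maps, and $p^\ast$ and $\Lambda$ are likewise compatible with $\delta$ (the homotopy $\Lambda$ is natural in $X$, so it commutes with restriction to opens and hence with $\delta$). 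Therefore, for a decomposition $X = U \cup V$, the transformation $T$ induces a map of Mayer--Vietoris long exact sequences relating the sequence for $E$ over $X$ to those over $U$, $V$, and $U\cap V$. By the five lemma, if $T$ is an isomorphism over $U$, $V$, and $U\cap V$, it is an isomorphism over $X$. Starting from the contractible pieces (where $T$ is an isomorphism by the point computation above, using that over a contractible base $E \to X$ is trivial and $H$, $\Hhat$ are nullhomotopic) and inducting on the number of sets in the cover, we conclude that $T$ is an isomorphism over all of $X$. The hypothesis that $X$ is homotopy equivalent to a finite CW complex guarantees such a finite cover exists (and lets us replace $X$ by a smooth manifold, or argue simplicially).

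The main obstacle I expect is the base case bookkeeping: verifying that $\phat_! \circ p^\ast$ really is an isomorphism over a point requires correctly tracking the twist $\tau_E$ and the various Thom/Euler classes on $S^q \times S^q$, and in particular checking that the ``off-diagonal'' Fourier--Mukai term contributed by the bundle Thom class $\Th$ is the one that is inverted by $\beta_n^{-1}$ rather than something that vanishes. One has to be careful that the identification $\Lambda$ of twists — which combines the homotopy $\Th\circ h$ with the canonical identification of vertical tangent bundles — is set up so that on the correspondence space $S^q\times S^q$ the composite lands in the summand on which multiplication by the (invertible) image of $\beta_n$ implements the periodicity isomorphism. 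Once the point case is pinned down, the Mayer--Vietoris induction is essentially formal given Lemma~\ref{lemma:MV_sequences} and the naturality diagram~\eqref{compatible_umkehr}; the only mild subtlety there is ensuring all the twists in sight restrict compatibly, which follows from the fact that $\Th$, the embeddings $i, \ihat$, and the homotopy $h$ are globally defined over $X$ and hence restrict to the corresponding data over each open set in the cover.
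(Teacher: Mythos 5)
Your proposal is correct and follows essentially the same route as the paper: a base case over a point in which invertibility of $\beta = \phi_*(\iota)$ makes $\phat_!\circ\Lambda\circ p^*$ an isomorphism on $R^*[x,\xhat]/(x^2,\xhat^2)$, followed by naturality under pullback, compatibility with Mayer--Vietoris boundary maps, and an induction over a finite decomposition of $X$ via the five lemma. The ``main obstacle'' you flag is exactly the content of the paper's base-case lemma: one shows that $\Lambda$, being classified by the composite $\Sigma(E\times\Ehat)\to E*\Ehat = S^{n+2}\xrightarrow{\Th}K(\Z,n+2)$ (a generator of $H^{n+2}\Sigma(E\times\Ehat)$), acts by multiplication by $1\pm\beta\, x\xhat$, so that $T(1)=\pm\beta\xhat$ and $T(x)=1$.
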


Recall that our main examples of such an $R$ are $\KS_n$ itself (Definition \ref{def S}), 
and $\KA_n$ (Definition \ref{def U}).  

\subsection{Proof of Theorem \ref{t_dual_thm}}

We will establish Theorem \ref{t_dual_thm} through a series of lemmata. 

\begin{lem}
\label{lemma 1}
Theorem \ref{t_dual_thm} holds when $X$ is a point.

\end{lem}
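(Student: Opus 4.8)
When $X$ is a point, $E=\Ehat=S^q$, the fiberwise join $E*_X\Ehat$ is $S^q*S^q\cong S^{2q+1}$, the correspondence space is $S^q\times S^q$, and $n+2=2q+1$. The plan is to trivialize every twist occurring in $T=\phat_!\circ\Lambda\circ p^*$, thereby rewriting $T$ as a genuine Fourier--Mukai push--pull on $S^q\times S^q$ with an explicitly computable kernel, and then to finish by a K\"unneth calculation. Throughout, $R$ is an $\KS_n$-algebra, so $\beta_n\in\pi_{n+1}R=\pi_{2q}R$ is a unit.

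First I would trivialize the twists. Since $2q+1>q$, the groups $H^{2q+1}(S^q;\Z)$ and $H^{2q}(S^q;\Z)$ vanish, so $[H]=[\Hhat]=0$ and the representatives $H=\Th\circ i$, $\Hhat=\Th\circ\ihat$ are nullhomotopic, with a nullhomotopy that is unique up to homotopy; fix one for each. As $S^q$ is stably parallelizable, the Thom spectrum $(S^q)^{-TS^q}$ is, up to a choice of stable framing of $TS^q$, equivalent to $\Sigma^{-q}\Sigma^\infty S^q_+$; fixing such a framing trivializes the orientation twist $\tau_E$ as well as the vertical tangent twist of $\phat$, which is canonically $p^*\tau_E$. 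Under these choices $R^*(E;\tau_E\otimes H)$ becomes $R^{*+q}(S^q)$ --- a free $R^*$-module of rank two --- while $R^*(\Ehat;\Hhat)$ becomes $R^*(S^q)=R^*\{1,\sigma\}$ with $|\sigma|=q$, and $T$ becomes the composite of $p^*\colon R^{*+q}(S^q)\to R^{*+q}(S^q\times S^q)$, an $R^*$-linear automorphism $\Lambda'$ of $R^{*+q}(S^q\times S^q)$ coming from the homotopy $\Lambda=\Th\circ h$ together with the chosen trivializations, and the umkehr $\phat_!$ of the oriented trivial bundle $\phat$ (this last map lowering degree by $q$).

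The crux is to identify $\Lambda'$. A homotopy between two twisting maps into $B\GL_1R$ acts on the associated twisted $R$-cohomology by multiplication by the unit of $R^0$ classified by the corresponding loop in $\GL_1R$; here that loop is the image, under $\Omega$ of the $n$-gerbe twisting $K(\Z,n+2)\to B\GL_1R$ --- equivalently under the map $K(\Z,n+1)\to\GL_1R$ carrying the fundamental class to $\beta_n$ --- of the loop $\ell$ at the constant map $S^q\times S^q\to K(\Z,n+2)$ obtained by concatenating the chosen nullhomotopy of $H\circ p$, the homotopy $\Lambda$, and the reversed nullhomotopy of $\Hhat\circ\phat$. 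The presentation of $h$ as a quotient map $I\times(S^q\times S^q)\twoheadrightarrow S^q*S^q$ exhibits $S^q*S^q$ as the homotopy pushout (double mapping cylinder) of $S^q\xleftarrow{p}S^q\times S^q\xrightarrow{\phat}S^q$. In the Mayer--Vietoris sequence of this pushout the neighbouring groups $H^{2q}(S^q;\Z)$ and $H^{2q+1}(S^q;\Z)$ vanish, so the boundary $\delta\colon H^{2q}(S^q\times S^q;\Z)\xrightarrow{\cong}H^{2q+1}(S^q*S^q;\Z)$ is an isomorphism, and $[\ell]\in H^{2q}(S^q\times S^q;\Z)$ equals $\delta^{-1}(\Th)$ up to sign --- this is the homotopical avatar of the fact that the Mayer--Vietoris boundary of a class is its gluing cocycle, the nullhomotopies not mattering because they are unique. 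Since $\Th$ is a \emph{bundle} Thom class, i.e.\ a generator of $H^{2q+1}(S^{2q+1};\Z)$, the class $[\ell]$ is a generator of $H^{2q}(S^q\times S^q;\Z)\cong\Z$, namely $\pm\,\sigma\boxtimes\sigma$. As $\sigma\boxtimes\sigma$ is detected on the top cell $S^{2q}\subseteq S^q\times S^q$ and $K(\Z,n+1)\to\GL_1R$ sends the fundamental class to $\beta_n$, the automorphism $\Lambda'$ is multiplication by the unit $[L]:=1\pm\beta_n\cdot(\sigma\boxtimes\sigma)\in R^0(S^q\times S^q)^\times$ (it is a unit since $(\sigma\boxtimes\sigma)^2=0$).

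Finally I would run the K\"unneth computation. Write $R^*(S^q\times S^q)=R^*\{1,u,v,uv\}$ with $u=p^*\sigma$, $v=\phat^*\sigma$; then $u^2=v^2=0$ because the reduced diagonal of $S^q$ is nullhomotopic for $q\ge1$. For the oriented trivial bundle $\phat$ one has $\phat_!(1)=0$ --- equivalently $\phat_!\circ\phat^*=0$: the fiber integral of $1$ already vanishes over $\KS_n$ (where $\pi_q\KS_n=0$ by Theorem \ref{arthan_thm} for $q>1$) and over $KU=\KS_1$ (where $\pi_1KU=0$) --- while $\phat_!(u)=1$ and $\phat_!(uv)=\phat_!(u)\cdot\sigma=\sigma$ by the projection formula. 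Hence $T(1)=\phat_!([L])=\pm\beta_n\sigma$ and $T(\sigma)=\phat_!([L]\cdot u)=\phat_!(u)=1$, so $T$ sends the standard $R^*$-basis $\{1,\sigma\}$ of the source onto $\{\pm\beta_n\sigma,1\}$, which is again an $R^*$-basis of $R^*(\Ehat)$ because $\beta_n$ is invertible; thus $T$ is an isomorphism. I expect the main obstacle to be the third step: making precise and then computing how the bundle Thom class $\Th$ on the fiberwise join produces the Fourier--Mukai kernel $1\pm\beta_n(\sigma\boxtimes\sigma)$, where the action of homotopies of twistings, the looped $n$-gerbe twisting (hence the appearance of the invertible $\beta_n$), and the homotopy-pushout description of $S^q*S^q$ all come together. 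The rest is bookkeeping with the splitting $R^*(S^q)=R^*\oplus R^{*-q}$ and the projection formula.
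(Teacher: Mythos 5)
Your proposal is correct and follows essentially the same route as the paper's proof: trivialize all twists over a point, identify the action of the homotopy $\Lambda$ with multiplication by the unit $1\pm\beta_n\,x\xhat$ via the collapse $\Sigma(S^q\times S^q)\to S^q* S^q$ (your Mayer--Vietoris boundary is the same map), and finish with the K\"unneth/umkehr computation $1\mapsto\pm\beta_n\xhat$, $x\mapsto 1$. Your justifications of $\phat_!(1)=0$ (via $\pi_q\KS_n=0$) and of the trivialization of $\tau_E$ (via a framing of $S^q$) are slightly more explicit than the paper's, but the argument is the same.
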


\begin{proof}

In this case, the Atiyah-Hirzebruch spectral sequence for the $R$-cohomology of $E \times \Ehat = S^q \times S^q$ contains $R^*(E)$ and $R^*(\Ehat)$ as direct summands on every page, and thus cannot have any non-zero differentials.  This gives an isomorphism of rings
$$R^*(E \times \Ehat) \cong R^*[x, \xhat]/(x^2, \xhat^2),$$
where $x$ and $\xhat$ are represented by singular cohomology classes of dimension $q$.  Write 
$\beta = \phi_*(\iota) \in \pi_{n+1}(R)$ for the periodicity class; when $R = \KA_n$, $\beta = \beta_n$.  We claim that $\Lambda$ is given by multiplication by the degree 0 class $1\pm \beta x \xhat$, where we are regarding $\beta$ as a $-2q = -(n+1)$-dimensional cohomology class via the identification $R^*(\mathrm{pt}) = \pi_{-*} R$.

Granting that, the result follows: $\phat_! \circ \Lambda \circ p^*(1) = \phat_!(1\pm \beta x \xhat) = \pm \beta \xhat$, and 
$$\phat_! \circ \Lambda \circ p^*(x) = \phat_![(1\pm \beta x \xhat)x] = \phat_!(x) = 1$$
since the Umkehr map $\phat_!$ is given by division by the class $x$ if possible, and is 0 otherwise. 

To see the claim, we first recall that for a trivial cohomology class $\alpha = 0 \in H^m(Y; \Z)$, represented by a map $\alpha: Y \to K(\Z, m)$, the set of homotopy classes of nullhomotopies of $\alpha$ is a torsor for 
$$[Y, \Omega K(\Z, m)] = H^{m-1}(Y; \Z) = H^m(\Sigma Y, \Z),$$
by concatenation of homotopies.  In particular, it is in bijection with this group.

We note that since $H^{n+2}(E) = 0 = H^{n+2}(\Ehat)$, we have $H = \Hhat = 0$.  So $p^*H$ and $\phat^* \Hhat$ are necessarily null.  Further, since $H^{n+1}(E) = 0 = H^{n+1}(\Ehat)$, there is a unique homotopy class of trivialization (nullhomotopy) of $H$ and $\Hhat$, yielding preferred nullhomotopies of $p^* H$ and $\phat^* \Hhat$.  Note that the nullhomotopies of the pulled back classes are not unique, since the group $H^{n+1}(E\times \Ehat) =\Z$ affords many possibilities.  

Indeed, the homotopy $\Lambda$ of Theorem \ref{t_dual_thm} is a nontrivial homotopy between the trivialized twists $p^* H$ and $\phat^* \Hhat$.  We may therefore regard it as being represented through the action of an element of the group $H^{n+1}(E \times \Ehat) \cong H^{n+2}\Sigma (E \times \Ehat)$ on the trivial twist.  But by construction, $\Lambda$ is given as a composite
$$\xymatrix@1{\Sigma(E \times \Ehat) \ar[r] & E * \Ehat = S^{n+2} \ar[r]^-{\Th} & K(\Z, n+2).}$$
Here the first map arises from the tautological homotopy $I \times E \times \Ehat \to E * \Ehat$; the factorization through $\Sigma(E \times \Ehat)$ uses the fact that the inclusions of $E$ and $\Ehat$ into $E * \Ehat$ are nullhomotopic.  In particular, this map induces an isomorphism in $H^{n+2}$.  By assumption, this composite is a generator of $H^{n+2}\Sigma (E \times \Ehat)$.  The map that $\Lambda$ induces in $R^*$ is therefore given by multiplication by the invertible class
$$
\xymatrix@1{E \times \Ehat \ar[rr] \ar@{.>}[dr] & & \Omega S^{n+2} \ar[rr]^-{\Omega \Th} && K(\Z, n+1) \ar[rr]^-{\GL_1 \phi} && \GL_1 R \\
 & S^{n+1} \ar@{.>}[ur] \ar@{.>}[urrr]^-{\pm \iota} \ar@{.>}[urrrrr]_-{1 \pm \beta} & & & & & }.
 $$
The result follows, since the top-dimensional cohomology class of $S^{n+1}$ pulls back to $x \xhat$ on $E \times \Ehat$.

\end{proof}

We now establish the naturality properties of the T-duality transformation that we need to prove Theorem \ref{t_dual_thm} by cellular induction.

\begin{lem} 
\label{lemma 2}
Let $f:Y \to X$ be a continuous map, and define $F: f^*E \to E$ and $\Fhat: f^* \Ehat \to \Ehat$ to be induced maps from the pullbacks of $E$ and $\Ehat$ along $f$.  Then the T-duality map for the bundles $E$ and $\Ehat$ on $X$ pulls back over $f$; that is, the following diagram commutes:
$$\xymatrix{
R^*(E, \tau_E \otimes H) \ar[r]^-T \ar[d]_-{F^*} & R^{*}(\Ehat, \Hhat) \ar[d]^-{\Fhat^*}\\
R^*(f^*E, F^*(\tau_E \otimes H)) \ar[r]^-T & R^{*}(f^*\Ehat, \Fhat^*\Hhat).
}$$

\end{lem}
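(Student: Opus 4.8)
The plan is to verify compatibility for each of the three constituent maps $p^*$, $\Lambda$, and $\phat_!$ separately, using that the correspondence space, the fiberwise join, and the tautological homotopy are all built functorially from $E$ and $\Ehat$ over $X$. First I would observe that pulling $E$ and $\Ehat$ back along $f$ produces $f^*(E\times_X\Ehat) = (f^*E)\times_Y(f^*\Ehat)$ and $f^*(E*_X\Ehat) = (f^*E)*_Y(f^*\Ehat)$, with the embeddings $i,\ihat$, the projections $p,\phat$, and the tautological homotopy $h$ all pulling back to the corresponding data for the bundles over $Y$. Consequently the bundle Thom class $\Th\in H^{n+2}(E*_X\Ehat)$ pulls back to a bundle Thom class for $(f^*E)*_Y(f^*\Ehat)$ witnessing that $(f^*E, F^*H)$ and $(f^*\Ehat,\Fhat^*\Hhat)$ are again T-dual, so the lower $T$ in the diagram is defined.

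Next I would check the three squares. For $p^*$: the square
\[
\xymatrix{
R^*(E;\tau_E\otimes H)\ar[r]^-{p^*}\ar[d]_-{F^*} & R^*(E\times_X\Ehat; p^*\tau_E\otimes p^*H)\ar[d]^-{\widetilde F^*}\\
R^*(f^*E; F^*(\tau_E\otimes H))\ar[r]^-{p^*} & R^*((f^*E)\times_Y(f^*\Ehat); \cdots)
}
\]
commutes by the contravariant functoriality of twisted $R$-cohomology applied to the evident pullback square of correspondence spaces over the pullback square for $E$. For $\Lambda$: since $\Lambda = \Th\circ h$ is defined purely from $h$ and $\Th$, and both pull back compatibly, the naturality square for $\Lambda$ commutes by functoriality of the isomorphism of twisted cohomology groups induced by a homotopy of twisting maps — here one uses that the canonical identification of pulled-back vertical tangent bundles is itself natural. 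For $\phat_!$: this is exactly the compatibility of the twisted Umkehr map with pullback squares recorded in diagram \eqref{compatible_umkehr}, applied to the pullback square $(f^*E)\times_Y(f^*\Ehat)\to E\times_X\Ehat$ covering $f^*\Ehat\to\Ehat$. Concatenating the three commuting squares gives the claim.

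The main obstacle I anticipate is bookkeeping of the twists rather than any substantive difficulty: one must check that the several canonical isomorphisms of twists (the splitting $\widetilde f^*(\tau\otimes\pi^*\omega)\cong\tau'\otimes\pi'^*f^*\omega$ from the discussion preceding \eqref{compatible_umkehr}, and the identification of pulled-back vertical tangent bundles) are mutually compatible, so that pasting the squares is legitimate and no sign or twist discrepancy is introduced when $\Lambda$ and $\phat_!$ are composed. This is routine given the naturality of the Pontrjagin–Thom construction and of the Thom spectrum functor, but it is the step requiring care. Everything else is a formal consequence of the fact that the entire construction of $T$ takes place over $X$ and is built from pullback-stable data.
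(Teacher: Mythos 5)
Your proposal is correct and follows essentially the same route as the paper: verify commutation for each of the three constituents $p^*$, $\Lambda$, and $\phat_!$ separately, using contravariant functoriality of twisted cohomology for the first two and the compatibility square \eqref{compatible_umkehr} for the twisted Umkehr map. The additional bookkeeping you flag (pullback-stability of the correspondence space, join, and homotopy $h$, and compatibility of the canonical twist identifications) is implicit in the paper's terser argument and is handled correctly in your write-up.
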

\begin{proof}
The maps $p^*$ and $\Lambda$ commute with $F^*$ and $\Fhat^*$  by the contravariant functoriality of twisted cohomology theories.  The twisted Umkehr map $\phat_{!}$ also commutes with $F^*$ and $\Fhat^*$ by diagram \eqref{compatible_umkehr}.
\end{proof}

\begin{lem} 
\label{lemma 3}
If $X$ is decomposed as the union $X = U \cup V$ of open subsets, we may restrict $E$ and $\Ehat$ to these subsets, obtaining $E = E_U \cup E_V$ and $\Ehat = \Ehat_U \cup \Ehat_V$.  Further, let $\delta$ and $\deltahat$ be the boundary operators in the Mayer-Vietoris sequences for these decompositions of $E$ and $\Ehat$, 
respectively.  Then the T-duality transformation preserves these Mayer-Vietoris sequences; that is, %
$$T \circ \delta = \deltahat \circ T: R^*(E_{U \cap V}, (\tau_E \otimes H)|_{E_{U \cap V}}) \to R^{* + 1}(\Ehat, \Hhat).$$

\end{lem}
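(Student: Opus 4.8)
The plan is to factor the T-duality transformation as $T = \phat_! \circ \Lambda \circ p^*$ and to verify that each of the three constituent maps commutes with the Mayer--Vietoris boundary operator of the twisted theory on which it acts. Write $W := E \times_X \Ehat$ for the correspondence space. The projections $p \colon W \to E$ and $\phat \colon W \to \Ehat$ satisfy $p^{-1}(E_A) = W_A = \phat^{-1}(\Ehat_A)$ for $A \in \{U, V, U \cap V\}$, so the decomposition $X = U \cup V$ induces compatible open decompositions of $E$, $W$, and $\Ehat$, with boundary operators $\delta$ (for $E$ with the twist $\tau_E \otimes H$), $\delta_W$ (for $W$, with whichever twist is in play), and $\deltahat$ (for $\Ehat$ with the twist $\Hhat$). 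It then suffices to prove the three identities
\[
p^* \circ \delta = \delta_W \circ p^*, \qquad \Lambda \circ \delta_W = \delta_W \circ \Lambda, \qquad \phat_! \circ \delta_W = \deltahat \circ \phat_!,
\]
since composing them gives $\deltahat \circ T = \phat_! \circ \delta_W \circ \Lambda \circ p^* = \phat_! \circ \Lambda \circ \delta_W \circ p^* = \phat_! \circ \Lambda \circ p^* \circ \delta = T \circ \delta$.

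For the first and third identities I would use the Thom-spectrum description of the boundary operator employed in the proof of Lemma~\ref{lemma:MV_sequences}, in which $\delta$ is read off from the cofiber sequence of $R$-module Thom spectra attached to an open cover. The map $p$ is covariant for Thom spectra: a map $g \colon Y' \to Y$ together with a twist $\omega$ on $Y$ induces $(Y')^{g^*\omega} \to Y^{\omega}$. Hence $p$ induces a morphism from the cofiber sequence computing $\delta_W$ for $(W; p^*(\tau_E \otimes H))$ to the one computing $\delta$ for $(E; \tau_E \otimes H)$, and passing to $R$-cohomology yields the first identity. The third identity is exactly Lemma~\ref{lemma:MV_sequences} applied to the $S^q$-bundle $\phat \colon W \to \Ehat$ — whose vertical tangent bundle is canonically $p^*$ of the vertical tangent bundle of $\pi$, so that its orientation twist is the twist $\tau_W$ appearing in the definition of $T$ — with $\omega = \Hhat$ on the base $\Ehat$.

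For the middle identity I would observe that the homotopy $\Lambda$ from $H \circ p$ to $\Hhat \circ \phat$, together with the canonical identification of $p^*V$ with the vertical tangent bundle of $\phat$, produces an equivalence of $R$-module Thom spectra $W^{-(p^*\tau_E \otimes p^*H)} \xrightarrow{\sim} W^{-(\tau_W \otimes \phat^*\Hhat)}$ which is defined over all of $W$ and hence restricts to compatible equivalences over $W_U$, $W_V$, and $W_{U \cap V}$. This is a morphism between the two Thom-spectrum cofiber sequences that compute $\delta_W$ for the two twists, so the induced isomorphism $\Lambda$ on twisted $R$-cohomology intertwines the two boundary operators. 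Equivalently, $\Lambda$ is a natural isomorphism of the twisted cohomology theories $R^*(-; p^*\tau_E \otimes p^*H)$ and $R^*(-; \tau_W \otimes \phat^*\Hhat)$ on open subsets of $W$, and a natural isomorphism of cohomology theories automatically commutes with connecting maps.

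The step requiring the most care is this middle one: one must make precise, at the level of parametrized (or Thom) spectra, that a global homotopy of twisting maps induces an equivalence of the associated parametrized spectra that is natural for restriction to open subsets, so that it genuinely furnishes a morphism of the cofiber sequences singled out in the proof of Lemma~\ref{lemma:MV_sequences}. Once that naturality is in hand, the $p^*$ step (functoriality of Thom spectra in the space variable) and the $\phat_!$ step (a direct appeal to Lemma~\ref{lemma:MV_sequences}) are routine, and the three identities assemble into the claim as above.
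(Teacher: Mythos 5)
Your proposal is correct and follows essentially the same route as the paper: factor $T = \phat_! \circ \Lambda \circ p^*$, handle $\phat_!$ by Lemma~\ref{lemma:MV_sequences}, and handle $p^*$ and $\Lambda$ by observing that each is induced by a (globally defined) map of parametrized $R$-module spectra, hence gives a morphism of the Thom-spectrum cofiber sequences computing the Mayer--Vietoris boundary. The extra care you flag for the $\Lambda$ step is exactly what the paper compresses into the phrase ``induced by a map of spectra over $E \times_X \Ehat$.''
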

\begin{proof}
We argue that each component of $T = \phat_{!} \circ \Lambda \circ p^*$ commutes with the relevant boundary operator.
The pullback map $p^*$ is induced by a map of spectra over $E$, hence extends to a natural transformation of parametrized cohomology theories on spaces over $E$ \cite[\S20]{MS}.  The data of the boundary map $\partial$ in the cofiber sequence associated to a pair of spaces over $E$, such as in a Mayer-Vietoris sequence, is preserved by such a natural transformation.  Similarly, the map $\Lambda$ is induced by a map of spectra over $E \times_{X} \Ehat$, so commutes with the boundary operators.  The twisted Umkehr map $\phat_{!}$ commutes with the boundary operators by Lemma \ref{lemma:MV_sequences}.

\end{proof}

The proof of Theorem \ref{t_dual_thm} immediately follows by cellular induction, using the previous three results,
Lemmas \ref{lemma 1}--\ref{lemma 3}.

 \begin{rem}
Note that if $R'$ is an $A_\infty$ ring spectrum and $\psi: R \to R'$ an $A_\infty$ map, the composite $\psi \circ \phi$ is a periodic twisting of $R'$ by $n$-gerbes, and so the results of this theorem also hold for $R'$.  Further, it is evident from the proof that $\psi$ throws the T-duality isomorphism for $R$ onto that for $R'$.  
\end{rem}

Summarizing this result in the case that $R = \KA_n$, $R' = H\Q[\beta_n^{\pm 1}]$, and $\psi = \ch_n$, we have:

\begin{cor}\label{chern_corollary}

The higher Chern character $\ch_n$ throws the T-duality isomorphism for $\KA_n$ onto that of periodic rational cohomology, in the sense that the following diagram commutes:
$$\xymatrix@1{
\KA_n^*(E; \tau_E \otimes H) \ar[rr]^-{T_{\KA_n}}_-{\cong} \ar[d]_-{\ch_n} & & \KA_n^{*}(\Ehat; \Hhat) \ar[d]^-{\ch_n} \\
H\Q^*(E; \tau_E \otimes H)[\beta_n^{\pm 1}] \ar[rr]^-{\cong}_-{T_{H\Q[\beta_n^{\pm 1}]}} & & H\Q^{*}(\Ehat; \Hhat) [\beta_n^{\pm 1}]\;. 
}$$

\end{cor}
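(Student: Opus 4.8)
The plan is to deduce the corollary formally from Theorem \ref{t_dual_thm}, the remark immediately preceding it, and the naturality of the higher Chern character with respect to twistings that was established in the previous section. First I would invoke the fact that $\ch_n \colon \KA_n \to H\Q[\beta_n^{\pm 1}]$ is a map of $A_\infty$ (indeed $E_\infty$) ring spectra, so that the general remark applies with $R = \KA_n$, $R' = H\Q[\beta_n^{\pm 1}]$, and $\psi = \ch_n$: the composite $\ch_n \circ \phi \colon \KS_n \to H\Q[\beta_n^{\pm 1}]$ is a periodic twisting of $H\Q[\beta_n^{\pm 1}]$ by $n$-gerbes, and Theorem \ref{t_dual_thm} produces a T-duality isomorphism $T_{H\Q[\beta_n^{\pm 1}]}$ for this twisting. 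The remark further asserts that $\psi$ carries the T-duality isomorphism for $R$ onto that for $R'$; applying this with $\psi = \ch_n$ gives exactly the commuting square in the statement, once we check that the twists named in the square are the ones induced by $\ch_n \circ \phi$.

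The verification to carry out, then, is that the bottom row of the diagram really is the periodic-rational T-duality transformation for the twists $\ch_n \circ \tau_E \otimes H$ and $\ch_n \circ \Hhat$, i.e.\ that $\ch_n$ intertwines the twisting data on both sides. For the gerbe twists $H$ and $\Hhat$: by the computation at the end of \S\ref{Sec higherChern} (the paragraph showing $\ch_n$ carries $t_n$ to the standard twist of periodic cohomology, together with its Corollary), $\ch_n$ sends the $\KA_n$-twist associated to a class in $H^{n+2}$ to the standard twist of $H\Q[\beta_n^{\pm 1}]$ by the same class. For the orientation twist $\tau_E$: since $\ch_n$ is a map of ring spectra under $\eta \colon S \to \KA_n$, the induced map $\GL_1(\KA_n) \to \GL_1(H\Q[\beta_n^{\pm 1}])$ commutes with the maps out of $B\GL_1(S)$, so $\ch_n \circ \tau_E$ is the orientation twist of $E$ computed in $H\Q[\beta_n^{\pm 1}]$; here one uses that the action of $B\GL_1 S$ on $B\GL_1 R$ used to form the product twist $\tau_E \otimes H$ is natural in the $S$-algebra $R$. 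Thus $\ch_n$ sends $\tau_E \otimes H$ to the corresponding product twist on the periodic-rational side, and likewise for $\Hhat$, so all four corners and both horizontal maps in the square are the asserted ones.

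Finally I would assemble the square: the left and right vertical maps are $\ch_n$ applied in twisted cohomology (the natural transformations of twisted theories from the Corollary in \S\ref{Sec higherChern}); commutativity of the square is precisely the statement, already contained in the preceding remark, that the $A_\infty$ map $\psi = \ch_n$ throws $T_{\KA_n} = \phat_! \circ \Lambda \circ p^*$ onto $T_{H\Q[\beta_n^{\pm 1}]} = \phat_! \circ \Lambda \circ p^*$, component by component: $p^*$ and $\Lambda$ commute with $\ch_n$ by naturality of pullback and of the homotopy-induced isomorphism in twisted cohomology, and $\phat_!$ commutes with $\ch_n$ because the Pontrjagin--Thom collapse map is a map of spectra independent of $R$ and $\ch_n$ is a ring map, hence compatible with the Thom spectra $R \sma \Ehat^{-V}$. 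Both horizontal maps are isomorphisms by Theorem \ref{t_dual_thm} (applied to $\KA_n$ and to $H\Q[\beta_n^{\pm 1}]$ respectively), which is why the square is drawn with $\cong$ on top and bottom. The only point requiring genuine care — and the step I expect to be the main obstacle — is the bookkeeping of the twist identifications under $\ch_n$, in particular making the canonical isomorphism $\ch_n^*(\tau_E \otimes H) \cong (\ch_n \circ \tau_E) \otimes (\ch_n \circ H)$ compatible with the homotopy $\Lambda$ on the correspondence space; everything else is a formal consequence of $\ch_n$ being an $E_\infty$ ring map together with results already proved.
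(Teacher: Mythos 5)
Your proposal is correct and follows the same route as the paper: the corollary is obtained by specializing the remark after Theorem \ref{t_dual_thm} to $R = \KA_n$, $R' = H\Q[\beta_n^{\pm 1}]$, $\psi = \ch_n$, using that $\ch_n$ is an $E_\infty$ ring map and that it carries the $n$-gerbe twisting of $\KA_n$ to the standard twisting of periodic rational cohomology. The extra bookkeeping you supply on the compatibility of the orientation and gerbe twists under $\ch_n$ is exactly the content the paper leaves implicit.
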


\subsection{A T-duality isomorphism implies rationality}

Let $R$ be an $A_\infty$ ring spectrum which admits a twisting by $n+2$-dimensional cohomology classes.  That is, we take as given a map
$$\tau: K(\Z, n+2) \to B\GL_1(R)$$
from which we may define twisted cohomology groups $R^*(X, H)$ for $H \in H^{n+2}(X,\Z)$.  Note that we do \emph{not} assume that this twisting is periodic in the sense of Definition \ref{period_defn}.  In other words, the map 
\[
t \colon \Sigma^{\infty}_{+} K(\Z, n + 1) \longrightarrow R
\]
of ring spectra adjoint to $\tau$ is not assumed to factor through the Arthan spectrum $\KS_n =  \Sigma^{\infty}_{+} K(\Z, n + 1)[\iota^{-1}]$.  In particular, we may not immediately conclude from Proposition \ref{rat_prop} that $R$ is a rational spectrum.

One may ask whether a T-duality isomorphism holds in this setting.  That is, if $(E, H)$ and $(\Ehat, \Hhat)$ are T-dual $S^q$-bundles\footnote{We still require $n=2q+1$.} over $X$ in the sense of Definition \ref{t-dual_defn}, we may, as before, form the T-duality transformation $T$ as the composite
\[
\xymatrix{
R^*(E; \tau_{E} \otimes H) \ar[r]^-{p^*} & R^*(E \times_{X} \Ehat; p^*\tau_{E} \otimes p^* H) \ar[d]^-{\Lambda}_-{\cong} & \\
& R^*(E \times_{X} \Ehat; \tau_{E \times_{X} \Ehat} \otimes \phat^*\Hhat) \ar[r]^-{\phat_{!}} &R^*(\Ehat; \Hhat),
}
\]
and ask whether $T$ is an isomorphism.

\begin{thm} \label{rational_thm}

The following are equivalent: 
\begin{enumerate}
\item The T-duality transformation $T$ is an isomorphism for all base spaces $X$ of the homotopy type of a finite CW complex.
\item The T-duality transformation $T$ is an isomorphism for $X$ a point.
\item The twisting $\tau$ is periodic.
\end{enumerate}
In particular, any cohomology theory which admits a T-duality isomorphism for $S^q$-bundles with $q > 1$ must be a rational cohomology theory.

\end{thm}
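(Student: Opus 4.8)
The plan is to prove the cycle of implications $(1) \Rightarrow (2) \Rightarrow (3) \Rightarrow (1)$. The implication $(3) \Rightarrow (1)$ is immediate: if $\tau$ is periodic then $t$ factors through $\KS_n$, so $R$ is a $\KS_n$-algebra and Theorem \ref{t_dual_thm} applies verbatim, giving the T-duality isomorphism over every finite CW base. The implication $(1) \Rightarrow (2)$ is trivial, since a point has the homotopy type of a finite CW complex. So the substantive content is $(2) \Rightarrow (3)$, and the final sentence of the theorem then follows by combining $(2) \Leftrightarrow (3)$ with Proposition \ref{rat_prop}: if $q > 1$ then $n = 2q - 1 > 1$, periodicity of $\tau$ exhibits $R$ as a $\KS_n$-algebra, and $\KS_n \simeq H\Q[\iota^{\pm 1}]$ is rational by Arthan's Theorem \ref{arthan_thm}, whence $R$ is rational.

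The heart of the argument is therefore to show that if $T$ is an isomorphism when $X$ is a point, then the element $\beta = t \circ \iota \in \pi_{n+1}(R)$ is invertible (equivalently, $t$ factors through the localization $\KS_n = \Sigma^\infty K(\Z,n+1)_+[\iota^{-1}]$, which is exactly the definition of periodicity). First I would specialize the push–pull computation to the universal case over $X = \mathrm{pt}$: take $E = \Ehat = S^q$, both with trivial twisting $H = \Hhat = 0$ (legitimate since $H^{n+2}(S^q) = 0$), and let $\Th$ be a generator of $H^{n+2}(S^q * S^q) = H^{n+2}(S^{2q+1}) = H^{n+2}(S^{n+2}) \cong \Z$. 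This is a genuine T-dual pair in the sense of Definition \ref{t-dual_defn}. Running exactly the computation in the proof of Lemma \ref{lemma 1} — which used only the twisting $\tau$, not periodicity — the correspondence space has $R^*(S^q \times S^q) = R^*[x,\xhat]/(x^2,\xhat^2)$, the homotopy $\Lambda$ acts by multiplication by $1 \pm \beta x \xhat$ (with $\beta$ now the class $t\circ\iota$ regarded as a degree $-(n+1)$ element, the analysis of nullhomotopies going through identically), and one computes
\[
T(1) = \phat_!(1 \pm \beta x\xhat) = \pm\beta\,\xhat, \qquad T(x) = \phat_!(x) = 1 .
\]
Thus on the free $R^*$-submodule spanned by $1$ and $x$, the map $T$ sends $1 \mapsto \pm\beta\xhat$ and $x \mapsto 1$; for $T$ to be an isomorphism of $R^*$-modules $R^*[x]/(x^2) \to R^*[\xhat]/(\xhat^2)$, surjectivity forces $\xhat$ to lie in the image, and since the only way to hit $\xhat$ is via $1 \mapsto \pm\beta\xhat$ we need $\beta$ to be a unit in $\pi_* R$ (tracking through that $T$ is $\pi_*R$-linear and the modules are free of rank $2$). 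Invertibility of $\beta$ in $\pi_* R$ says precisely that $t \colon \Sigma^\infty K(\Z,n+1)_+ \to R$ carries $\iota$ to a unit, hence factors (uniquely, up to the relevant coherence) through $\Sigma^\infty K(\Z,n+1)_+[\iota^{-1}] = \KS_n$, i.e. $\tau$ is periodic. This closes the cycle.

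The main obstacle I anticipate is the careful bookkeeping in the last step: one must verify that $T$ being an isomorphism of \emph{graded abelian groups} (or $R^*$-modules) really does force $\beta$ to be a two-sided unit, not merely that multiplication by $\beta$ is injective on some piece. The clean way to see this is to observe that $\phat_!$ is itself an isomorphism $R^{*}(S^q \times_X S^q; \tau_{E\times_X \Ehat}\otimes\phat^*\Hhat) \xrightarrow{\sim} \bigoplus R^*(S^q;\Hhat)$ after the Thom identification (the vertical bundle of $\phat$ over a point is trivial), and $p^*$ is the inclusion of a free rank-one summand; so $T$ is an isomorphism iff $\Lambda$, i.e. multiplication by $1 \pm \beta x\xhat$, is an isomorphism of $R^*[x,\xhat]/(x^2,\xhat^2)$, and computing the determinant of this unipotent-looking matrix on the rank-four free module shows it is an isomorphism iff $\beta$ is a unit. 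A secondary point to be careful about is that the proof of Lemma \ref{lemma 1} must be re-examined to confirm it nowhere used invertibility of $\beta$ — inspection shows it only used the \emph{existence} of the map $\GL_1\phi \colon K(\Z,n+1) \to \GL_1 R$, equivalently of the twisting $\tau$, which is part of the present hypotheses.
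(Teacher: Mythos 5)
Your proposal is correct and, for the one substantive implication $(2)\Rightarrow(3)$, it is the same argument as the paper's: compute $T$ over a point in the basis $\{1,x\}$, find $T(1)=\pm\beta\xhat$ and $T(x)=1$, and observe that this $R^*$-linear map of free rank-two modules is invertible precisely when $\beta$ is a unit, which is exactly the condition that $t$ factor through $\KS_n$. Your logical organization (the cycle $(1)\Rightarrow(2)\Rightarrow(3)\Rightarrow(1)$, with $(3)\Rightarrow(1)$ quoted from Theorem \ref{t_dual_thm}) differs cosmetically from the paper, which instead proves $(2)\Rightarrow(1)$ directly by noting that the cellular-induction lemmas never use periodicity; both are complete.

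One caveat: the ``clean way'' offered in your final paragraph is wrong and should be discarded. The map $\Lambda$ is multiplication by $1\pm\beta x\xhat$, which is \emph{always} an isomorphism of $R^*[x,\xhat]/(x^2,\xhat^2)$ since $x\xhat$ is nilpotent (indeed $(1\pm\beta x\xhat)(1\mp\beta x\xhat)=1$), and $\phat_!$ is an Umkehr map from a rank-four free module to a rank-two one, hence not an isomorphism. So the criterion ``$T$ is an isomorphism iff $\Lambda$ is'' would falsely conclude that $T$ is always an isomorphism. The direct computation in the body of your argument is the correct (and the paper's) route; no determinant argument on the rank-four module is needed.
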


\begin{proof}

Clearly the first statement implies the second.  Conversely, the proof of Theorem \ref{t_dual_thm} showed that the general case reduced to the case $X$ a point using naturality (Lemma \ref{lemma 2}) and the Mayer-Vietoris sequence (Lemma \ref{lemma 3}) -- note that these results do not require the twisting to be periodic.  We have shown that the T-duality isomorphism holds when $X$ is a point under the assumption of a periodic twisting (Lemma \ref{lemma 1}), so the third statement implies the second.  So we need only show that the second statement implies the third.

Let us define $\beta: S^{n+1} \to R$ as we have done in section \ref{sec:iterated_twistings_ku}, via the composite
$$\xymatrix{S^{n+1} \ar[r]^-{\iota} & \Sigma^\infty K(\Z, n+1)_+ \ar[r]^-t & R}$$
where $t$ is adjoint to $\tau$.  We will show that $\beta$ is an invertible element of $\pi_*R$ under the assumption that T-duality holds over a point.  This then implies that the map of ring spectra $t$ factors through the Arthan spectrum $\KS_n$, and in particular makes $R$ an algebra spectrum over $\KS_n$.

As in the proof of Lemma \ref{lemma 1}, we may write $R^*(E \times \Ehat) = R^*[x, \xhat]/(x^2, \xhat^2)$ and compute the T-duality map to be the $R_*$-linear operation given by
$$1 \mapsto \pm \beta \xhat\; \mbox{ and } \; x \mapsto 1.$$
This can only be an isomorphism if $\beta$ is invertible.

\end{proof}

\section{Classifying spaces for T-dual pairs}
\label{Sec pairs} 

We continue to insist that $n=2q-1$ be odd.  In this section, we introduce spaces $R_n(G)$ and $P_n(G)$ that classify the objects considered in the previous section, at least in the case where the bundles $E$ and $\Ehat$ are oriented.  More precisely, let $G$ be a topological group equipped with a homomorphism to $\Homeo^+(S^q)$, the group of orientation-preserving homeomorphisms of the $q$-sphere $S^q$.  

We will construct natural isomorphisms that give the following identifications:
\begin{itemize}
\item $[X, R_n(G)]$ is the set of equivalence classes of pairs $(E, H)$, where $E \to X$ is an $S^q$-bundle with structure group $G$, and $H$ is a map representing a twisting class $[H] \in H^{n+2}(E)$.
\item $[X, P_n(G)]$ is the set of equivalence classes of triples $(E, \Ehat, \Th)$, where $E$, $\Ehat$ are $S^q$-bundles with structure group $G$, and $\Th \in H^{n+2}(E*_X \Ehat)$ is a bundle Thom class.
\end{itemize}
There is a map $P_n(G) \to R_n(G)$ that induces the natural transformation which carries $(E, \Ehat, \Th)$ to $(E, i^* \Th)$ (where we make a slight abuse of notation and write $i^*\Th$ for a choice of representing map).  We will analyze the homotopy type of the spaces involved and show that this map is an equivalence precisely when $q=1$. 

\begin{rem}
We may regard $[X, P_n(G)]$ as the set of T-dual pairs over $X$.  The failure of this map to be an equivalence for $q>1$ shows that it is not the case that every pair $(E, H)$ has a T-dual $(\Ehat, \Hhat)$, and that when a T-dual exists, there is no guarantee that it is unique.
However, we will encounter special cases towards the end of section \ref{Sec_Compare} where T-duals 
do exist and can be unique. 
\end{rem}

\subsection{Euler classes and the classifying space $P_n(G)$}
\label{Sec P}

Let $p: E \to X$ be a fiber bundle with fiber $S^m$.  If $E$ is the sub-sphere bundle $E = S(V)$ of an oriented real vector bundle $V$ over $X$ of rank $m+1$, then there is an \emph{Euler class}
$$e(E) = e(V) \in H^{m+1}(X)$$
which is the pullback of the (usual notion of the) Thom class of $X^V$ under the zero-section.

\medskip
More generally, if $E$ is an oriented $S^m$-bundle, the fiberwise unreduced suspension
$$\Sigma_X E := I \times E /\sim, \mbox{ where $(t, e) \sim (t, e')$ for $t=0,1$ when $p(e) = p(e')$}$$ 
is a topological $S^{m+1}$-bundle over $X$ which inherits an orientation from $E$.   There are two canonical sections $s_t: X \to \Sigma_X E$, where $t = 0, 1$, given by $s_t(x) = [t, e]$ for any $e$ in the fiber over $x$.  The associated Thom space 
$$X^{E\oplus 1} := \Sigma_X E / s_1(X)$$
is obtained by collapsing the image of $s_1$ to a point.  Since $\Sigma_X E$ is oriented, there is a Thom class $\theta_{E} \in H^{m+1}(X^{E\oplus 1})$ in integral cohomology with the property that the map
$$H^*(X) \to \widetilde{H}^{*+m+1}(X^{E \oplus 1}) \mbox{ given by } \alpha \mapsto (\Sigma_X p)^*(\alpha) \cup \theta_{E}$$
is an isomorphism.  We define the Euler class of $E$ by the formula $e(E) = s_0^*(\theta_{E})$. The following is a straightforward application of the Leray-Hirsch theorem and the Gysin sequence; in the case $m=1$, see section 2.2 of \cite{BS} and 3.1 of \cite{BEM}.

\begin{prop} \label{thom_prop}

The following are equivalent:
\begin{enumerate}\itemsep0em
\item $E$ admits a bundle Thom class $\Th \in H^m(E)$.
\item $H^*(E)$ is a free $H^*(X)$-module on $1, \Th$.
\item $e(E) = 0$.
\end{enumerate}

\end{prop}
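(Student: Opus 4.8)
The plan is to prove the chain of implications $(3)\Rightarrow(2)\Rightarrow(1)\Rightarrow(3)$, since $(2)\Rightarrow(1)$ is essentially trivial and the substance lies in $(3)\Rightarrow(2)$ (via Leray--Hirsch) and $(1)\Rightarrow(3)$ (via the Gysin sequence). First I would set up the Gysin sequence for the oriented $S^m$-bundle $p\colon E\to X$. Using the Thom isomorphism $\widetilde H^{*+m+1}(X^{E\oplus 1})\cong H^*(X)$ recorded just above, the long exact sequence of the pair $(\Sigma_X E, s_1(X))$ together with the cofiber sequence identifying $X^{E\oplus 1}$ yields the Gysin sequence
\[
\cdots \to H^{k}(X) \xrightarrow{\ \cup e(E)\ } H^{k+m+1}(X) \xrightarrow{\ p^*\ } H^{k+m+1}(E) \to H^{k+1}(X) \to \cdots,
\]
where the map $H^k(X)\to H^{k+m+1}(X)$ is cup product with $e(E)=s_0^*(\theta_E)$; this is the standard derivation, carried out for $m=1$ in \cite{BS, BEM} and formally identical in general.

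Next, $(1)\Rightarrow(3)$: suppose $E$ admits a bundle Thom class $\Th\in H^m(E)$. I would feed $\Th$ into the Gysin sequence in degree $m$ (i.e.\ $k+m+1 = m$, so $k=-1$): the relevant segment reads $H^{-1}(X)=0 \to H^{m}(E)\to H^0(X)\xrightarrow{\cup e(E)} H^{m+1}(X)$, and the connecting map $H^m(E)\to H^0(X)$ is exactly fiberwise integration $p_!$, which sends $\Th$ to $\pm 1$ by definition of a bundle Thom class. Hence $p_!$ is surjective, so the next map $\cup e(E)\colon H^0(X)\to H^{m+1}(X)$ is zero on the generator $1$, giving $e(E)=0$.

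For $(3)\Rightarrow(2)$: if $e(E)=0$, the map $\cup e(E)$ vanishes identically, so the Gysin sequence breaks into short exact sequences $0\to H^{k+m+1}(X)\xrightarrow{p^*} H^{k+m+1}(E)\to H^{k+1}(X)\to 0$. Choosing any class $\Th\in H^m(E)$ with $p_!(\Th)=1$ (which exists since, by exactness at $H^m(E)$ with $e(E)=0$, the connecting map $p_!\colon H^m(E)\to H^0(X)$ is onto), the Leray--Hirsch theorem applies: $1$ and $\Th$ restrict on each fiber to an $H^*(\mathrm{pt})$-basis of $H^*(S^m)$, so $H^*(E)$ is free over $H^*(X)$ on $\{1,\Th\}$. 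Finally $(2)\Rightarrow(1)$ is immediate: a freeness basis element $\Th$ in degree $m$ must restrict to a generator of $H^m(S^m;\Z)$ fiberwise (that is precisely the content of the Leray--Hirsch basis condition in top degree), so it is a bundle Thom class.

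The main obstacle I anticipate is purely bookkeeping rather than conceptual: making sure the orientation hypothesis is used correctly so that the Thom isomorphism for $X^{E\oplus 1}$ holds with $\Z$-coefficients (an unoriented bundle would force $\Z/2$-coefficients and change the statement), and correctly identifying the connecting homomorphism in the Gysin sequence with fiberwise integration $p_!$ so that the normalization $p_!(\Th)=\pm 1$ matches the ``restricts to a generator on each fiber'' definition. Both are standard, and for $m=1$ they are spelled out in \cite{BS, BEM}; the general $m$ case requires only replacing $S^1$ by $S^m$ throughout.
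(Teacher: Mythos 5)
Your proposal is correct and is precisely the argument the paper has in mind: the paper gives no written proof, stating only that the proposition ``is a straightforward application of the Leray--Hirsch theorem and the Gysin sequence'' (citing \cite{BS, BEM} for $m=1$), and your implications $(1)\Rightarrow(3)$ via exactness at $H^0(X)$, $(3)\Rightarrow(2)$ via Leray--Hirsch, and $(2)\Rightarrow(1)$ by fiberwise restriction fill in exactly that outline. The only nitpick is a small indexing slip in the displayed segment of the Gysin sequence (the term before $H^m(E)$ should be $H^m(X)$, not $H^{-1}(X)$), which does not affect the argument.
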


The set of homotopy classes of nullhomotopies of a representative cocycle for $e(E) \in H^{m+1}(X)$, if trivial, is a torsor under $H^{m}(X)$.  The same is true of bundle Thom classes $\Th \in H^m(E)$: for any $\alpha \in H^m(X)$, $\Th + p^*(\alpha)$ is another such bundle Thom class.  This structure is indeed a torsor (as it arises from a free and transitive action of $H^m(X)$) by virtue of point 2 in the above Proposition.  Thus we may in fact conclude:

\begin{quote}
{\it The set of homotopy classes of trivializations of $e(E)$ is naturally in bijection with the set of bundle Thom classes $\Th \in H^m(E)$}. \end{quote}

The next result implies that our definition of T-duality may be described in terms of characteristic classes, as in the cases $q = 1, 3$ \cite{BEM, BS, BEM2}.

\begin{prop}  Suppose that $E$ and $\Ehat$ are orientable $S^q$-bundles over $X$, and that $(E, H)$ and $(\Ehat, \Hhat)$ are T-dual.  Then the equations
\[
e(E) = \pm \pihat_{!}[\Hhat] \quad \mbox{ and } \quad e(\Ehat) = \pm \pi_{!} [H]
\]
hold in $H^{q + 1}(X)$.  
\end{prop}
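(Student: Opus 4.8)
The plan is to deduce the two Euler class equations from the T-duality hypothesis by transporting the defining property of the bundle Thom class $\Th$ on the fiberwise join $E *_X \Ehat$ down to the base using the Umkehr (Gysin) maps along the two sphere bundle projections. First I would record the basic Gysin-type relation: for the $S^q$-bundle $\pi \colon E \to X$ with a chosen orientation, the composite $\pi_! \circ \pi^* \colon H^*(X) \to H^{*+?}(X)$ behaves predictably, and more importantly $\pi_!$ applied to the restriction $i^*\Th = [H]$ can be computed on the fiberwise join. The key geometric input is that $E *_X \Ehat$ is the fiberwise unreduced suspension of $E$ along $\Ehat$ in a suitable sense: restricting the $S^{2q+1} = S^q * S^q$-bundle $E *_X \Ehat$ to $\Ehat$ (i.e. pulling back along $\pihat$) and collapsing the section $i$ gives precisely the Thom-space construction $\Ehat^{E \oplus 1}$ (up to the orientation bookkeeping in \S\ref{Sec P}), because the fiber of the join over a point $\hat e \in \Ehat$ is the unreduced suspension $\Sigma S^q$ with the two cone points identified with $\hat e$ and the opposite factor. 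Under this identification the bundle Thom class $\Th$ pulls back to (a multiple of) the Thom class $\theta_E$ of \S\ref{Sec P}, and its restriction along the zero-section recovers the Euler class $e(E)$.

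The main steps, in order, are: (1) Fix orientations of $E$ and $\Ehat$ (which exist by hypothesis) and the induced orientation of $E *_X \Ehat$; (2) Identify $\pihat^*(E *_X \Ehat)$, after collapsing the fiberwise copy of $E$, with $\Ehat^{E \oplus 1}$ as oriented sphere/Thom bundles over $\Ehat$; (3) Show that under this identification the restriction of $\Th$ to $\pihat^*(E *_X \Ehat)$ corresponds to the Thom class $\theta_E$, using the defining property of a bundle Thom class (its restriction to each fiber generates) together with the freeness statement in Proposition \ref{thom_prop}; (4) Apply $s_0^*$ and the definition $e(E) = s_0^*(\theta_E)$; (5) On the other hand, compute $\pihat_!([\Hhat]) = \pihat_!(\ihat^*\Th)$ by pushing $\Th$ forward along the $S^q$-bundle $\ihat \colon \Ehat \hookrightarrow E *_X \Ehat$ relative to $X$ — equivalently, decompose the Gysin map $(E *_X \Ehat \to X)_!$ through $\pihat_!$ and through the suspension direction — and observe that this push-forward in the suspension direction is exactly the operation that extracts the Euler class, giving $\pihat_!([\Hhat]) = \pm e(E)$ in $H^{q+1}(X)$. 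The $\pm$ ambiguity is harmless and tracks the choice of fiber orientations. The symmetric argument with the roles of $E$ and $\Ehat$ exchanged (using the fiberwise embedding $i$ instead of $\ihat$) yields $e(\Ehat) = \pm \pi_!([H])$.

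I expect the main obstacle to be Step (3): carefully matching the \emph{bundle Thom class} $\Th \in H^{n+2}(E*_X\Ehat)$ against the \emph{Thom-space Thom class} $\theta_E$ of the auxiliary construction $\Ehat^{E\oplus 1}$, keeping the orientation conventions and the degree shift $n+2 = 2q+1$ straight. The cleanest route is probably to work fiberwise and invoke naturality of the Leray–Hirsch/Gysin machinery: over each point of $X$ the statement is the elementary fact that for the join $S^q * S^q$ the inclusion of one factor and collapse of the other realizes $S^{2q+1}$ as the unreduced suspension of $S^q$, under which the top class restricts correctly; then one globalizes. An alternative, perhaps slicker, is to bypass $\theta_E$ entirely and argue directly that $\pihat_!(\ihat^*\Th)$ is a bundle Thom class for $E$ pushed to $X$ — i.e. that $\pi_!(\pihat_!(\ihat^*\Th)) = \pm 1$ — forcing it (by the torsor description boxed after Proposition \ref{thom_prop}, applied to $X$) to be $\pm e(E)$; this reduces everything to the double-coset/Fubini compatibility of Umkehr maps for the iterated bundle $E *_X \Ehat \to \Ehat \to X$ versus $E *_X \Ehat \to X$, which is standard. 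The case $q=1$ is already in \cite{BEM, BS}, so the content here is purely the bookkeeping needed to run the same argument for general odd $n = 2q-1$.
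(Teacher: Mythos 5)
Your overall strategy --- extracting the two Euler class equations from the defining property of the bundle Thom class $\Th$ by Gysin/Thom bookkeeping on the fiberwise join --- is the right one and is the strategy the paper follows, but the load-bearing step, the computation of $\pihat_!(\ihat^*\Th)$, does not go through as you have set it up. You propose to ``decompose the Gysin map $(E*_X\Ehat\to X)_!$ through $\pihat_!$ and through the suspension direction,'' but the fiberwise join $E*_X\Ehat$ does not fiber over $\Ehat$ (only the inclusion $\ihat$ goes the other way), so there is no iterated fibration and no Fubini-type identity $\rho_!=\pihat_!\circ(-)_!$ to invoke. The backup ``slicker'' route is dimensionally incoherent: $\pihat_!(\ihat^*\Th)$ already lives in $H^{q+1}(X)$, so $\pi_!$ cannot be applied to it, and the torsor statement following Proposition \ref{thom_prop} classifies bundle Thom classes in $H^{q}(E)$ --- it says nothing that would force a class in $H^{q+1}(X)$ to equal $\pm e(E)$. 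Your step (2) is also off: $\pihat^*(E*_X\Ehat)$ is an $S^{2q+1}$-bundle over $\Ehat$, whereas the Thom space of ``$E\oplus 1$'' over $\Ehat$ has fiber dimension $q+1$, so the proposed identification (and hence the matching of $\Th$, which has degree $2q+1$, against $\theta_E$, which has degree $q+1$) cannot hold as stated.

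What replaces the nonexistent factorization is the morphism of Gysin sequences induced by the fiberwise inclusion $\ihat\colon \Ehat\to E*_X\Ehat$ (and symmetrically by $i$): comparing the Gysin sequence of $\rho\colon E*_X\Ehat\to X$ with that of $\pihat$, the square
\[
\xymatrix{
H^{2q+1}(E*_X\Ehat) \ar[r]^-{\rho_!} \ar[d]_{\ihat^*} & H^{0}(X) \ar[d]^{\cdot\, e(E)} \\
H^{2q+1}(\Ehat) \ar[r]_-{\pihat_!} & H^{q+1}(X)
}
\]
commutes, whence $\pihat_![\Hhat]=\pihat_!(\ihat^*\Th)=e(E)\cdot\rho_!\Th=\pm e(E)$. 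The commutativity of this square is where your ``restrict and read off $e(E)$ via $s_0^*$'' intuition actually lives, but it must be implemented on the fiberwise suspensions: one needs the multiplicativity $e(E*_X\Ehat)=e(E)\cdot e(\Ehat)$, obtained from the homeomorphism $X^{(E*_X\Ehat)\oplus 1}\cong X^{E\oplus\Ehat\oplus 2}$ and the product Thom class $\theta_E\cdot\theta_{\Ehat}$, together with $s_0^*\theta_E=e(E)$. Finally, you cannot simply ``match $\Th$ against $\theta_E$'': T-duality hands you an arbitrary bundle Thom class, which need not be the restriction of the preferred class $\theta_E\cdot\theta_{\Ehat}$; the argument may only use $\rho_!\Th=\pm 1$, which is precisely why the Gysin-ladder formulation is the correct one.
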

\begin{proof}
Let $\Sigma_{X}E$ and $\Sigma_{X}\Ehat$ be the oriented $S^{q + 1}$-bundles over $X$ formed by taking the fiberwise unreduced suspension.  Form the fiberwise smash product $\Sigma_{X} E \sma_{X} \Sigma_{X}\Ehat$ with respect to the canonical sections $s_1$ on either side.  This is a $S^{2q + 2}$-bundle and that inherits an orientation from $E$ and $\Ehat$.  In other words, we equip the Thom space 
\[
X^{E \oplus \Ehat \oplus 2} := (\Sigma_{X} E \sma_{X} \Sigma_{X}\Ehat) / s_1(X)
\]
with the Thom class $\theta_{E} \cdot \theta_{\Ehat} \in H^{2q + 2}(X^{E \oplus \Ehat \oplus 2})$.  Consequently, the associated Euler class satisfies
\[
e(E \ast_{X} \Ehat) = e(E) \cdot e(\Ehat).
\]
The $S^{2q + 1}$-bundle $E \ast_{X} \Ehat$ also inherits an orientation from $E$ and $\Ehat$; the Thom class on the associated Thom space agrees with $\theta_{E} \cdot \theta_{\Ehat}$ under the homeomorphism 
\[
X^{(E \ast_{X} \Ehat) \oplus 1} \cong X^{E \oplus \Ehat \oplus 2}.
\]
Using these orientations to define Thom isomorphisms and their associated Umkehr maps, the inclusion $i \colon E \arr E \ast_{X} \Ehat$ induces a commutative diagram relating the Gysin sequences for $\rho \colon E \ast_{X} \Ehat \arr X$ and $\pi \colon E \arr X$:
\[
\xymatrix{
\dotsm \ar[r] & H^{2q + 1}(X) \ar[r]^-{\rho^*} & H^{2q + 1}(E \ast_{X} \Ehat) \ar[r]^-{\rho_{!}} \ar[d]_{i^*}  & H^{0}(X) \ar[d]^{e(\Ehat)} \ar[r]^-{e(E \ast_{X} \Ehat)} & H^{2q + 2}(X) \ar[r] \ar@{=}[d] & \dotsm \\
\dotsm \ar[r] & H^{2q + 1}(X) \ar[r]_-{\pi^*} \ar@{=}[u] & H^{2q + 1}(E) \ar[r]_-{\pi_{!}} & H^{q + 1}(X) \ar[r]_-{e(E)} & H^{2q + 2}(X) \ar[r] & \dotsm
}
\]
Now suppose that $(E, H)$ and $(\Ehat, \Hhat)$ are T-dual.  In particular, we are given a bundle Thom class $\Th \in H^{2q + 1}(E \ast_{X} \Ehat)$ satisfying $i^*\Th = [H]$ and $\ihat^*\Th = [\Hhat]$.  We do not know if (the suspension of) $\Th$ is the pullback of our preferred Thom class $\theta_{E} \cdot \theta_{\Ehat}$, but we do know that $\rho_{!} \Th = \pm 1$, and consequently that $\pi_{!}[H] = \pm e(\Ehat)$.  The relation $\pihat_{!}[\Hhat] = \pm e(E)$ follows from the analogous diagram involving the Gysin sequence for $\pihat$.  

\end{proof}

We now construct the space $P_n(G)$ which parameterizes triples $(E, \Ehat, \Th)$, where $E$ and $\Ehat$ are $S^q$-bundles with structure group $G$, and $\Th$ is a bundle Thom class on $E *_X \Ehat$.

\begin{defn} 
Let $P_n(G)$ be the homotopy fiber of the composite
$$\xymatrix@1{BG \times BG \ar[r]^-{B\join} & B\Homeo^{+}(S^q * S^q) \ar[r]^-{e} & K(\Z, 2q+2) = K(\Z, n+3)}.$$
Here $\join \colon G \times G \to \Homeo^{+}(S^q * S^q)$ is the join of a pair of orientation-preserving homeomorphisms and $e$ classifies the Euler class. 
\end{defn}

By construction, homotopy classes of maps $X \to P_n(G)$ are in bijection with the set of equivalence classes of the following data:
\begin{itemize} \itemsep0em
\item a pair $(E, \Ehat)$ of $S^q$-bundles over $X$ with structure group $G$, and 
\item the homotopy class of a trivialization of the Euler class of the fiberwise join $E \ast_{X} \Ehat$.
\end{itemize}
Applying Proposition \ref{thom_prop} (or rather, its subsequent refinement) in this setting yields:

\begin{prop}\label{prop:classify_P}
 There is a bijection between $[X, P_n(G)]$ and the set of equivalence classes of triples $(E, \Ehat, \Th)$, where $\Th \in H^{n+2}(E \times_X \Ehat)$ is a bundle Thom class.  \end{prop}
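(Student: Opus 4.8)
The plan is to identify the homotopy-theoretic data classified by $P_n(G)$ with the geometric data $(E,\widehat{E},\mathrm{Th})$ by unwinding the homotopy fiber sequence defining $P_n(G)$ and then invoking Proposition \ref{thom_prop} together with the torsor refinement that follows it. The heart of the matter is purely formal once one identifies the Euler class of the fiberwise join $E *_X \widehat{E}$ with the class pulled back along $BG \times BG \xrightarrow{B\join} B\Homeo^+(S^q * S^q) \xrightarrow{e} K(\Z, 2q+2)$.

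\medskip
First I would recall the general principle: for any map $g \colon B \to K(\Z, m+1)$ with homotopy fiber $F$, a homotopy class $[X \to F]$ is the same as a pair consisting of a map $f \colon X \to B$ together with a homotopy class of nullhomotopy of $g \circ f \colon X \to K(\Z, m+1)$. Applying this to the defining fiber sequence of $P_n(G)$, a map $X \to P_n(G)$ is the data of a map $X \to BG \times BG$ — equivalently, a pair of $S^q$-bundles $E, \widehat{E}$ over $X$ with structure group $G$, via the usual classification of principal bundles — together with a homotopy class of nullhomotopy of the composite $X \to BG \times BG \xrightarrow{B\join} B\Homeo^+(S^{2q+1}) \xrightarrow{e} K(\Z, 2q+2)$.

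\medskip
Second I would identify this composite concretely. The map $B\join$ classifies the fiberwise join: given $(E, \widehat{E})$ classified by $X \to BG \times BG$, the composite $X \to B\Homeo^+(S^{2q+1})$ classifies the oriented $S^{2q+1}$-bundle $E *_X \widehat{E}$. Then postcomposition with $e$ yields the Euler class $e(E *_X \widehat{E}) \in H^{2q+2}(X) = H^{n+3}(X)$. So a map $X \to P_n(G)$ is precisely the data of $(E, \widehat{E})$ together with a homotopy class of trivialization of $e(E *_X \widehat{E})$, which is exactly the displayed bulleted description in the text. Finally, by Proposition \ref{thom_prop} applied to the $S^{2q+1} = S^{n+2}$-bundle $E *_X \widehat{E} \to X$, a trivialization of the Euler class exists iff $E *_X \widehat{E}$ admits a bundle Thom class $\mathrm{Th} \in H^{n+2}(E *_X \widehat{E})$, and by the torsor refinement following that proposition the set of homotopy classes of trivializations of $e(E *_X \widehat{E})$ is naturally in bijection with the set of bundle Thom classes $\mathrm{Th} \in H^{n+2}(E *_X \widehat{E})$ (both being torsors under $H^{n+1}(X)$ and $H^{n+1}(E *_X \widehat{E})$ respectively, compatibly — here one uses that $p^* \colon H^{n+1}(X) \to H^{n+1}(E *_X \widehat{E})$ is an isomorphism, a consequence of the Gysin sequence for the $S^{n+2}$-bundle since $n+2 > n+1$). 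Combining these identifications gives the claimed bijection between $[X, P_n(G)]$ and triples $(E, \widehat{E}, \mathrm{Th})$.

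\medskip
\textbf{Main obstacle.} The only genuinely subtle point — and the step I would be most careful about — is matching the \emph{torsor structures}: one must check that the $H^{n+1}(X)$-action on homotopy classes of nullhomotopies of $e(E *_X \widehat{E})$ corresponds, under the identification above, to the $H^{n+1}(E*_X\widehat{E})$-action on bundle Thom classes, via the pullback isomorphism $p^*\colon H^{n+1}(X) \xrightarrow{\cong} H^{n+1}(E*_X\widehat{E})$. This is essentially what the quoted-box refinement after Proposition \ref{thom_prop} asserts, so in the write-up I would simply cite that refinement; the remaining content is bookkeeping about the fiber sequence. I should also note a harmless typo in the statement of Proposition \ref{prop:classify_P}: the Thom class lives on $E *_X \widehat{E}$, not on $E \times_X \widehat{E}$.
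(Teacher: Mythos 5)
Your argument is correct and is essentially the paper's own (very terse) proof: unwind the homotopy fiber sequence defining $P_n(G)$ to obtain the pair $(E,\Ehat)$ together with a homotopy class of trivialization of $e(E*_X\Ehat)$, then apply Proposition \ref{thom_prop} and the torsor refinement stated immediately after it. You are also right that the statement contains a typo --- the bundle Thom class lives on the fiberwise join $E*_X\Ehat$ (an $S^{n+2}$-bundle), not on the fiber product $E\times_X\Ehat$, consistent with Definition \ref{t-dual_defn} and the rest of the paper.
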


\begin{rem}
Up to equivalence, the data of a triple $(E, \Ehat, \Th)$ may be identified with the data of two T-dual pairs $(E, H)$ and $(\Ehat, \Hhat)$ via $(E, \Ehat, \Th) \longmapsto (E, i^* \Th), (E, \ihat^*\Th)$.  Thus $P_n(G)$ is the classifying space for T-dual pairs.
\end{rem}

\subsection{The classifying space $R_n(G)$}
\label{Sec R}

We now work with slightly more generality.  Let $F$ be a topological space and suppose that $G$ is a topological group equipped with a continuous homomorphism $G \to \Homeo(F)$.  Consider pairs $(E, H)$ consisting of a fiber bundle $\pi \colon E \to X$ with fiber $F$ and structure group $G$, along with a map $H \colon E \arr K(\bZ, n + 2)$ representing a cohomology class $[H] \in H^{n + 2}(E; \bZ)$ on the total space that is fiberwise trivial, i.e. for every $x \in X$, the restriction of $[H]$ to $H^{n + 2}(E_{x}; \bZ)$ is zero.  We will construct a classifying space $R_{n}(G)$ for such pairs $(E, H)$.  Our construction is a generalization of Bunke-Schick's classifying space for principal circle bundles equipped with a $U(1)$-gerbe \cite{BS}.

\medskip
Let $\Map_0(F, K(\bZ, n + 2))$ denote the connected component of $\Map(F, K(\bZ, n + 2))$ containing the basepoint.  In other words, $\Map_0(F, K(\bZ, n + 2))$ is the subspace of nullhomotopic maps.  The group $G$ acts on $\Map(F, K(\bZ, n + 2))$ by $(g \cdot f)(x) = f(g^{-1} x)$.

\begin{prop}\label{prop:classify_R}
Equivalence classes of pairs $(E, H)$ are classified by the space 
\[
R_{n}(G) := EG \times_{G} \Map_0(F, K(\bZ, n + 2)).
\] 
\end{prop}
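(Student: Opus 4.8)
The goal is to identify the classifying space for pairs $(E,H)$ where $E \to X$ is an $F$-bundle with structure group $G$ and $H$ is a fiberwise-trivial degree $n+2$ class on $E$. The natural strategy is to pass through the Borel construction and use the standard correspondence between sections of an associated fiber bundle and lifts/structures on the total space. First I would recall that an $F$-bundle with structure group $G$ over $X$ is classified by a map $f \colon X \to BG$, and that its total space is the Borel construction $E = f^*(EG \times_G F)$. A cohomology class $[H] \in H^{n+2}(E;\bZ)$ is represented by a map $E \to K(\bZ,n+2)$; the key observation is that such a map, together with the projection $E \to X$, is the same data as a \emph{section} over $X$ of the fiber bundle with fiber $\Map(F,K(\bZ,n+2))$ associated to the $G$-action on the mapping space, i.e. the bundle $EG \times_G \Map(F,K(\bZ,n+2)) \to BG$ pulled back along $f$. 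This is a standard adjunction (``a map out of the total space is a section of the associated mapping bundle''), valid because $G$ acts on $\Map(F,K(\bZ,n+2))$ by $(g\cdot \phi)(x) = \phi(g^{-1}x)$ precisely so that evaluation $E \times_X (\text{mapping bundle}) \to K(\bZ,n+2)$ is well-defined.

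\textbf{Key steps.} Second, I would impose the fiberwise-triviality condition: the restriction of $[H]$ to each fiber $E_x \cong F$ is zero in $H^{n+2}(F;\bZ)$. Under the section description, the restriction to the fiber over $x$ is exactly the value of the section at $x$, viewed as an element of $\Map(F,K(\bZ,n+2))$ up to homotopy, i.e. as a point of $\pi_0 \Map(F,K(\bZ,n+2))$. The class is fiberwise trivial iff this point lies in the component of the constant (nullhomotopic) maps. So sections of the mapping bundle that are everywhere fiberwise trivial correspond precisely to sections of the \emph{sub}bundle $EG \times_G \Map_0(F,K(\bZ,n+2)) \to BG$, where $\Map_0$ is the nullhomotopic component. (I should note here that $G$ preserves $\Map_0$: a nullhomotopic map precomposed with a homeomorphism is still nullhomotopic, so the $G$-action restricts.) Third, I would invoke the general principle that sections of a fibration $p \colon Z \to BG$ up to fiberwise homotopy — together with the choice of underlying map to $BG$ — are classified by maps $X \to Z$ up to homotopy, via the equivalence $[X, Z] \cong \{(f \colon X \to BG,\ \text{section of } f^*Z)\}$. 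Applying this with $Z = R_n(G) = EG \times_G \Map_0(F,K(\bZ,n+2))$ gives the bijection between $[X, R_n(G)]$ and equivalence classes of pairs $(E,H)$.

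\textbf{Main obstacle.} The conceptually delicate point — and the step I expect to require the most care — is getting the \emph{equivalence relation} on pairs $(E,H)$ to match the homotopy relation on maps $X \to R_n(G)$, since ``isomorphism class of $(E,H)$'' involves both an isomorphism of bundles $E \cong E'$ and a compatible identification of the cohomology classes, whereas homotopies of maps $X \to R_n(G)$ only see cohomology classes (not cocycles) and see bundle isomorphisms only up to homotopy. One must check that a homotopy of classifying maps $X \to R_n(G)$ induces exactly a bundle isomorphism together with the identification $[H] \leftrightarrow [H']$ under pullback, and conversely that any such isomorphism of pairs is realized by a homotopy; this is where Bunke--Schick's argument in the circle-bundle case \cite{BS} is the template to follow. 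A secondary technical point is ensuring the adjunction ``maps out of the Borel construction $E$ $\leftrightarrow$ sections of the associated mapping bundle'' is stated at the level of homotopy classes with the correct naturality, which again is routine but must be spelled out. I would organize the write-up as: (i) the mapping-bundle adjunction; (ii) translation of fiberwise triviality into the $\Map_0$ condition; (iii) the section-versus-map-into-total-space classification, concluding the bijection; with the equivalence-relation bookkeeping folded into step (iii) following \cite{BS}.
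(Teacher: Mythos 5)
Your proposal is correct and follows essentially the same route as the paper: both identify $R_n(G)$ as the associated bundle over $BG$ with fiber $\Map_0(F,K(\bZ,n+2))$ and use the adjunction between maps $E = P\times_G F \to K(\bZ,n+2)$ and ($G$-equivariant maps $P \to \Map_0(F,K)$, i.e.) sections of the mapping bundle, with fiberwise triviality cutting out the $\Map_0$ component. The paper merely implements your abstract ``section of a fibration over $BG$'' principle concretely, via the two-sided bar construction, an explicit universal pair $(E_{\mathrm{univ}}, H_{\mathrm{univ}})$, and a zigzag through $B(\ast,G,P)$ to produce the classifying map.
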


\begin{proof}
Starting with a group $G$, one can extend the $G$-Borel construction $EG \times_G(-)$ to the 
two-sided bar construction 
$B(-, G, -)$, where the first and last entries are right and left $G$-spaces, respectively.  Applying this to the space of functions from $F$ to $K = K(\bZ, n + 2)$ 
gives $R = B(\ast, G, \Map_0(F, K))$.  The total space of the universal pair over $R$ is
\[
E_{\mathrm{univ}} = B(F, G, \Map_0(F, K)) \to B(*, G, \Map_0(F, K)) = R
\]
with projection to $R$ induced by $F \to \ast$.  Note that the evaluation map $F \times \Map_0(F, K) \to K$ is $G$-invariant, hence descends to the homotopy quotient:
\[
H_{\mathrm{univ}} \colon B(F, G, \Map_0(F, K)) \to K.
\]
This map represents the universal cohomology class $[H_{\mathrm{univ}}] \in H^{n + 2}(E_{\mathrm{univ}}; \bZ)$.  The pair $(E_{\mathrm{univ}}, H_{\mathrm{univ}} )$ is the universal pair.

Given a pair $(E, H)$, we now construct a classifying map $X \to R$.  Let $P \to X$ be the principal $G$-bundle associated to $E$.  The usual contracting simplicial homotopy shows that the projection $B(G, G, P) \to P$ induced by the action of $G$ on $P$ is a homotopy equivalence.  Now form the following diagram of principal $G$-bundles
\[
\xymatrix{
P \ar[d] & B(G, G, P) \ar[d] \ar[l]_-{\simeq} \ar[r] & B(G, G, \Map_{0}(F, K)) \ar[d] \\
X \ar@{-->}[r]<-.5ex> & B(\ast, G, P) \ar[l]<-.5ex>_-{\simeq} \ar[r] & B(\ast, G, \Map_0(F, K)) = R\;.
}
\]
The map on the lower left is an equivalence by the five lemma, so we may choose a wrong-way homotopy inverse as indicated by the dashed arrow.  The maps pointed to the right are induced by the $G$-map $P \to \Map_{0}(F, K)$ adjoint to the map $H \colon P \times_{G} F = E \to K$.  The composite along the bottom gives the classifying map for the pair $(E, H)$.  

To see that the fiber bundle $E \to X$ is the pullback of the fiber bundle $E_{\mathrm{univ}} \arr R$ along this classifying map, apply $F \times_{G} (-)$ to the total spaces of the principal $G$-bundles in the diagram.

\end{proof}

\begin{rem}
More generally, if we replace $K(\bZ, n + 2)$ by any space $K$, then $R$ classifies equivalence classes of pairs $(E, H)$ consisting of a fiber bundle $E \to X$ with fiber $F$ and structure group $G$ and a map $H \colon E \to K$ that is fiberwise nullhomotopic.
\end{rem}

\subsection{Comparing $R_n(G)$ and $P_n(G)$}
\label{Sec_Compare} 

We now return to the usual setting of oriented $S^q$-bundles with structure group $G$ and set $n = 2q - 1$.  We will analyze the map $f \colon P_{n}(G) \to R_{n}(G)$ that represents the forgetful functor taking a T-dual pair $(E, H)$ and $(\Ehat, \Hhat)$ to the first item $(E, H)$.  Abbreviate $P = P_{n}(G)$ and $R = R_{n}(G)$ and note that by construction $R$ fits into a fiber sequence
\[
\xymatrix{
\Map(S^q, K(\Z, n + 2)) \ar[r] & R \ar[r] & BG.
}
\]
Also note that by the connectivity of the Eilenberg-MacLane space, $\Map(S^q, K(\Z, n + 2)$ is connected, so we are automatically restricting to nullhomotopic maps as in the construction of $R$.  

\medskip
The classifying space $P$ also participates in a fiber sequence, which we extend to the left by one entry:
\[
\dotsm \to K(\Z, n + 2) \to P \to BG \times BG \xrightarrow{ e(- \ast -)} K(\Z, n + 3).
\]
The composite of the map from $P$ with the projection ${\rm pr}_{i}$:  $BG \times BG \to BG$ to the first or second factor classifies a principal $G$-bundle with fiber $S^q$ over $P$, which we call $E$ and $\Ehat$, respectively.  The fiberwise join $E \ast \Ehat$ carries a canonical bundle Thom class by Proposition \ref{prop:classify_P}. Pulling it back along the inclusion $i_1 \colon E \to E \ast \Ehat$ gives a class $[H] \in H^{n + 2}(E)$. Choose a representing map $H \colon E \to K(\Z, n + 2)$.  By Proposition \ref{prop:classify_R}, the pair $(E, H)$ over $P$ is classified by a map $f \colon P \to R$ which fits into a map of fiber sequences
\begin{equation}\label{diag PR}
\xymatrix{
K(\Z, n + 2) \ar[d] \ar[r] & P \ar[d]^{f} \ar[r] & BG \times BG \ar[d]^{{\rm pr}_1} \\
\Map(S^q, K(\Z, n + 2)) \ar[r] & R \ar[r] & BG  \; .
}
\end{equation}

\medskip
Consider the new fibration $P \to BG$ given by passing from the upper middle to lower right of this diagram; this now has fiber $BG \times K(\Z, n+2)$.  There is a morphism of fiber sequences over $BG$:
\begin{equation}\label{compare_eqn}
\xymatrix{
BG \times K(\Z, n + 2) \ar[d] \ar[r] & P \ar[d]^{f} \ar[r] & BG  \ar@{=}[d] \\
\Map(S^q, K(\Z, n + 2)) \ar[r] & R \ar[r] & BG  \; .
}
\end{equation}
To compare the homotopy types of $P$ and $R$, we may then compare the homotopy type of their fibers.  Define a function $\te: BG \to \Map(S^q, K(\Z, n+2))$ as the composite 
$$\xymatrix@1{BG \ar[r]^-{e} & K(\Z, q+1) \simeq \Omega^q K(\Z, n+2) \ar[r] & \Map(S^q, K(\Z, n+2))}.$$
Here, $e$ is the Euler class of a $S^q$ bundle with structure group $G$, and the second map is the inclusion of the space of based maps into all maps.  Note that the fiber $\Map(S^q, K(\Z, n + 2))$ of the bottom fibration in (\ref{compare_eqn}) is an H-space, via multiplication of maps in the target.

\begin{lem}\label{fiber_lem}
In the comparison between the fibers over $P$ and $R$:
\begin{enumerate}
\item The induced map of fibers $K(\Z, n + 2) \to \Map(S^q, K(\Z, n + 2))$ in (\ref{diag PR}) is homotopic to the inclusion $x \mapsto \mathrm{const}_{x}$ of constant maps.
\item The fiber map $BG \times K(\Z, n + 2) \to \Map(S^q, K(\Z, n + 2))$ in (\ref{compare_eqn}) is homotopic to the product of the maps $\te$ and $\mathrm{const}$. \end{enumerate}
\end{lem}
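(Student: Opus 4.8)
The plan is to reduce both parts of the lemma to a single cohomology computation in the universal example, and to derive part (1) as the restriction of part (2) along $\{*\} \times K(\Z,n+2) \hookrightarrow BG \times K(\Z,n+2)$. Set $Y := BG \times K(\Z,n+2)$, the fiber of $P \to BG \times BG \xrightarrow{\mathrm{pr}_1} BG$ appearing in $(\ref{compare_eqn})$. Since $f \colon P \to R$ classifies the pair $(E,H)$ over $P$, the fiber map in $(\ref{compare_eqn})$ is the adjoint of the class $i_1^*\Th$ restricted over $Y$. Over $Y$ the first $BG$-coordinate is the basepoint, so $E$ restricts to the trivial bundle $Y \times S^q$ while $\Ehat$ restricts to the pullback of the universal $S^q$-bundle from the $BG$-factor of $Y$. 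Under the adjunction and the K\"unneth splitting $H^{n+2}(Y \times S^q) \cong H^{n+2}(Y) \oplus \bigl(H^{q+1}(Y) \otimes \sigma_q\bigr)$ (with $\sigma_q$ a generator of $H^q(S^q)$), a map $Y \to \Map(S^q, K(\Z,n+2))$ decomposes as the $H$-space product of its ``constant-maps'' part (the $H^{n+2}(Y)$-summand) and its part valued in based maps $\Omega^q K(\Z,n+2) \simeq K(\Z,q+1)$ (the $H^{q+1}(Y)$-summand). So it suffices to show that the two K\"unneth components of $i_1^*\Th|_Y$ are $1 \otimes \iota$ (the fundamental class of the $K(\Z,n+2)$-factor) and, up to sign, the universal Euler class $e \otimes 1$; part (1) then follows because $\te(*) = \mathrm{const}_0$, as $e(*) = 0$.

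For the second component --- equivalently the fiberwise integral $\pi_!$ of $i_1^*\Th|_Y$ along the trivial bundle $Y \times S^q \to Y$ --- I would note that the restriction of the bundle Thom class $\Th$ over $Y$ makes $(E,i_1^*\Th)$ and $(\Ehat, \ihat^*\Th)$ a T-dual pair over $Y$, so the Gysin-ladder argument of the Proposition above (comparing the Gysin sequences of the fiberwise join $\rho$ and of $\pi$, using that the fiberwise integral of $\Th$ over the join is $\pm 1$) gives $\pi_!(i_1^*\Th|_Y) = \pm e(\Ehat) = \pm (e \otimes 1)$. This is exactly the map $\te$ up to sign.

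For the first component I would exploit that $P$ is the homotopy fiber of $g = e(- \ast -) \colon BG \times BG \to K(\Z,n+3)$, which factors through the cup-product map precomposed with $e \times e$ and is therefore canonically nullhomotopic on $\{*\} \times BG$ (cup product with $0$); this nullhomotopy produces the splitting of the fiber as $Y = BG \times K(\Z,n+2)$ in $(\ref{compare_eqn})$, with the $K(\Z,n+2)$-coordinate recording the loop in $\Omega K(\Z,n+3)$ by which the tautological trivialization of $e(E \ast_X \Ehat)$ deviates from the canonical one coming from $e(E) = 0$. Consequently, over $Y$, $i_1^*\Th|_Y = i_1^* \Th_{\mathrm{can}} + (\rho \circ i_1)^*(1 \otimes \iota)$, where $\Th_{\mathrm{can}}$ is the bundle Thom class corresponding, under Proposition \ref{thom_prop} and the torsor structure discussed after it, to the canonical trivialization of $e(E \ast_X \Ehat) = e(E)\cdot e(\Ehat)$. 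Since $\rho \circ i_1$ is the projection of the trivial bundle $Y \times S^q$, the correction term is $1 \otimes \iota \otimes 1$, contributing precisely the ``constant-maps'' component $1 \otimes \iota$; it then remains to check that $i_1^*\Th_{\mathrm{can}}$ has vanishing ``constant'' component, i.e. restricts to $0$ on the section of $Y \times S^q$ at the basepoint of $S^q$.

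This final step is where I expect the main difficulty. Since $\Th_{\mathrm{can}}$ is pulled back from the $BG$-factor, one may work over $BG$ with $E_0 = BG \times S^q$ and $Z_0 = E_0 \ast_{BG} \Ehat_{\mathrm{univ}}$; choosing the basepoint of $S^q$ to be a suspension point and writing $S^q = S^0 \ast S^{q-1}$ exhibits $Z_0$ as a fiberwise unreduced suspension $\Sigma_{BG} W$ with $W = (BG \times S^{q-1}) \ast_{BG} \Ehat_{\mathrm{univ}}$, under which $i_1$ followed by the basepoint section becomes one of the canonical sections $s_0, s_1$ of $\Sigma_{BG} W$. Identifying $\Th_{\mathrm{can}}$ with the image of the Thom class $\theta_W \in H^*(BG^{W \oplus 1})$ under the collapse $\Sigma_{BG} W \to BG^{W \oplus 1}$, its restriction along $s_1$ vanishes (that section is collapsed in $BG^{W \oplus 1}$) while its restriction along $s_0$ equals $e(W) = e(BG \times S^{q-1}) \cdot e(\Ehat_{\mathrm{univ}}) = 0$; either way the ``constant'' component is zero, as needed. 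Assembling the two components yields the fiber map $\te \cdot \mathrm{const}$ of (2), and its restriction along $\{*\} \times K(\Z,n+2)$ yields (1). The genuinely delicate inputs are the identification of $\Th_{\mathrm{can}}$ with this suspension Thom class and the careful bookkeeping of the canonical nullhomotopies that produce the splitting of the fiber.
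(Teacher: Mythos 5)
Your proposal is correct in substance, but it runs in the opposite direction from the paper's proof and uses different machinery at the key step. The paper proves (1) first and directly: over the fiber $K(\Z,n+2)$ of $P \to BG \times BG$ both bundles are trivial, the join is $S^{2q+1} \times K(\Z,n+2)$, and the fiber map is adjoint to $(\iota \circ i_1) + \mathrm{pr}_2$ with $i_1 \colon S^q \hookrightarrow S^{2q+1}$ nullhomotopic, so only the constant-maps part survives; it then treats (2) as a ``parameterized form'' of (1), identifying $J_b^*(E \ast_P \Ehat)$ with the fiberwise $(q+1)$-fold suspension $\Sigma^{q+1}_{X_b}\Ehat$ and reading off $\te$ from the fact that the Euler class is the restriction of the Thom class of a fiberwise suspension along its canonical section. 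You instead prove (2) by a K\"unneth decomposition of $H^{n+2}(Y \times S^q)$ and deduce (1) by restriction; your identification of the $H^{q+1}(Y)\otimes\sigma_q$ component via the Gysin-ladder Proposition ($\pi_![H] = \pm e(\Ehat)$) is a clean formal substitute for the paper's suspension argument, and your explicit torsor bookkeeping for the constant component (writing $\Th|_Y = \Th_{\mathrm{can}} + \rho^*(1\otimes\iota)$) makes visible something the paper leaves implicit in the phrase ``parameterized form of the first.'' The delicate points you flag — that the splitting of the fiber comes from the canonical nullhomotopy of $e(E)\cdot e(\Ehat)$ via $e(E)=0$, and that $\Th_{\mathrm{can}}$ agrees with the suspension Thom class so that its constant component vanishes — are real, but the paper relies on exactly the same identifications without comment (and any residual ambiguity lies in $H^{n+2}(BG)$, which is absorbed into the choice of splitting of the fiber and does not affect the connectivity statement of Corollary \ref{Cor unique}). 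Net effect: your route is more computational and arguably more systematic; the paper's is shorter and more geometric.
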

\begin{proof}
The fundamental class $\iota \colon S^{n + 2} \to K(\Z, n + 2)$ represents the bundle Thom class of the trivial sphere bundle over a point.  Thus the induced map of fibers in (\ref{diag PR}) is adjoint to the composite
\[
S^q \times K(\Z, n + 2) \xrightarrow{ i_1 \times 1 } S^{2q + 1} \times K(\Z, n + 2) \xrightarrow{ \iota \times 1} K(\Z, n + 2) \times K(\Z, n + 2) \xrightarrow{+} K(\Z, n + 2).
\]
This composite is homotopic to the projection to the second factor, which proves the first claim. 

The second claim is a parameterized form of the first.  The identity map of the space $P$ classifies a triple $(E, \Ehat, \Th)$ over $P$ of $S^q$-bundles $E$ and $\Ehat$, and a bundle Thom class $\Th \in H^{n+2}(E *_P \Ehat)$.  In these terms, $f$ carries this to the map $f: P \to R$ representing the pair $(E, i_1^*(\Th))$, where $i_1: E \to E *_P \Ehat$ is the natural inclusion.

Write $X_b \cong BG \times K(\Z, n + 2)$ for the fiber over $b$, and $J_b$ for the fiber inclusion $J_b: X_b \to P$ in the top row of (\ref{compare_eqn}).  Then the composition $f \circ J_b$ represents the pair $(J_b^* E, i_1^*J_b^*(\Th))$ over $X_b$.  However, $J_b^*E$ is trivial, since its classifying map factors through $\{ b\}$.  Therefore, the fiberwise join $J_b^*(E \ast_P \Ehat) = \Sigma^{q+1}_{X_b} \Ehat$ is the $(q+1)\st$ fiberwise suspension of $\Ehat$. Then the pullback of the bundle Thom class $i_1^*J_b^*(\Th)$ is the composite
$$S^q \times BG \times K(\Z, n + 2) = S^q \times X_b \cong J_b^* E \xrightarrow{ i_1 } \Sigma^{q+1}_{X_b} \Ehat \xrightarrow{ J_b } E \ast_P \Ehat \xrightarrow{ \Th } K(\Z, n+2)\;.$$
Now, the Euler class of the $S^{n+2}$-bundle $\Sigma^{q+1}_{X_b} \Ehat$ is precisely the restriction of the bundle Thom class $J_b^* \Th$ along the zero section.  Restricting along $i_1$ gives an $S^q$-parameterized form of this fact: $i_1 \circ J_b \circ \Th: S^q \times X_b \to K(\Z, n+2)$, is the Euler class of $\Sigma^{q+1}_{X_b} \Ehat$ when restricted to any point in $S^q$.  Via adjunction, this is precisely $\te$ in the $BG$ variable.

\end{proof}

Now the following result provides special cases when T-duals exist and are unique.

\begin{cor}
\label{Cor unique}
If the Euler class map $e: BG \to K(\Z, q+1)$ is $m$-connected, so too is the comparison map $f: P \to R$.  Thus, over complexes of dimension less than $m$, there exists a unique T-dual $(\Ehat, \Hhat)$ for any $(E, H)$.  In dimension $m$, such duals exist, but are not necessarily unique.

\end{cor}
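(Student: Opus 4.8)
The plan is to read off the connectivity of $f \colon P_n(G) \to R_n(G)$ from that of $e$ by comparing the two fiber sequences over $BG$ appearing in \eqref{compare_eqn}, and then to convert this into the statement about T-dual pairs by obstruction theory together with the classification results of Propositions \ref{prop:classify_P} and \ref{prop:classify_R}.

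First I would identify the bottom fiber of \eqref{compare_eqn} concretely. Since $K(\Z, n+2)$ is an infinite loop space, the evaluation fibration $\Map(S^q, K(\Z, n+2)) \to K(\Z, n+2)$ splits, so there is a homotopy equivalence $\Map(S^q, K(\Z, n+2)) \simeq K(\Z, n+2) \times \Omega^q K(\Z, n+2) = K(\Z, n+2) \times K(\Z, q+1)$, where the last identification uses $n + 2 - q = q + 1$ (as $n = 2q - 1$). Under this splitting, the inclusion of the constant maps is the inclusion of the first factor, and the inclusion $\Omega^q K(\Z, n+2) \hookrightarrow \Map(S^q, K(\Z, n+2))$ of based maps is the inclusion of the second factor. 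Feeding this into Lemma \ref{fiber_lem}(2), the fiber map $BG \times K(\Z, n+2) \to \Map(S^q, K(\Z, n+2))$ in \eqref{compare_eqn}, which that lemma identifies up to homotopy with $(b,k) \mapsto \te(b)\cdot \mathrm{const}_k$, becomes under the splitting the map $(b,k) \mapsto (k, e(b))$ --- that is, $e \times \mathrm{id}_{K(\Z,n+2)}$ followed by the interchange of the two factors. Because the homotopy fiber of $e \times \mathrm{id}$ agrees with that of $e$, this fiber map is $m$-connected whenever $e$ is.

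Next I would compare homotopy groups. The square \eqref{compare_eqn} is a map of fibrations over $BG$ inducing the identity on the base, so a five-lemma argument applied degree by degree to the two long exact sequences of homotopy groups shows that $f$ is $m$-connected, i.e.\ its homotopy fiber $F$ is $(m-1)$-connected. Standard obstruction theory then applies: the obstructions to lifting a map $X \to R_n(G)$ through $f$ lie in the groups $H^{j+1}(X; \pi_j F)$, and the obstructions to uniqueness of such a lift up to vertical homotopy lie in $H^{j}(X; \pi_j F)$; all of these vanish for $j \le m-1$ since $F$ is $(m-1)$-connected. Hence $f_* \colon [X, P_n(G)] \to [X, R_n(G)]$ is bijective when $\dim X \le m-1$ and surjective when $\dim X = m$. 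Combining this with the identification of $[X, P_n(G)]$ with the set of triples $(E, \Ehat, \Th)$ (Proposition \ref{prop:classify_P}) and of $[X, R_n(G)]$ with the set of pairs $(E, H)$ (Proposition \ref{prop:classify_R}), under which $f$ represents the forgetful map $(E, \Ehat, \Th) \mapsto (E, i^*\Th)$ (see \S\ref{Sec_Compare}), gives the claimed existence and uniqueness of T-duals over complexes of dimension $< m$, and existence (but not necessarily uniqueness) in dimension $m$.

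The step I expect to be the main obstacle is the identification of the fiber map of \eqref{compare_eqn} under the splitting of $\Map(S^q, K(\Z, n+2))$ --- in particular checking that $\te \cdot \mathrm{const}$ really does correspond to $e \times \mathrm{id}$ up to the interchange of factors, which requires tracking the $H$-space structure on the mapping space and the fact (from the proof of Lemma \ref{fiber_lem}) that $\te$ takes values in the summand of based maps. The remaining five-lemma comparison and the obstruction-theoretic bookkeeping are routine, though if $G$ is disconnected one should phrase the obstruction theory with local coefficients --- the vanishing of $\pi_j F$ for $j\le m-1$ still kills the obstructions.
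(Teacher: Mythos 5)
Your proposal is correct and follows essentially the same route as the paper: the paper's own proof simply cites Lemma \ref{fiber_lem} together with the H-space splitting $K(\Z, n+2) \times \Omega^q K(\Z, n+2) \simeq \Map(S^q, K(\Z, n+2))$, which is exactly your identification of the fiber map with $e \times \mathrm{id}$ (up to the swap). The five-lemma comparison of the fibrations over $BG$ and the obstruction-theoretic translation into existence/uniqueness of lifts are the details the paper leaves implicit, and you have filled them in correctly.
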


\begin{proof}

The result then follows from Lemma \ref{fiber_lem} and the fact that for an H-space such as $K(\Z, n+2)$, the product of the constant maps and based maps gives an equivalence $K(\Z, n+2) \times \Omega^q K(\Z, n+2) \simeq \Map(S^q, K(\Z, n+2))$.

\end{proof}

\begin{exmp}[Principal circle bundles]
When $n = q = 1$ and $G = U(1)$, the Euler class $e: BU(1) \to K(\Z, 2)$ is a weak homotopy equivalence; therefore the map $f$ is as well.  This recovers the result of Bunke-Schick \cite{BS} that in the case of circle bundles and $U(1)$-gerbes, T-dual pairs exist and are unique up to equivalence.
\end{exmp}

\begin{exmp}
[Principal $SU(2)$-bundles]
Suppose that $n=5$, i.e. $q = 3$, and $G = SU(2)$, and consider pairs $(E, H)$ consisting of a principal $SU(2)$-bundle $\pi \colon E \to X$ and a 5-gerbe $H \colon E \to K(\Z, 7)$ on $E$.  The Euler class $e: BSU(2) \to K(\Z, 4)$ is 5-connected, and so the same holds for $f$.  This recovers the result of Bouwknegt-Evslin-Mathai \cite{BEM2} that when the base $X$ has dim $\leq 4$, the T-dual of $(E, H)$ exists and is unique.

We can, furthermore, lift this statement to one of our newly constructed theories.
Still when $n = 5$, the spectrum $\ka_5 = K^{(4)}(ku)$ is the fourth iterated algebraic $K$-theory of $ku$ and $\KA_5 = K^{(4)}(ku)[\beta_{5}^{-1}]$ is the localization at the higher Bott element $\beta_5$ of degree 6.  The map 
\[
\tau_5 \colon K(\Z, 7) \to B \GL_1 K^{(4)}(ku)[\beta_{5}^{-1}] = B \GL_1 \KA_5
\]
defines a twisting of $\KA_5$ by 5-gerbes and we now consider the T-duality isomorphism.  Since the total space of the universal principal $SU(2)$-bundle is contractible, the vertical tangent bundle of $E$ is trivializable.  Hence the associated twist $\tau_E$ of $\KA_5$ is trivial and the T-duality isomorphism takes the form
\[
T= \phat_! \circ \Lambda \circ p^* \colon \KA_5^*(E, H) \to \KA_5^{*-3}(\Ehat, \Hhat)\;.
\]
By Corollary \ref{chern_corollary}, the Chern character throws this T-duality isomorphism onto a T-duality isomorphism 
\[
T \colon H\Q^*(E; H)[\beta_{5}^{-1}] \xrightarrow{\cong} H\Q^{* - 3}(\Ehat; \Hhat)[\beta_{5}^{-1}]
\]
in ordinary cohomology.  This recovers the T-duality isomorphism for principal $SU(2)$-bundles studied by Bouwknegt-Evslin-Mathai (Theorem 5.2 of \cite{BEM2}).
\end{exmp}

\begin{exmp}[Non-principal $SU(2)$-bundles] 

We again suppose that $n=5$, i.e. $q = 3$, and that $G = SO(4)$ acts on $S^3$ via the inclusion
\[
SO(4) = \mathrm{Isom}^{+}(SU(2)) \to \mathrm{Homeo}^{+}(SU(2)) \cong \mathrm{Homeo}^{+}(S^3).
\]
In this case, the map on fibers in the map of fiber sequences comparing $P$ and $R$ (\ref{compare_eqn}) is
\[
\widetilde{e} \cdot \mathrm{const} \colon BSO(4) \times K(\Z, 7) \to \Map(S^3, K(\Z, 7)) \simeq K(\Z, 4) \times K(\Z, 7).
\]
The effect on $\pi_4$ is the projection $+ \colon \Z \oplus \Z \to \Z$.  This means that over $S^4$, each pair $(E, H)$ of an $S^3$ bundle with structure group $SO(4)$ and a $5$-gerbe $H$ has an infinite rank one lattice of possible T-duals $(\Ehat, \Hhat)$.  In particular, spherical T-duals are not unique in this case.  This analysis recovers the construction of spherical T-duals given by Bouwknegt-Evslin-Mathai in the case of $M = S^4$  \cite{BEM3}.  They construct an infinite family of T-duals over any compact, oriented, simply-connected, 4 dimensional manifold $M$, but in this case the classification of all possible T-dual pairs is a more intricate problem.  Given a T-dual pair $(E, H)$ and $(\Ehat, \Hhat)$, we have a T-duality isomorphism in $\KA_5$-cohomology and Bott-inverted rational cohomology as in the previous example.
\end{exmp}

\begin{exmp} 
[Non-orientable bundles]
In \cite{Ba14} Baraglia extended topological T-duality, as an isomorphism in twisted K-theory,
 to the case of general circle bundles. 
Via a geometric approach, he proves existence and uniqueness of T-duals. This requires  considering 
a twist using real line bundles, i.e. arising from the factor $K(\Z/2, 1) \subset B\GL_1(K)$, in addition to the 
usual determinantal factor $K(\Z, 3)$. Specifically: the map $O(2) \to \Homeo(S^1)$ is a homotopy equivalence (this is a theorem of Kneser \cite{kneser}), so a general $S^1$-bundle fails to be equivalent to a principal bundle only on the basis of orientability.

This was put in the homotopy-theoretic 
 framework of Bunke-Schick by Mathai and Rosenberg \cite[Theorem 2.2]{MR14}, who constructed a classifying space for T-dual pairs in the not necessarily oriented setting.
When we defined the classifying space $P_n(G)$, we needed
our sphere bundles to be oriented to define the Euler class.  We may extend Mathai-Rosenberg's construction of the classifying space for T-dual pairs to the case of higher dimensional sphere bundles by considering not-necessarily oriented $S^q$-bundles $E$ and $\Ehat$ with structure group $G$ over a space $X$.  Let $\widetilde{X}$ be the double cover of $X$ which orients $E \ast_{X} \Ehat$, and $Q$ the oriented $S^{n+1}$-bundle over $\widetilde{X}$ to which $E \ast_{X} \Ehat$ pulls back.  One may show that triples $(E, \Ehat, \Th)$, where $\Th \in H^{n+2}(Q)$, are classified by a space $\widetilde{P}_n(G)$, which is defined as the homotopy fiber of the composite
$$\xymatrix@1{BG \times BG \ar[r]^-{B\join} & B\Homeo(S^q * S^q) \ar[r]^-{(w_1, e)} & E\Z / 2 \times_{\Z / 2} K(\Z, n+3).}$$
Here $\Z/2$ acts on $K(\Z, n+3)$ by negating the fundamental class.  The Borel construction fibers over $K(\Z/2, 1)$, with fiber $K(\Z, n+3)$; $(w_1, e)$ is the data of the first Stiefel-Whitney class of the $S^q * S^q$ bundle $E \ast_{X} \Ehat$ (i.e., $w_1$ mapping into $K(\Z/2, 1)$), and $e$ is the Euler class of the oriented cover $Q$.

The construction of the classifying space $R_n(G)$ for pairs $(E, H)$ is applicable in the case where $E$ is a not necessarily oriented sphere bundle with structure group $G$.  As in the oriented case, there is a comparison map $f \colon \widetilde{P}_n(G) \to R_n(G)$, and we may study its connectivity to determine the existence and uniqueness of T-dual pairs in this setting.  In the case where $n = q = 1$ and $G = O(2)$, the map $f$ is an equivalence, so we recover Baraglia's theorem on the existence and uniqueness of T-duality for non-principal circle bundles.

Our proof of the T-duality isomorphism applies to non-orientable bundles: looking back to the definition of what it means for $(E, H)$ and $(\Ehat, \Hhat)$ to be T-dual, we allow $E$ and $\Ehat$ to be arbitrary smooth bundles with fiber $S^q$.  This is enough to prove the T-duality isomorphism; indeed, the orientation twist $\tau_E$ precisely carries the correction term for non-orientable sphere bundles.

Lastly, we recall Smale's conjecture that the inclusion $O(q+1) \to \Diff(S^q)$ is a homotopy equivalence; this has been verified by Smale for $q=2$ and Hatcher for $q=3$ \cite{smale, hatcher}.  These results allow us to extend to dimensions two and three the results described above for $S^1$-bundles, yielding a similar construction and analysis of the classifying space $\widetilde{P}_n(G)$ of pairs $(E, H)$, where $E$ is a not-necessarily orientable $S^q$-bundle with structure group $G$, and $H \in H^{n+2}(E)$.

\end{exmp}

\section{Higher categories, $n$-vector spaces and iterated algebraic K-theory}
\label{Sec ncat} 

In this section we return to the question raised in the introduction on the relationship between the iterated algebraic K-theory $\ka_n = K^{(n - 1)}(ku)$, the Bott-inverted iterated algebraic $K$-theory $\KA_n = K^{(n - 1)}(ku)[\beta_{n}^{-1}]$, and
$n$-vector spaces.  The discussion is entirely speculative, and is independent of the results proved in the paper.  Our aim is to sketch a  conceptual framework which might be useful for analyzing the geometric content of T-duality for iterated algebraic $K$-theory.  Making this material rigorous would, at a minimum, require a good theory of weighted colimits for enriched $\infty$-categories.  Also, for the sake of clarity, we ignore the size issues that arise.

\medskip
We start with the concept that one should be able to associate an object like a vector bundle to a principal 
$K(\Z, n)$-bundle.  We call such an object a (rank one)
 $n$-vector bundle. Often the structure group $K(\Z, n)$ is written $B^{n-1}U(1)$ to highlight the categorical 
level: the notion of an $n$-vector bundle is an $(n - 1)$-fold categorification of the notion of a vector bundle. The association of an $n$-vector bundle to a principal $K(\Z, n)$-bundle works similarly to the classical 
theory, via an action of the group $K(\Z, n)$ on some structure. We take the point of view that the canonical object to act on is 
the linear $(n - 1)$-category $(n - 1)\Vect_{\C}$ of $(n - 1)$-vector spaces.  The iterative definition of $n$-vector space given below is arranged so that the $(n - 1)$-category $(n - 1)\Vect_{\C}$ is the basic example of a rank one $n$-vector space.

\medskip 
For small values of $n$ this is completely rigorous. When $n=1$ one has the 
canonical action of $K(\Z, 1) = U(1)$ on the complex vector space $\mathbb{C}$.  When $n=2$ 
the group $K(\Z, 2) = BU(1)$ is equivalent to the Picard groupoid of the category of vector 
spaces, i.e. to the symmetric monoidal groupoid of 1-dimensional complex vector spaces under the tensor product. As such, $BU(1)$ acts on the category 
$\Vect_{\C}$ of finite rank complex vector spaces via the tensor product. 
So to a $BU(1)$-principal bundle is canonically associated a bundle with fiber $\Vect_{\C}$.  The 2-category $2\Vect_{\C}$ of 2-vector spaces is defined to be the 2-category of finite rank $\Vect_{\C}$-modules in $\C$-linear categories, as in Kapranov-Voevodsky \cite{KV}.  Since $\Vect_{\C}$ is 
a rank one object in $2\Vect_{\C}$, the bundle associated to a principal $K(\Z, 2)$-bundle is a line 2-vector bundle. 

\medskip
The first few values of $n$ have explicit applications \cite{KV, SSS1, FHLT}.  Intuitively, the pattern continues to higher $n$, but the situation becomes
successively more complicated and less clear as $n$ grows. 

\medskip
We will sketch an approach to $n$-vector spaces using enriched higher categories.  If $\cV$ is a monoidal $\infty$-category, let us write $\iCat^{\cV}$ for the $(\infty, 1)$-category of $\cV$-enriched $\infty$-categories and $\cV$-functors.  Although our arguments are merely heuristics, much of what we say can be implemented in the model developed by Gepner-Haugseng \cite{GH}.  For example, $\iCat^{\Sp}$ denotes the $\infty$-category of $\infty$-categories enriched in the monoidal $\infty$-category $\Sp$ of spectra.  Since $\Sp$ carries the additional structure of a symmetric monoidal $\infty$-category, the $\infty$-category $\iCat^{\Sp}$ is also symmetric monoidal.  We can iterate this procedure further.  Let
\[
\Cat^{\Sp}_{(\infty, n)} = \iCat^{\iCat^{\phantom{X} \dotsm\,  \iCat^{\Sp}}}
\]
be the $\infty$-category of $\infty$-categories enriched in $\Cat^{\Sp}_{(\infty, n - 1)}$, where $\Cat^{\Sp}_{(\infty, 1)} = \iCat^{\Sp}$ \cite[5.7.13]{GH}.  We think of objects of $\Cat^{\Sp}_{(\infty, n)}$ as $(\infty, n)$-categories where the collection of $n$-morphisms between a pair of $(n - 1)$-morphisms forms a spectrum.  

\medskip

There is not currently a well-developed theory of weighted limits and weighted colimits in enriched $\infty$-categories, but we proceed as if there were, using the corresponding notions from ordinary enriched category theory as a guide for our intuition.  We write $\St^{(n)}$ for the full subcategory of $\Cat^{\Sp}_{(\infty, n)}$ consisting of those objects admitting all $\Cat^{\Sp}_{(\infty, n - 1)}$-weighted colimits.  Assuming that this is a sensible notion, there is a localization adjunction
\begin{equation}\label{eq:L^n}
\xymatrix{
\Cat^{\Sp}_{(\infty, n)} \ar[r]<.5ex>^-{L^{(n)}} & \St^{(n)} \ar[l]<.5ex>
}
\end{equation}
where the left adjoint freely adds all such colimits and the right adjoint is the inclusion.  The case $n = 1$ was studied by Blumberg-Gepner-Tabuada \cite{BGT} and takes the form
\[
\xymatrix{
\iCat^{\Sp} \ar[r]<.5ex>^-{L^{(1)}} & \St \ar[l]<.5ex>,
}
\]
where $\St = \St^{(1)}$ is the full subcategory of $\iCat^{\Sp}$ consisting of those spectrally enriched $\infty$-categories admitting small $\Sp$-weighted colimits, i.e. the cocomplete stable $\infty$-categories.  This explains the notation $\St^{(n)}$.  

\medskip
Suppose that $R$ is an $E_{\infty}$ ring spectrum.  There is an object $\mathfrak{b} R$ of $\iCat^{\Sp}$ with a single object $\bullet_{0}$ whose endomorphism spectrum is $\Hom(\bullet_{0}, \bullet_{0}) = R$.  
The image of the object $\mathfrak{b} R$ under the localization functor $L^{(1)}$ is the presentable stable $\infty$-category
\[
L^{(1)} \mathfrak{b} R = \Mod_{R}
\]
of $R$-modules.  The functor $L^{(1)}$ admits a symmetric monoidal structure and we write $(\Mod_{R}, \otimes)$ for the induced symmetric monoidal structure on the image of $\mathfrak{b} R$.  For the purposes of defining $K$-theory, we restrict to the full subcategory $(\Mod_{R}, \otimes)^{\circ}$ of dualizable $R$-modules under the symmetric monoidal structure.  It is here that we use the commutativity assumption.  We could equivalently restrict to the subcategory of perfect $R$-modules, meaning the objects which span the thick subcategory generated by $R$ after passage to the homotopy category.  It is the dualizability notion that we will generalize below, so we concentrate on that here.  

The $(\infty, 1)$-category $(\Mod_{R}, \otimes)^{\circ}$ is pointed and admits finite colimits, so we may use the $\infty$-categorical version of Waldhausen's $K$-theory construction \cite{BGT, Bar} to define its algebraic $K$-theory:
\[
K(R) = K(\Mod_{R}) := K((L^{(1)} \mathfrak{b} R, \otimes)^{\circ}).
\]
Notice that ``$K(R)$'' and ``$K(\Mod_{R})$'' are synonyms.  Soon we will consider the $K$-theory of other module categories, but in that case we will never write the $K$-theory in terms of the underlying ``ring'', only in terms of the category used to construct the $K$-theory.

\medskip
When $n = 2$, adjunction \eqref{eq:L^n} takes the form 
\[
\xymatrix{
\Cat^{\Sp}_{(\infty, 2)} = \iCat^{\iCat^{\Sp}} \ar[r]<.5ex>^-{L^{(2)}} & \St^{(2)} \ar[l]<.5ex>
}
\]
where the left adjoint $L^{(2)}$ freely adjoins all $\iCat^{\Sp}$-weighted colimits.  
There is an object $\mathfrak{b}^2 R$ of $\iCat^{\iCat^{\Sp}}$ which has a single object $\bullet_0$ and a single $1$-morphism $\bullet_1$ whose endomorphism spectrum is $\Hom(\bullet_{1}, \bullet_{1}) = R$.  The image of $\mathfrak{b}^2 R$ under the localization functor $L^{(2)}$ is the $(\infty, 2)$-category
\[
L^{(2)} \mathfrak{b}^2 R = \Mod_{\Mod_{R}}
\]
of $\Mod_{R}$-module categories.  An object of $\Mod_{\Mod_{R}}$ may be described as a presentable stable $\infty$-category $\cC$ equipped with a suitable action $\Mod_{R} \otimes \cC \arr \cC$ of $\Mod_{R}$, defined with respect to the symmetric monoidal structure $\otimes$ on stable presentable $\infty$-categories.  When considering dualizable $\Mod_{R}$-modules, as we will do below, it is natural to restrict to the action of the subcategory $\Mod_{R}^{\circ}$ of perfect $R$-modules.  If $\cA$ and $\cB$ are $\Mod_{R}$-module categories, the $(\infty, 1)$-category of morphisms from $\cA$ to $\cB$ is the category of exact $R$-linear functors 
\[
\Mod_{\Mod_{R}}(\cA, \cB) \simeq \Fun_{R}^{\ex}(\cA, \cB).
\]

So far, the constructions we have made would still make sense if $R$ were an $E_{2}$ ring spectrum, but as in the $n = 1$ case, we now restrict to the full subcategory spanned by the dualizable objects.  The spectral $(\infty, 2)$-category $\Mod_{\Mod_{R}}$ inherits a symmetric monoidal structure by virtue of the symmetric monoidal structure on the functor $L^{(2)}$.  We write $(L^{(2)}\mathfrak{b}^2 R, \otimes)^{\circ} \to L^{(2)}\mathfrak{b}^2 R$ for the inclusion of the subcategory spanned by the fully dualizable objects, as defined by Lurie \cite[2.3.19]{lurie_cobordism}.  This has the effect of discarding 1-morphisms that do not admit adjoints in the underlying homotopy bicategory, then discarding objects that are not dualizable in the underlying symmetric monoidal homotopy category.  We define the $K$-theory of $\Mod_{R}$-module categories to be the Waldhausen $K$-theory of the $(\infty, 1)$-category truncation of the $(\infty, 2)$-category of the fully dualizable objects of $\Mod_{\Mod_{R}}$
\[
K(\Mod_{\Mod_{R}}) := K(\iota_1(L^{(2)} \mathfrak{b}^2 R, \otimes)^{\circ}).
\]
In the case of symmetric monoidal bicategories with no higher morphisms, the analogous $K$-theory functor was constructed explicitly by Osorno \cite{Osorno}.  

When $R = Hk$ is the Eilenberg-MacLane spectrum associated to a field $k$, the $(\infty, 2)$-category $\Mod_{\Mod_{H k}}$ is equivalent to the $(\infty, 2)$-category of $k$-linear dg-categories.  The fully dualizable objects of $\Mod_{\Mod_{H k}}$ are precisely the smooth and proper dg-categories \cite{TVa, AG14}, and $\pi_0 K(\Mod_{\Mod_{Hk}})$ is isomorphic to the secondary $K$-theory $K_0^{\{2\}}(k)$ of the field $k$ defined by To\"en \cite{toen, tabuada}.  We also expect that $K(\Mod_{\Mod_{H k}})$ is related to the $K$-theory of varieties studied by Campbell-Wolfson-Zakharevich \cite{JAC, CWZ}.

\begin{rem}\label{2vect_remark}
A slight variation allows us to consider 2-vector spaces instead of dg-categories.  We replace $\Mod_{R}$ with the category $\Vect_{\C}$ of finite dimensional complex vector spaces, considered as an $(\infty, 1)$-category enriched in spectra by delooping the usual enrichment in topological abelian groups.  In this case, $\Mod_{\Vect_{\C}}$ is the $(\infty, 2)$-category of $\Vect_{\C}$-module categories.  In particular, the 2-category $2\Vect_{\C}$ of 2-vector spaces, as defined by Kapranov-Voevodsky \cite{KV}, embeds in the subcategory $(\Mod_{\Vect_{\C}}, \otimes)^{\circ}$ of dualizable objects, and $K(\Mod_{\Vect_{\C}})$ is equivalent to the bicategorical $K$-theory of 2-vector spaces $K(2\Vect_{\C})$ studied by Baas-Dundas-Rognes \cite{baas-dundas-rognes, Osorno}.  
\end{rem}

Instead of applying $L^{(2)}$ to $\mathfrak{b}^2 R$ , we could add colimits in a two step process.  Write $\iCat^{\St}$ for the $\infty$-category of $\infty$-categories enriched in stable $\infty$-categories.  There is a localization adjunction
\[
\xymatrix{
\iCat^{\iCat^{\Sp}} \ar[r]<.5ex>^-{\mathfrak{b} L^{(1)}} & \iCat^{\St} \ar[l]<.5ex>
}
\]
where the left adjoint applies $L^{(1)}$ to the hom objects.  In other words, we've only added colimits at the top level.  In the case of $\mathfrak{b}^2 R$, we get the category
\[
(\mathfrak{b} L^{(1)}) (\mathfrak{b}^2 R) = \mathfrak{b} \Mod_{R}
\]
with a single $0$-cell whose endomorphism object is the stable $\infty$-category of $R$-modules.  Notice that composition in this category uses the symmetric monoidal structure $\otimes$ on $\Mod_{R}$.  Next, we apply $K(-)$, taking care to first restrict to the dualizable objects.  Formally, we are using the fact that $\mathfrak{b} R$ is a commutative monoid in $\iCat^{\Sp}$ and that the functors $L^{(1)}$ and $K$ induce functors on categories enriched in commutative monoid objects.  We have now associated to $\mathfrak{b}^2 R$ the spectral category 
\[
\mathfrak{b}K(R) = (\mathfrak{b}K)(\iota_{1} ((\mathfrak{b}L^{(1)}) \mathfrak{b}^2 R, \otimes)^{\circ})
\]
with one object and $K(R)$ as its endomorphism spectrum.  Here $\iota_1$ denotes the truncation functor $\mathfrak{b}\iota_0$ taking an $(\infty, 2)$-category to the underlying $(\infty, 1)$-category that removes the non-invertible 2-morphisms.
After applying the same procedure one categorical level down by adding all spectral colimits, restricting to dualizable objects and taking $K$-theory,  we get the twice-iterated algebraic $K$-theory spectrum
\[
K(K(R)) = K((L^{(1)} \mathfrak{b} K(R), \otimes)^{\circ}).
\]

\medskip

The $K$-theory of $\Mod_{R}$-modules $K(\Mod_{\Mod_{R}})$ and the iterated $K$-theory $K(K(R))$ both map to the spectral enhancement $K^{\{2\}}(R)$ of To\"en's secondary $K$-theory defined by Hoyois-Scherotzke-Sibilla \cite{HSS}, as we now explain.  They consider a variant of the Blumberg-Gepner-Tabuada category of noncommutative motives \cite{BGT} that is the recipient of the universal additive invariant of stable $\Mod_{R}^{\circ}$-module categories.  More precisely, there is a functor
\[
\cU \colon \langle\text{$\Mod_{R}^{\circ}$-modules in $\Cat_{\infty}^{\mathrm{perf}}$}\rangle \arr \Mot_{R} 
\]
from the $\infty$-category of small, stable, idempotent complete $\Mod_{R}^{\circ}$-module categories to the $\infty$-category of $R$-linear motives that preserves filtered colimits, preserves zero objects, sends split exact sequences to cofiber sequences, and is universal among functors with these three properties.  The $(\infty, 1)$-category $\Mot_{R}$ and the functor $\cU$ admit compatible symmetric monoidal structures, and we define the $K$-theory of $\Mot_{R}$ to be the Waldhausen $K$-theory of the full subcategory of $\Mot_{R}$ generated under finite colimits and retracts by the images of dualizable objects under $\cU$:
\[
K(\Mot_{R}) := K((\Mot_{R}, \otimes)^{\circ}).
\]
$K$-theory becomes corepresentable after passing to the $\infty$-category $\Mot_{R}$, meaning that if $\cA$ and $\cB$ are $\Mod_{R}^{\circ}$-module categories, and $\cA$ is compact, then there is a natural equivalence of spectra
\[
\Mot_{R}(\cU(\cA), \cU(\cB)) \simeq K(\Fun^{\ex}_{R}(\cA, \cB)).
\]
When $\cA$ and $\cB$ are both fully dualizable as $\Mod_{R}$-module categories, which in particular implies that they are compact \cite[4.19]{HSS}, then their associated motives are dualizable in $\Mot_{R}$.  Applying the natural transformation $\iota_0 \arr K$ from the groupoid core functor to Waldhausen $K$-theory to the spectrally enriched $(\infty, 1)$-categories $\Fun^{\ex}_{R}(\cA, \cB)$ gives a map
\[
\iota_1\Mod_{\Mod_{R}}(\cA, \cB) \simeq \iota_{0}\Fun^{\ex}_{R}(\cA, \cB) \arr K(\Fun^{\ex}_{R}(\cA, \cB)) \simeq \Mot_{R}(\cU(\cA), \cU(\cB))
\]
which assembles into a symmetric monoidal functor of $(\infty, 1)$-categories
\[
\iota_{1} (\Mod_{\Mod_{R}})^{\circ} \arr \Mot_{R}^{\circ}.
\]
Applying Waldhausen $K$-theory gives a map of $E_{\infty}$ ring spectra
\begin{equation}\label{map1}
K(\Mod_{\Mod_{R}}) \arr K(\Mot_{R}).
\end{equation}

On the other hand, the motive associated to the category $\Mod_{R}^{\circ}$ of perfect $R$-modules is the unit of the symmetric monoidal category $\Mot_{R}$ of $R$-linear motives, and so the thick subcategory that it generates is equivalent to the category of modules over the endomorphism ring 
\[
\Mot_{R}(\cU(\Mod_{R}^{\circ}), \cU(\Mod_{R}^{\circ})) \simeq K(\Mod_{R}^{\circ}) = K(R).
\]
The inclusion of this thick subcategory induces a map of $E_{\infty}$ ring spectra
\begin{equation}\label{map2}
K(K(R)) \arr K(\Mot_{R}),
\end{equation}
as observed by Hoyois-Scherotzke-Sibilla \cite[6.23]{HSS}.

\begin{question}\label{image_compare_question}
To what extent do the images of the maps \eqref{map1} and \eqref{map2} coincide in $K(\Mot_{R})$?
\end{question}

When $R = Hk$ is the Eilenberg-MacLane spectrum associated to a field $k$, the work of Baas-Dundas-Richter-Rognes \cite{baas-dundas-richter-rognes} implies that $K(K(k))$ is equivalent to the $K$-theory of the full subcategory of $\Mod_{\Mod_{Hk}}$ spanned by the free 2-vector spaces of finite rank.  Equivalently, these are the dg-categories with a single object and the free algebras $k^n$ as the ring of morphisms.  Similarly, if we replace $\Mod_{R}$ with $\Vect_{\C}$, as in Remark \ref{2vect_remark}, then $K(ku)$ is equivalent to the $K$-theory $K(2\Vect_{\C}) \simeq K(\Mod_{\Vect_{\C}})$ of 2-vector spaces.

\medskip

We would like to ask a similar question about the iterated algebraic $K$-theory spectrum $K^{(n)}(R)$ for higher values of $n$.  Let $\mathfrak{b}^n R \in \Cat^{\Sp}_{(\infty, n)}$ be the $(\infty, n)$-category enriched in spectra with a single $k$-morphism for $0 \leq k \leq n - 1$ and for which the endomorphism spectrum of the $(n - 1)$-morphism is the ring $R$.  Let
\[
\Mod^{(n)}_{R} = L^{(n)} \mathfrak{b}^n R = \Mod_{\Mod_{ \dotsm \Mod_{R} } }
\]
denote the image of $\mathfrak{b}^n R$ under the localization functor $L^{(n)}$.  The fully dualizable part
\[
\Mod^{\circ}_{\Mod_{ \dotsm \Mod_{R}}} = (\Mod^{(n)}_{R}, \otimes)^{\circ}
\]
of the symmetric monoidal $(\infty, n)$-category $\Mod^{(n)}_{R}$ is our higher categorical analog of the category of perfect $R$-modules, and in the case of $R = Hk$ we consider this to be a reasonable definition of the $(\infty, n)$-category of $n$-chain complexes over $k$.  Similarly, we define the $(\infty, n)$-category of complex $n$-vector spaces to be the symmetric monoidal $(\infty, n)$-category
\[
n\Vect_{\C} = (L^{(n)} \mathfrak{b}^{n - 1} \Vect_{\C}, \otimes)^{\circ} = \Mod^{\circ}_{\Mod_{ \dotsm \Vect_{\C} } }.
\]

\begin{rem}
We do not know how to construct the appropriate analog of the category of non-commutative $R$-local motives 
$\Mot_{R}$ that would accept maps from the spectra $K^{(n)}(R)$ and
 $K((\Mod^{(n)}_{R}, \otimes)^{\circ})$, and we do not know how to ask the analog of Question \ref{image_compare_question} when $n > 2$ (compare with \cite[6.22]{HSS}).  Our results on T-duality for $\KA_n$-algebras suggest that a good understanding of the relationship between $K^{(n)}(R)$ and $K((\Mod^{(n)}_{R}, \otimes)^{\circ})$ would connect $T$-duality for $S^{q}$-bundles and the theory of $n$-vector spaces for $n = 2q + 1$.
\end{rem}

\bibliography{biblio}

\end{document}